\documentclass[12pt]{amsart}
\usepackage{amsmath, amsthm, amssymb, amscd, mathptmx}
\usepackage{amsfonts}
\usepackage[all]{xy}\CompileMatrices\SelectTips{cm}{12}

\theoremstyle{plain}
\newtheorem{Thm}{\sc Theorem}[section]
\newtheorem{Theorem}[Thm]{\sc Theorem}
\newtheorem{Corollary}[Thm]{\sc Corollary}
\newtheorem{Proposition}[Thm]{\sc Proposition}
\newtheorem{Lemma}[Thm]{\sc Lemma}

\theoremstyle{definition}
\newtheorem{Definition}[Thm]{Definition}

\theoremstyle{remark}
\newtheorem{Remark}[Thm]{Remark}
\newtheorem{Remarks}[Thm]{Remarks}
\newtheorem{Example}[Thm]{Example}
\newtheorem*{Example*}{Example}
\newtheorem*{Remark*}{Remark}


\newcommand{\cA}{{\mathcal A}}

\newcommand{\cM}{{\mathcal M}}

\newcommand{\cO}{{\mathcal O}}

\newcommand{\cX}{{\mathcal X}}
\newcommand{\cY}{{\mathcal Y}}

\renewcommand{\AA}{{\mathbb A}}

\newcommand{\GG}{{\mathbb G}}

\newcommand{\PP}{{\mathbb P}}
\newcommand{\QQ}{{\mathbb Q}}

\newcommand{\VV}{{\mathbb V}}
\newcommand{\TT}{{\mathbb T}}
\newcommand{\WW}{{\mathbb W}}
\newcommand{\ZZ}{{\mathbb Z}}

\newcommand{\uM}{{\underline M}}

\newcommand{\Coh}{\mathop{\rm Coh}}

\newcommand{\cEnd}{{\mathop{\mathcal{E}nd}\,}}

\newcommand{\Gr}{\mathop{\rm Gr}}

\newcommand{\ti}{\tilde}

\newcommand{\id}{{\mathop{\rm id}}}

\newcommand{\tf}{{\mathop{\rm tf}}}

\newcommand{\Spec}{\mathop{\rm Spec \, }}

\newcommand{\QCoh}{{\mathop{{\rm QCoh}}}}

\newcommand{\Sch}{{\mathop{{\rm Sch }}}}

\newcommand{\Hom}{{\mathop{{\rm Hom}}}}
\newcommand{\cHom}{{\mathop{{\mathcal H}om}}}

\newcommand{\im}{\mathop{\rm im}}
\newcommand{\coker}{\mathop{\rm coker}}
\renewcommand{\mod}{\mathop{\rm mod}}

\newcommand{\ST}{\mathop{\rm ST}}
\newcommand{\GL}{\mathop{\rm GL}}

\begin{document}

\title{Moduli spaces of semistable modules over Lie algebroids}
\author{Adrian Langer}
\date{\today}

\subjclass[2010]{Primary 14D20, Secondary 14G17, 14J60, 17B55}

\maketitle

{\sc Address:}\\
Institute of Mathematics, University of Warsaw,
ul.\ Banacha 2, 02-097 Warszawa, Poland\\
e-mail: {\tt alan@mimuw.edu.pl}

\medskip

\begin{abstract} 	
	We show a few basic results about moduli spaces of semistable modules over Lie algebroids.
	The first result shows that such moduli spaces exist for relative projective morphisms of noetherian schemes, removing some earlier constraints. The second result proves general separatedness Langton type theorem for such moduli spaces. More precisely, we prove S-completness of some moduli stacks of semistable modules.	
In some special cases this result identifies closed points of the moduli space  of Gieseker semistable sheaves on a projective scheme and of the Donaldson--Uhlenbeck compactification of the moduli space of slope stable locally free sheaves on a smooth projective surface. 
	 The last result generalizes properness of Hitchin's morphism and it shows properness of so called Hodge-Hitchin morphism defined in positive characteristic on the moduli space of Gieseker semistable integrable $t$-connections in terms of the $p$-curvature morphism. This last result was proven in the curve case by de Cataldo and Zhang using completely different methods.
\end{abstract}

\section*{Introduction}

In this paper we continue the study of relative moduli spaces of semistable modules over Lie algebroids  started in \cite{La}. The aim is to show three theorems about such moduli spaces. The first result says that such moduli spaces exist in a larger  and more natural class of schemes than previously claimed. Namely, 
\cite[Theorem 1.1]{La} asserts existence of such  moduli spaces for projective families over a base of finite type over a universally Japanese ring. Here we relax these assumptions and prove existence of moduli spaces for projective families over any noetherian base (see Theorem \ref{general-moduli}).

The second result concerns \cite[Theorem 5.2]{La}, whose proof was omitted in \cite{La}. Here we prove the following much stronger version of this theorem:

\begin{Theorem}\label{main2}
	Let $R$ be a discrete valuation ring with quotient ring $K$ and residue field $k$. Let $X\to S=\Spec R$
	be a projective morphism and let us fix a relatively ample line bundle on $X/S$. Let $L$ be a smooth $\cO_S$-Lie algebroid on $X$ and let $E_1$ and $E_2$ be $L$-modules, which as $\cO_X$-modules are coherent  of relative dimension $d$ and flat over $S$. Assume that there exists an isomorphism $\varphi: (E_1)_K\to (E_2)_K$ of $L_K$-modules.
	Then we have the following implications:
	\begin{enumerate}
		\item If $(E_1)_k$ and $(E_2)_k$ are Gieseker semistable  then they are S-equivalent.
		\item  If $(E_1)_k$ and $(E_2)_k$ are Gieseker polystable  then they are isomorphic.
		\item If $(E_1)_k$ is stable and $(E_2)_k$ is Gieseker semistable then the
		$L$-modules $E_1$ and $E_2$ are isomorphic. 
	\end{enumerate}
\end{Theorem}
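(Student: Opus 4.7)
The plan is to adapt the classical Langton-type separatedness argument to the setting of $L$-modules, using systematically that $L$-invariant $\cO_X$-submodules, quotients, sums, and intersections of $L$-modules remain $L$-modules. After identifying $(E_1)_K$ with $(E_2)_K$ via $\varphi$ and calling this common $L_K$-module $V$, we view $E_1$ and $E_2$ as flat $L$-lattices in $V$. Replacing $\varphi$ by $t^N \varphi$ for the uniformizer $t$ of $R$ and a suitable integer $N$, we may assume $E_1 \subseteq E_2$ with $E_2/E_1$ annihilated by some power $t^n$ of $t$.

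I handle first the base case $n = 1$, i.e., $tE_2 \subseteq E_1 \subseteq E_2$. Set $F := E_1/tE_2$ and $G := E_2/E_1$, both coherent $L$-modules on $X_k$. Multiplication by $t$ induces a canonical $L$-linear isomorphism $E_2/E_1 \cong tE_2/tE_1 \hookrightarrow (E_1)_k$, yielding two short exact sequences of $L$-modules on $X_k$:
\[
0 \to F \to (E_2)_k \to G \to 0, \qquad 0 \to G \to (E_1)_k \to F \to 0.
\]
Gieseker semistability of $(E_2)_k$ forces the reduced-Hilbert-polynomial inequality $p_F \leq p$ (as $F$ is a subsheaf), while semistability of $(E_1)_k$ forces $p_F \geq p$ (as $F$ is a quotient). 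Hence $p_F = p$ and similarly $p_G = p$, so $F$ and $G$ are themselves Gieseker semistable. The two sequences then exhibit $(E_1)_k$ and $(E_2)_k$ as successive extensions of the same semistable modules $F$ and $G$ of reduced Hilbert polynomial $p$; both are therefore S-equivalent to $F \oplus G$, settling (1) in the case $n=1$.

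For general $n \geq 2$, the plan is to reduce to the base case by induction on the $R$-length of $E_2/E_1$. Replace $E_1$ by its ``$L$-saturation'' $E_1^+ := t^{-1}E_1 \cap E_2 \subseteq V$; this is an $L$-invariant lattice with $E_1 \subseteq E_1^+ \subseteq E_2$, designed so that $(E_1, E_1^+)$ falls into the base case since $tE_1^+ \subseteq E_1$. If $(E_1^+)_k$ is again Gieseker semistable, the base case applied to $(E_1, E_1^+)$ yields $(E_1)_k \sim (E_1^+)_k$, while the induction hypothesis applied to $E_1^+ \subseteq E_2$ (whose length of modification is strictly smaller) gives $(E_1^+)_k \sim (E_2)_k$. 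The main obstacle is precisely the semistability of $(E_1^+)_k$; I expect to deduce it from the double SES analysis applied directly to the pair $E_1 \subseteq E_2$: the sub $F_1 := E_1/(E_1 \cap tE_2) \subseteq (E_2)_k$ and the quotient $G_1 := E_2/(E_1+tE_2)$ satisfy $p_{F_1} = p_{G_1} = p$ by the same sub/quotient argument as in the base case, and $(E_1^+)_k$ is controlled by $F_1$ and $G_1$ in a way that forces semistability. Alternatively, one may appeal to the Langton-type semistable-reduction theorem for $L$-modules (implicit in the proof of Theorem \ref{general-moduli}) to modify $E_1^+$ without altering the S-equivalence class of its special fiber.

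Part (2) is an immediate consequence of (1), since each S-equivalence class contains a unique polystable representative up to isomorphism, namely the direct sum of its Jordan--H\"older factors. For part (3), normalize $\varphi$ so that $\varphi(E_1) \subseteq E_2$ but $\varphi(E_1) \not\subseteq tE_2$; such a normalization is achieved by multiplying $\varphi$ by the appropriate power of $t$. The induced reduction $\bar\varphi \colon (E_1)_k \to (E_2)_k$ is then a nonzero $L$-module homomorphism. Stability of $(E_1)_k$ (equivalently, its simplicity) forces $\bar\varphi$ to be injective, and equality of Hilbert polynomials $P((E_1)_k) = P((E_2)_k)$, which holds by flatness and the fact that $\varphi_K$ is an isomorphism, then forces $\bar\varphi$ to be an isomorphism. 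Since $\varphi$ is now an isomorphism on both the special and generic fibers of the flat family $X \to S$, it is an isomorphism of $L$-modules on all of $X_R$.
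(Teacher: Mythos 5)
Your overall strategy is the right one and is close in spirit to the paper's: both proofs interpolate between the two lattices by elementary ($t$-adic distance one) modifications and show that each step replaces the special fibre by an extension of the same two semistable pieces in the opposite order. Your base case $tE_2\subseteq E_1\subseteq E_2$ and your part (3) are essentially correct (modulo the sloppy ``stable $=$ simple'' parenthetical; the correct point is that a nonzero map from a stable to a semistable module with equal reduced Hilbert polynomials is injective, and purity of $(E_1)_k$, automatic for Gieseker semistable sheaves, is what the paper uses at this step). However, there is a genuine gap exactly where you flag it: the semistability of $(E_1^+)_k$ for $E_1^+=t^{-1}E_1\cap E_2$. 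Neither of your two suggested routes closes it. The extension $0\to F_1\to (E_1^+)_k\to E_1^+/E_1\to 0$ (note $E_1\cap tE_2=tE_1^+$, so $F_1=E_1/tE_1^+$) only gives semistability of the sub $F_1$; the quotient $E_1^+/E_1=\ker\bigl(t\colon E_2/E_1\to E_2/E_1\bigr)$ is not obviously semistable of reduced Hilbert polynomial $p$, and you give no argument for it. The fallback via Langton's semistable reduction is not viable: that theorem replaces $E_1^+$ by a \emph{different} lattice whose special fibre has a \emph{different} S-equivalence class (that is the whole point of the modification), and the new lattice need not sit between $E_1$ and $E_2$ at distance one.

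The paper sidesteps this by working with \emph{sums} rather than intersections: it sets $E(a,b)=\im\bigl(E_1\oplus E_2'\xrightarrow{\pi^a\oplus\pi^b}(E_1)_K\bigr)$ and proves semistability of every $E(a,b)_k$ at once, because $E(a,b)_k$ receives a surjection from $(E_1)_k\oplus(E_2')_k$, which is semistable with the correct reduced Hilbert polynomial, so it cannot admit a destabilizing quotient; the chain $E(0,m)=E_1,\dots,E(n,0)=E_2'$ then consists of distance-one steps, each handled exactly as in your base case. If you want to keep your saturation $E_1^+=t^{-1}E_1\cap E_2$, the gap is fillable by the dual device: the sequence $0\to E_1^+\to t^{-1}E_1\oplus E_2\to t^{-1}E_1+E_2\to 0$ has $R$-torsion-free (hence $R$-flat) cokernel, so reduction mod $t$ stays exact and $(E_1^+)_k$ \emph{injects} into $(E_1)_k\oplus(E_2)_k$; being a subsheaf of a semistable sheaf with the same reduced Hilbert polynomial, it is semistable. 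Without one of these two arguments the induction does not run, so as written the proposal does not prove part (1). Finally, you should also justify that the normalized map $t^N\varphi\colon E_1\to E_2$ is a morphism of $L$-modules (the paper does this via its flatness lemma; alternatively, it follows because $t\in R=\cO_S$ is horizontal and $E_2\hookrightarrow (E_2)_K$ by flatness), a point you assert but do not check.
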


This theorem is a strong generalization of Langton's \cite[Theorem, p.~99]{Lt} and it implies separatedness of the moduli space of Gieseker semistable modules over a smooth Lie algebroid. Whereas this result follows from the GIT construction
of such moduli spaces, we prove a much more general result (see Theorem \ref{slope-Langton2} or below for a simple example)  that cannot be obtained in this way. 

It is well known that S-equivalent sheaves correspond to the same point in the moduli space (see, e.g., \cite[Lemma 4.1.2]{HL}). So the above theorem implies that points of the moduli space of (Gieseker) semistable modules over a smooth Lie algebroid (or, in the special case, points of the moduli space of  semistable sheaves) correspond to  S-equivalence classes of semistable modules. This was the last missing step in Faltings's non-GIT construction of the moduli space of semistable sheaves on a semistable curve (see \cite[the last paragraph on p.~509]{Fa}). A different approach to this problem for the moduli stack of semistable vector bundles over a smooth projective curve defined over a field of characteristic zero was recently developed in \cite{AHLH} (see \cite[Lemma 8.4]{AHLH}). In their language, we prove even stronger result than Theorem \ref{main2} saying that the moduli stack of Giesker semistable modules over a smooth Lie algebroid on a projective scheme is S-complete (see Theorem \ref{S-completness}). In case $k$ has characteristic zero this result follows from \cite[Proposition 3.47]{AHLH} and separatedness of the moduli space of Gieseker semistable modules but it is not so in positive characteristic. Recently, D. Greb and M. Toma pointed out to the author that  in \cite[Section 4]{GT2} they gave a direct proof of Langton's type separatedness criterion for Gieseker semistable sheaves. In their case the result does not follow from the GIT construction as they consider moduli spaces on complex projective varieties with (possibly non-ample) K\"ahler polarizations.

The original motivation to reconsider this problem was provided by Chen\-yang Xu during his talk on the ZAG seminar. Namely,  when showing proof of an analogous result for $\QQ$-Gorenstein log Fano varieties (see \cite[Theorem 1.1]{BX}), he said that there is no known direct proof of this fact for Gieseker semistable sheaves (as pointed out above, this was in fact known and proven in\cite{GT2}). 

The proof that we provide here is modelled on Gabber's proof of an analogous fact for vector bundles with integrable connections on smooth complex varieties (see \cite[Variant 2.5.2]{Ka}). The differences come mainly from the fact that coherent modules with an integrable connection on complex varieties are locally free, whereas we need to study flatness and semistability of various modules appearing in the proof. Another difference is that irreducibility of vector bundles with an integrable connection corresponds to stability in our case.

\medskip

Theorem \ref{main2} is stated for Gieseker semistability but we prove a much more general result that works also, e.g., for slope semistability. In this introduction we will formulate this result only in the simplest possible case, leaving the full generalization to Theorem \ref{slope-Langton2}.
Before formulating the result we need to slightly change the usual notion of slope semistability to allow non-torsion free sheaves. 

Let $Y$  be a smooth projective scheme defined over an algebraically closed field $k$ and let us fix an ample polarization. Let $E$ be  a coherent sheaf on $Y$ and let $T(E)$ denote the torsion part of $E$.
We say that $E$ is \emph{slope semistable} if $c_1 (T(E))=0$ and $E/T(E)$  is slope semistable  in the usual sense (we  allow $E/T(E)$ to be trivial).

We say that two slope semistable sheaves $E$ and $E'$ are \emph{strongly S-equivalent} if there exist filtrations $F_{\bullet }E$ of $E$ and $F'_{\bullet} E'$ of $E'$ such that  associated graded $\Gr ^{F}(E)$ and $ \Gr ^{F'}(E')$ are slope semistable and isomorphic to each other (see Definition \ref{semistable-filtr} and Lemma \ref{cor-Langton}). One can show that this induces an equivalence relation on slope semistable sheaves (see Corollary \ref{equivalence-relation}).

If $F_{\bullet }E$ is a filtration of $E$ then Rees construction provides a deformation of $E$ to $\Gr ^{F}(E)$. In particular, strongly S-equivalent sheaves should correspond to the same point in the ``moduli space of slope semistable sheaves''. The following result describes closed points of such ``moduli space of slope semistable sheaves''. 

\begin{Theorem}\label{main3}
	Let $R$ be a discrete valuation ring with quotient ring $K$ and residue field $k$.  
	Let $E_1$ and $E_2$ be flat families of slope semistable sheaves on $Y$ parametrized by $S=\Spec R$.
If there exists an isomorphism $\varphi: (E_1)_K\to (E_2)_K$  then we have the following implications:
	\begin{enumerate}
		\item If $(E_1)_k$ and $(E_2)_k$ are slope semistable  then they are strongly S-equivalent.
		\item  If $(E_1)_k$ and $(E_2)_k$ are slope polystable  then they are isomorphic.
		\item If $(E_1)_k$ is slope stable and torsion free and $(E_2)_k$ is slope semistable then the families $E_1$ and $E_2$ are isomorphic. 
	\end{enumerate}
\end{Theorem}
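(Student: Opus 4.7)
The plan is to deduce this slope version from the general slope variant of the S-completeness / Langton theorem advertised as Theorem \ref{slope-Langton2}, applied with the trivial Lie algebroid $L=0$ (so that $L$-modules are just coherent $\cO_X$-modules). Both $E_1,E_2$ give rise to $S$-points of the stack of slope semistable sheaves, and the generic isomorphism $\varphi$ provides the gluing datum in the sense of S-completeness. Thus part (1) is nothing but unravelling the definition: Theorem \ref{slope-Langton2} furnishes filtrations $F_\bullet (E_1)_k$ and $F'_\bullet (E_2)_k$ with slope semistable graded pieces and an isomorphism $\Gr^{F}((E_1)_k)\cong \Gr^{F'}((E_2)_k)$, which is exactly strong S-equivalence in the sense of Definition \ref{semistable-filtr}.

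For part (2), suppose $(E_1)_k$ and $(E_2)_k$ are slope polystable, and write each as a direct sum of slope stable sheaves of a common slope (plus possibly a torsion summand with $c_1=0$). The key point is that for such a polystable $E$ and any filtration $F_\bullet E$ whose graded pieces are slope semistable of the same slope as $E$, the associated graded $\Gr^F(E)$ is isomorphic to $E$: since $\Hom$ between non-isomorphic slope stable sheaves of the same slope vanishes, each step of the filtration splits off a polystable isotypic summand, and iteration identifies $E$ with $\Gr^F(E)$. Combined with part (1), this gives $(E_1)_k\cong \Gr^F((E_1)_k)\cong \Gr^{F'}((E_2)_k)\cong (E_2)_k$.

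For part (3), assume $(E_1)_k$ is torsion free and slope stable. Let $\pi$ be a uniformizer; rescaling $\varphi$ by a unique power of $\pi$, we may assume that $\varphi$ extends to a morphism $\widetilde\varphi\colon E_1\to E_2$ over $S$ whose reduction $\widetilde\varphi_k$ is nonzero. If $\ker\widetilde\varphi_k$ were nonzero, it would be a proper subsheaf of the stable sheaf $(E_1)_k$, forcing $\im\widetilde\varphi_k$ to have slope strictly bigger than $\mu((E_1)_k)=\mu((E_2)_k)$, which is ruled out by semistability of $(E_2)_k$. Hence $\widetilde\varphi_k$ is injective. Flatness of $E_1,E_2$ and the generic isomorphism $\varphi_K$ imply that $(E_1)_k$ and $(E_2)_k$ share the same Hilbert polynomial with respect to the fixed ample class, so the cokernel of $\widetilde\varphi_k$ has vanishing Hilbert polynomial and is therefore zero. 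Thus $\widetilde\varphi$ is an isomorphism, and its restriction to the generic fibre agrees with $\varphi$ up to the scalar $\pi^{n}$, which must in fact be a unit since both $\widetilde\varphi_K$ and $\varphi$ are already isomorphisms of $(E_1)_K$ with $(E_2)_K$.

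The substantive difficulty is entirely in Step~1: Theorem \ref{slope-Langton2} (the slope S-completeness statement) is the heart of the matter, whose proof will be modelled on Gabber's argument for integrable connections but has to cope with the fact that slope semistable sheaves here are allowed to have torsion of strictly lower dimension with trivial $c_1$, and that iterated Hecke modifications along the special fibre must be controlled so as not to escape this enlarged semistable locus. Once that theorem is in hand, parts (1)--(3) are the comparatively easy unpackaging carried out above; the only mildly non-formal ingredient is the splitting argument for polystable sheaves in part (2).
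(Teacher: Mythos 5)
Your overall strategy---specializing Theorem \ref{slope-Langton2} to the trivial Lie algebroid with $d'=d-1$, so that ``semistable in $\Coh^L_{d,d-1}(Y)$'' becomes the enlarged notion of slope semistability---is exactly the paper's route, and your part (1) is correct modulo that theorem (the reformulation you use is Lemma \ref{cor-Langton}). The gaps are in parts (2) and (3).

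In part (2) your key lemma is false. For the polystable torsion-free sheaf $E=\cO_Y^{\oplus 2}$, the filtration $0\subset I_Z\subset \cO_Y^{\oplus 2}$ (with $Z$ of codimension $\ge 2$ and $I_Z$ sitting in the first factor) has graded pieces $I_Z$ and $\cO_Z\oplus\cO_Y$, which are slope semistable of the same slope as $E$ in the enlarged sense, yet $\Gr^F(E)=I_Z\oplus\cO_Z\oplus\cO_Y\not\simeq E$. The Hom-vanishing you invoke also fails: $\Hom(I_Z,\cO_Y)\ne 0$ although these are non-isomorphic slope stable sheaves of the same slope. The splitting-off argument is valid for Gieseker Jordan--H\"older filtrations, but the filtrations produced by Theorem \ref{slope-Langton2} are only $d'$-semistable, and (as the paper stresses) their quotients may acquire torsion even when $E$ is torsion free. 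Correspondingly, Theorem \ref{slope-Langton2}(2) concludes only that the special fibres are isomorphic \emph{in the quotient category} $\Coh^L_{d,d-1}(X_k)$, i.e.\ in codimension one, and that is how ``isomorphic'' in Theorem \ref{main3}(2) has to be read; it follows at once from part (1) because a polystable object is isomorphic in $\Coh_{d,d-1}$ to the associated graded of any $d'$-semistable filtration. A genuine sheaf isomorphism is not available here: the Rees construction for $0\subset I_z\subset\cO_Y\subset\cO_Y^{\oplus 2}$ gives a flat family with generic fibre $\cO_Y^{\oplus 2}$ degenerating to the polystable sheaf $I_z\oplus k(z)\oplus\cO_Y$.

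In part (3) your injectivity argument does not cover the case where $\im\,\varphi'_k$ is torsion. The enlarged semistability of $(E_2)_k$ explicitly allows torsion subsheaves with trivial $c_1$, so a nonzero map whose image has rank zero produces no slope contradiction. The correct argument (the paper's) works in $\Coh_{d,d-1}$: the image is a quotient of the stable object $(E_1)_k$ and a subobject of the semistable object $(E_2)_k$ with the same truncated reduced Hilbert polynomial, which forces the kernel to have dimension $\le d-2$; torsion-freeness of $(E_1)_k$ then forces the kernel to vanish, and the Hilbert-polynomial count kills the cokernel as you say. As written, your proof does not exclude a nonzero reduction $\varphi'_k$ landing inside $T((E_2)_k)$.
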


One can check that in the surface case strong S-equivalence classes correspond to closed points of the Donaldson--Uhlenbeck compactification of the moduli space (see Proposition \ref{Donaldson-Uhlenbeck}). In higher dimensions (in the characteristic zero case) there also exists an analogous construction of projective ``moduli space of slope semistable sheaves'' due to D. Greb and M. Toma (see \cite{GT}). However, their moduli space identifies many 
strong S-equivalence classes (see Example \ref{higher-dim-example}). So unlike in the surface case, closed points of their moduli spaces cannot be recovered by looking at families of (torsion free) slope semistable sheaves.

As in the previous case, the proof of  Theorem \ref{main3} shows that the moduli stack of slope semistable 
sheaves on $Y$ is S-complete. It was already known that the moduli stack of torsion free slope semistable sheaves is of finite type (see \cite{La0}) and universally closed (see \cite{Lt}). However, this stack is not S-complete (and even in characteristic zero it does not have a good moduli space, as automorphism groups of slope polystable sheaves need not be reductive). Allowing some torsion, we enlarge this stack so that it becomes S-complete and the stack of torsion free slope semistable sheaves is open in this new stack. Unfortunately, the obtained stack is no longer of finite type and it is not universally closed.

\medskip

The last result was motivated by a question of Mark A. de Cataldo asking the author about properness of the so called Hodge--Hitchin morphism. In \cite{dCZ} the authors proved such properness by finding a projective completion of the moduli space of $t$-connections in positive characteristic. Here we reprove this result in a more general setting using Langton's type theorem for restricted Lie algebroids. In fact, for a general restricted Lie algebroid the $p$-curvature defines two different proper maps but with similar proofs of properness. The main difficulty is to find an extension of a semistable module over a restricted Lie algebroid from the general fiber to the special fiber.
In general, this is not possible and it fails, e.g., for vector bundles with integrable connection over complex varieties. However, using special characteristic $p$ features one can prove an appropriate result using arguments similar to properness of the Hitchin morphism.

Then we use \cite{BMR} to show that in the case of a special Lie algebroid related to the relative tangent bundle, the two morphisms are related and we give a precise construction of the Hodge--Hitchin morphism in higher dimensions. This implies the following result (see Corollary \ref{Hodge-Hitchin-properness}).

\begin{Theorem}
Let $f : X\to S$ be a smooth morphism of noetherian schemes of characteristic $p$ and let $P$ be a Hilbert polynomial
of rank $r$ sheaves on the fibers of $f$. Then 
the Hodge--Hitchin morphism
$$M_{Hod} (X/S, P)\to \left( \bigoplus _{i=1}^r f_* \left({{S} }^i \Omega_{X/S}\right) \right) \times \AA^1$$
is proper.
\end{Theorem}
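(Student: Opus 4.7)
The plan is to deduce the statement by combining a general properness result for the $p$-curvature morphism attached to a restricted Lie algebroid with the BMR description of the sheaf of crystalline differential operators.

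First, I would realize $M_{Hod}(X/S,P)$ as a moduli space of Gieseker semistable modules over a Lie algebroid. Let $\cL$ be the sheaf $T_{X/S}\boxtimes\cO_{\AA^1}$ on $X\times_S\AA^1_S$ equipped with anchor $v\otimes 1\mapsto tv$, where $t$ is the coordinate on $\AA^1$. An $\cL$-module flat over $\AA^1$ restricts over $\{t=t_0\}$ to a $t_0$-connection, so that $M_{Hod}(X/S,P)$ is exactly the moduli space of Gieseker semistable $\cL$-modules whose fibers have Hilbert polynomial $P$. In characteristic $p$ this $\cL$ admits a natural restricted structure whose $p$-operation sends $v\otimes 1$ to $t^{p-1}v^{[p]}$, interpolating between the usual restricted structure on $T_{X/S}$ at $t=1$ and the trivial one at $t=0$; the corresponding $p$-curvature accordingly interpolates between the classical $p$-curvature of a connection and the $p$-th power of a Higgs field.

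Second, I would apply the general properness statement for the joint (anchor, $p$-curvature) morphism on the moduli of Gieseker semistable restricted $\cL$-modules. The proof is by the valuative criterion: given a DVR $R$ with quotient field $K$ and a flat family of Gieseker semistable $\cL$-modules $E_K$ over $\Spec K$ whose image under the joint morphism extends to $\Spec R$, one must produce an extension of the family. The critical input is Theorem~\ref{main2} together with its restricted Lie algebroid variant; boundedness of the coefficients of the characteristic polynomial of the $p$-curvature (together with boundedness of $t$) bounds the family of Gieseker semistable sheaves appearing in a Langton-type iteration, forcing termination and hence existence of the extension, exactly in analogy with the standard proof of properness of the Hitchin morphism.

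Third, I would invoke \cite{BMR} to identify the target of the $p$-curvature morphism for this $\cL$ with $\bigl(\bigoplus_{i=1}^r f_*S^i\Omega_{X/S}\bigr)\times\AA^1$. For a $t$-connection $\nabla_t$, the $p$-curvature is an $F$-linear map $E\to E\otimes\Omega_{X/S}$, and the coefficients of its characteristic polynomial a priori lie in Frobenius-twisted symmetric powers of $\Omega_{X/S}$; the identification of the center of the algebra of crystalline differential operators proved in \cite{BMR} descends these to genuine sections of $S^i\Omega_{X/S}$, matching the stated target. The main obstacle will be the restricted variant of Langton's theorem: the Langton modifications on the special fiber must be performed inside the category of restricted $\cL$-modules so as to preserve the $p$-curvature of the family, and this is precisely the step that distinguishes the present approach from the projective-completion method of \cite{dCZ} and that demands the full strength of the S-completeness results proved earlier in the paper.
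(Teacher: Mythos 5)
Your overall architecture matches the paper's: the same interpolating restricted Lie algebroid $\TT_{X/S,\AA^1}$ (anchor $t\cdot \id$, $p$-operation $t^{p-1}(\cdot)^{[p]}$), the same valuative-criterion strategy modeled on properness of the Hitchin morphism, and the same use of \cite{BMR} to identify the target of the $p$-curvature morphism with honest (untwisted) symmetric powers on the Frobenius twist --- this is Theorem \ref{Laszlo-Pauly} and the corollary constructing $\ti H_p$. The issues are in your second step.

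First, you name Theorem \ref{main2} as the critical input for \emph{producing} the extension over $\Spec R$. Theorem \ref{main2} (equivalently its generalization Theorem \ref{slope-Langton2}, i.e.\ S-completeness, Theorem \ref{S-completness}) is a separatedness statement: it compares two families that already exist and agree over $K$; it cannot produce one. Existence rests on the Langton \emph{extension} theorem for Lie algebroid modules (Theorem \ref{slope-Langton}, i.e.\ \cite[Theorem 5.3]{La}), and the paper needs both ingredients --- universal closedness from Theorem \ref{p-Hitchin-properness} and S-completeness from Theorem \ref{S-completness} --- to conclude properness. Second, and more substantively, before any Langton iteration can begin one must extend the $L_K$-module $M$ on $X_K$ to a \emph{coherent} $\cO_X$-module with $L$-module structure over all of $X$. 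This is precisely the step that fails in characteristic zero for integrable connections and is the main difficulty the paper isolates. It is resolved by passing to $\ti\Lambda_L$-modules on $\VV(F_X^*L)$ (Lemma \ref{equivalence}), extending there as a quasi-coherent module of finite type (Lemma \ref{extension}), and then using the given $R$-point of the Hitchin base together with the spectral scheme $\WW^L(X/S,r)$ to force the support to be proper over $X$, whence coherence. Your ``boundedness of the coefficients of the characteristic polynomial'' is the right ingredient deployed in the wrong place: it is what makes the initial extension coherent, not what terminates the Langton iteration (termination is internal to \cite[Theorem 5.3]{La} and does not use the Hitchin base). With the extension step repaired along these lines, the rest of your outline coincides with the paper's proof.
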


In the above theorem $M_{Hod} (X/S, P)$ denotes the moduli space of Gieseker semistable modules  with an integrable $t$-connection  and Hilbert polynomial $P$.

\medskip

The paper is organized as follows. In Section 1 we prove general existence theorem for moduli spaces of semistable modules over Lie algebroids. In Section 2 we define and study strong S-equivalence. In Section 3 we prove Theorems \ref{main2} and \ref{main3}. In Section 4 we recall some facts about the $p$-curvature of modules over restricted Lie algebroids.  Then in Section 5 we prove the results related to properness of the Hodge--Hitchin morphism in all 
dimensions.

\subsection*{Notation}

Let $\cA$ be an abelian category and let $E$ be an object of $\cA$.
All filtrations $F_{\bullet}E$ in the paper are finite and increasing. In particular, they start with the zero object and finish with $E$.

We say that a filtration $F'_{\bullet}E$ of $E$ is a \emph{refinement} of a filtration $F_{\bullet}E$ if for every $F_iE$ there exists $j$ such that $F_{j}'E=F_iE$. In this case we say that $F_{\bullet}E$ is \emph{refined} to $F'_{\bullet}E$.

\section{Moduli space of semistable  $\Lambda$-modules}

In this section we generalize the result on existence of moduli spaces of semistable modules
for projective families over a base of finite type over a universally Japanese ring to
an optimal setting of noetherian schemes. This generalization is obtained by replacing 
the use of Seshadri's results \cite{Se} on GIT quotients with a more modern technology due 
to V. Franjou and W. van der Kallen \cite{FvdK}.

\medskip

Let $f: X\to S$ be a projective morphism of noetherian schemes and let $\cO_X(1)$ be an $f$-very ample line bundle. Let $\Lambda$ be a sheaf of rings of differential operators on $X$ over $S$ (see \cite[Section 1]{La} for the definition).

Let $T$ be an $S$-scheme.  A \emph{family of $\Lambda$-modules on the fibres of} $p_T:X_T=X\times_S T\to T$ 
(or a \emph{family of $\Lambda$-modules on $X$ parametrized by $T$})
is a $\Lambda_T$-module $E$ on $X_T$, which is quasi-coherent, locally finitely presented and $T$-flat as an $\cO_{X_T}$-module. We say that $E$ is a \emph{family of Gieseker semistable $\Lambda$-modules on the fibres of} $p_T:X_T=X\times_S T\to T$ if $E$ is a family of $\Lambda$-modules on the fibres of $p_T$ 
and for every geometric point $t$ of $T$ the restriction of $E$ to the fibre $X_t$ is pure and Gieseker semistable as a $\Lambda_t$-module.

We introduce an equivalence relation $\sim $ on such families by
saying that $E\sim E'$ if and only if there exists an invertible
$\cO_T$-module $L$ such that {$E'\simeq E\otimes p_T^* L$.}

One defines the moduli functor
$$\uM ^{\Lambda}(X/S, P) : (\hbox{\rm Sch/}S) ^{o}\to \hbox{Sets} $$
from the category of schemes over $S$ to the
category of sets by
$$\uM ^{\Lambda}(X/S, P) (T)=\left\{
\aligned
&\sim\hbox{equivalence classes of families of Gieseker}\\
&\hbox{semistable $\Lambda$-modules $E$ on the fibres of } X_T\to T,\\
&\hbox{such that for every point $t\in T$ the Hilbert polynomial}\\
&\hbox{of $E$ restricted to the fiber $X_t$ is equal to $P$.}\\
\endaligned
\right\} .$$

The reason to define the moduli functor on the category of all $S$-schemes, instead of locally noetherian $S$-schemes as in \cite{HL} or \cite{Ma}, is that the moduli stack needs to be defined in that generality and one wants to relate the moduli space to the moduli stack.

We have the following theorem generalizing earlier results of
C. Simpson, the author and many others (see \cite[Theorem 1.1]{La}).

\begin{Theorem} \label{general-moduli}
	Let us fix a polynomial $P$. Then there exists a quasi-projective
	$S$-scheme $M ^{\Lambda}(X/S, P)$ of finite type over $S$ and a
	natural transformation of functors
	$$\varphi :\uM ^{\Lambda}(X/S, P)\to \Hom _S (\cdot, M ^{\Lambda}(X/S, P)),$$
	which uniformly corepresents the functor $\uM ^{\Lambda}(X/S, P)$.
	
	For every geometric point $s\in S$ the induced map $\varphi (s)$
	is a bijection. Moreover, there is an open scheme $M ^{\Lambda,
		s}(X/S, P)\subset M ^{\Lambda}(X/S, P)$ that universally
	corepresents the subfunctor of families of geometrically Gieseker
	stable $\Lambda$-modules.
\end{Theorem}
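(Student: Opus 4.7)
The plan is to follow the GIT construction carried out in \cite[Theorem~1.1]{La} and to isolate the single step where the hypothesis on the base enters. By the standard boundedness results already invoked there, one fixes $m\gg 0$ such that every family of Gieseker semistable $\Lambda$-modules with Hilbert polynomial $P$ becomes $m$-regular after any base change; then $p_{T*}(E(m))$ is locally free of rank $N=P(m)$ on $T$, and a local trivialisation yields a surjection $\cO_{X_T}(-m)^{\oplus N}\twoheadrightarrow E$ compatible with the $\Lambda_T$-action. As in \cite{La}, such data are parametrised by a locally closed subscheme $R^{ss}$ of an appropriate Quot-type scheme over $S$, carrying an action of $G=\GL(N)$ whose orbits correspond to the $\sim$-equivalence classes of families. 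A sufficiently positive twist produces a $G$-linearised ample line bundle on $R^{ss}$, and the candidate $M^{\Lambda}(X/S,P)$ will be the good quotient $R^{ss}/\!/G$.

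The main obstacle, and in fact the only new content, is the GIT step: constructing this good quasi-projective quotient together with uniform corepresentability, over an arbitrary noetherian base. In \cite{La} this was supplied by Seshadri's \cite{Se} theorems, whose hypotheses are what force the base to be of finite type over a universally Japanese ring. The plan is to substitute here the finite generation results of Franjou and van der Kallen \cite{FvdK}, which guarantee that the graded $\cO_S$-algebra of $G$-invariants of a finitely generated graded $\cO_S$-algebra with reductive $G$-action remains finitely generated in full noetherian generality. Applied to the algebra of $G$-invariant sections of the chosen linearisation on $R^{ss}$, this delivers a finitely generated graded invariant $\cO_S$-algebra, whose relative $\Proj$ yields the quasi-projective $S$-scheme $M^{\Lambda}(X/S,P)$ together with the natural transformation $\varphi$. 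The technical work to verify is that each appeal to geometric reductivity or to finite generation of invariants in the GIT part of \cite{La} can indeed be routed through \cite{FvdK} without the universally Japanese hypothesis; I expect no real friction here, since \cite{FvdK} is essentially designed to cover exactly this gap.

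The remaining assertions are then inherited almost verbatim from \cite{La}. Bijectivity of $\varphi(s)$ on each geometric fibre of $S$ is a fibrewise statement, matching $\sim$-equivalence classes with closed $G$-orbits in $R^{ss}_s$ via the Jordan--H\"older argument for $\Lambda$-modules. The open subscheme $M^{\Lambda,s}(X/S,P)$ corresponds to the GIT-stable open of $R^{ss}$; since $G$ acts there with only scalar stabilisers and the induced quotient is geometric, universal (rather than merely uniform) corepresentability follows as in the classical case. Thus the essential novelty is concentrated entirely in the second paragraph, namely the substitution of \cite{FvdK} for \cite{Se} in the single GIT step of the construction of \cite{La}.
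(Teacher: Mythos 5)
Your proposal matches the paper's own argument: the paper likewise observes that the only place the universally Japanese (Nagata) hypothesis enters the construction of \cite{La} and \cite{Ma} is Seshadri's theorem on GIT quotients of finite type, and replaces it by the results of van der Kallen and Franjou--van der Kallen, which give the semistable locus and a good quotient projective over an arbitrary noetherian base. The only minor point the paper adds that you omit is the need to adjust the definition of families over non-locally-noetherian test schemes (as in the modern construction of Quot schemes) so that the moduli functor is defined on all $S$-schemes.
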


\begin{proof} 
	Here we simply sketch the changes that one needs to do in general.
	The full proof in case $\Lambda =\cO_X$ and $S$ is of finite type over 
	a universally Japanese ring is written down in the book \cite{Ma} (see \cite[Chapter 3, Theorem 9.4]{Ma} and \cite[Appendix A, Theorem 1.4]{Ma}). One of differences between our theorem and the approach presented in \cite{Ma} is that our moduli functor is defined on all $S$-schemes and not only on locally noetherian $S$-schemes as, e.g., in \cite{Ma}. This needs a slightly different definition of families in case of non locally noetherian schemes, which comes from a now standard approach taken in the construction of Quot schemes (see, e.g., \cite[Theorem 1.5.4]{Ol}).
	
	The boundedness of semistable sheaves with fixed Hilbert polynomial  $P$ 
	(see \cite{La0})
	allows one to consider an open subscheme $R$ of some Quot scheme, whose geometric points contain as quotients all semistable sheaves with fixed $P$. This $R$ comes with a group action of a certain $\GL (V)$ and one constructs the moduli scheme $M ^{\Lambda}(X/S, P)$ as a GIT quotient of $R$ by $\GL(V)$.
	
	The only place, where  one uses that  $S$  is of finite type over  a universally Japanese ring, is via Seshadri's \cite[Theorem 4]{Se} on existence of quotients of finite type (see \cite[Chapter 3, Theorem 2.9]{Ma}). Here one should point out that Seshadri's definition of a universally Japanese ring is not exactly the same as currently used as he also assumes noetherianity (see \cite[Theorem 2]{Se}).  So the notion of a universally Japanese ring used in the formulation of existence of the moduli scheme is nowadays usually called a Nagata ring.
	
	Let us recall that a  reductive group scheme over a scheme $S$ (in the sense of SGA3) is a smooth affine group scheme over $S$ with geometric fibers that are connected and reductive.
	
	We need to replace Seshadri's theorem by the following result (we quote only the most important properties)
	that follows from  \cite[Theorems 8 and 17]{vdK} (see also \cite[Theorems 3 and 12]{FvdK}):
	
	\begin{Theorem}
		Let $G$ be a reductive group scheme over a noetherian scheme $S$.  Assume that $G$ acts on a projective $S$-scheme $f: X\to S$ and there exists a $G$-linearized $f$-very ample line bundle $L$ on $X$.
		Then the following hold.
		\begin{enumerate}
			\item There is a $G$-stable open $S$-subscheme $X^{ss}(L)\subset X$, whose geometric points 
			are precisely the semistable points.  
			\item There is a $G$-invariant affine surjective morphism $\varphi: X^{ss}(L)\to Y$ of $S$-schemes  and we have $\cO_Y= \varphi _* (\cO_{X^{ss}(L)})^G$.
			\item $Y$ is projective over $S$.
		\end{enumerate}
	\end{Theorem}
	
	The proof for general $\Lambda$ is analogous to that given in \cite{Si2}.
\end{proof}

\begin{Remarks}
	
	\noindent
	\begin{enumerate}
		\item In \cite[Theorem 4.3.7]{HL} the authors add an assumption that $f: X\to S$ has geometrically connected fibers. This assumption is obsolete. 
		\item  Let us recall that $M ^{\Lambda}(X/S, P)$ {\it uniformly corepresents} $\uM ^{\Lambda}(X/S, P)$ if for every flat morphism $T\to M ^{\Lambda}(X/S, P)$ the fiber product $ T\times _{M ^{\Lambda}(X/S, P)}\uM ^{\Lambda}(X/S, P)$ is corepresented by $T$.
		The definition given in \cite[p.~512]{La} is incorrect.
		\item The need to consider moduli spaces for $X\to S$, where $S$ is of finite type over a non-Nagata ring, appeared already in some papers of de Cataldo and Zhang (see, e.g., \cite{dCZ}), where the authors consider $S$ of finite type over a discrete valuation ring. Such rings need not be Nagata rings (see \cite[Tag 032E, Example 10.162.17]{St}).
	\end{enumerate}
\end{Remarks}

\section{Strong S-equivalence}\label{strong-S-section}

In this section we introduce strong S-equivalence and study its basic properties.
We fixthe following notation.

Let $Y$ be a projective scheme over a field $k$ and let $L$ be a
$k$-Lie algebroid  on $Y$.  Let $\Coh ^L_d(Y)$ be the full subcategory of
the category of $L$-modules which are coherent as $\cO_Y$-modules
and whose objects are sheaves supported in dimension $\le d$. 

For any $d'\le d$ the subcategory $\Coh ^L_{d'-1}(Y)$ is a Serre subcategory and we can form 
the quotient category $\Coh ^L_{d,d'}(Y)=\Coh ^L_{d}(Y)/\Coh ^L_{d'-1}(Y)$. 
Let $S_{d'}$ be the class of morphisms $s: E\to F$ in $\Coh ^L_{d}(Y)$ that are isomorphisms in dimension $\le d'$, 
i.e., such that $\ker s$ and $\coker s$ are supported in dimension $<d'$. This is a multiplicative system and
$\Coh ^L_{d, d'}(Y)$ is constructed as $S_{d'}^{-1}\Coh ^L_{d}(Y)$. So the objects of $\Coh ^L_{d, d'}(Y)$
are objects of $\Coh ^L_{d}(Y)$ and morphisms in $\Coh ^L_{d, d'}(Y)$ are equivalence classes of diagrams
$E\stackrel{s}{\leftarrow} F'\stackrel{f}{\rightarrow }F$ in which $s$ is a morphism from $S_{d'}$.

\subsection{Stability and S-equivalence}

Let us fix an ample line bundle $\cO_Y(1)$ on $Y$. 
For any $E\in \Coh ^L_d(Y)$ we write the Hilbert polynomial of $E$ as
$$P(E, m)= \chi (Y, E\otimes \cO_Y(m))=\sum _{i=0}^d \alpha _i (E) \frac{m^i}{i!}.$$

Let $\QQ [t]_d$ denote the space of polynomials in $\QQ [t]$ of degree $\le d$.

For any object $E$ of $\Coh ^L_{d}(Y)$ of dimension $d$ we define its \emph{normalized Hilbert polynomial} 
$p_{d, d'} (E)$ as an element $P(E)/ \alpha_d(E)$ of $\QQ [T]_{d, d'}=\QQ [t]_d/ \QQ [t]_{d'-1}$.  
If $E$ is of dimension less than $d$ we set $p_{d, d'} (E)=0$.
This factors to
$$p_{d, d'}: {\Coh }^L_{d,d'}(Y) \to \QQ [T]_{d, d'}=\QQ [t]_d/ \QQ [t]_{d'-1}.$$ 

\begin{Definition}
	We say that $E\in \Coh ^L_d(Y)$ is \emph{stable in  $\Coh ^L_{d,d'}(Y)$} if if for 
	all proper subobjects $E'\subset E$ (in  $\Coh ^L_{d,d'}(Y)$) we have
	$$\alpha _d(E) \cdot P(E')< \alpha _d (E')\cdot P(E) \mod \QQ [t]_{d'-1}.$$
	We say that $E\in \Coh ^L_d(Y)$ is \emph{semistable in  $\Coh ^L_{d,d'}(Y)$} if it is either 
	of dimension $<d'$ or it has dimension $d$ and
	for  all proper  subobjects $E'\subset E$ we have
	$$\alpha _d(E) \cdot P(E')\le \alpha _d (E')\cdot P(E) \mod \QQ [t]_{d'-1}.$$
\end{Definition}

\medskip

\begin{Remark}
	\begin{enumerate}
		\item If  $E\in \Coh ^L_d(Y)$ is stable  in  $\Coh ^L_{d,d'}(Y)$ then it is either isomorphic to $0$ 
		in  $\Coh ^L_{d,d'}(Y)$ (so it is of dimension $<d'$) or it has dimension $d$. Otherwise, we can find a non-zero proper subobject $E'\subset E$ and $\alpha _d(E')=\alpha _d(E)=0$, contradicting the required inequality.
		\item 
		In view of the above remark, adding the assumption on the dimension of $E$ in the definition of semistability is done because we would like semistable $E$ to have a filtration, whose quotients are stable.
	\end{enumerate}
\end{Remark}

\medskip

We say that $E\in \Coh ^L_d(Y)$ is \emph{pure in $\Coh ^L_{d,d'}(Y)$}, if it has dimension $d$ and the maximal $L$-submodule $T(E)$ of $E$ of dimension $<d$ has dimension $<d'$.

Note that if  $E\in \Coh ^L_d(Y)$ of dimension $d$ is {semistable in  $\Coh ^L_{d,d'}(Y)$} then it is pure in $\Coh ^L_{d,d'}(Y)$. Indeed, we have $\alpha _d(E) \cdot P(T(E))\le 0 \mod \QQ [t]_{d'-1},$ so $P(T(E))\in \QQ [t]_{d'-1}$, which
shows that $T(E)$ has dimension $\le d'-1$. 

\medskip

If $E\in \Coh ^L_d(Y)$ is semistable in $\Coh ^L_{d,d'}(Y)$ then there exists a Jordan--H\"older filtration
$$0=E_0\subset E_1\subset ...\subset E_m=E$$
by $L$-submodules such that $E_i/E_{i-1}$ have the same normalized Hilbert polynomial $p_{d,d'}$ as $E$. The associated graded $\Gr ^{JH}(E)=\bigoplus E_{i}/E_{i-1} $ is polystable in $\Coh ^L_{d,d'}(Y)$  and its class in $\Coh ^L_{d,d'}(Y)$
independent of the choice of a Jordan--H\"older filtration of $E$.

We say that two semistable objects $E$ and $E'$ of $\Coh ^L_{d,d'}(Y)$ are \emph{S-equivalent} if their
associated graded polystable objects $\Gr ^{JH}(E)$ and $\Gr ^{JH}(E')$ are isomorphic in  $\Coh ^L_{d,d'}(Y)$.

\subsection{Basic definitions}

\begin{Definition}\label{semistable-filtr}
	Let $E\in \Coh ^L_d(Y)$. We say that a filtration
	$$0=F_0E\subset F_1E\subset ...\subset F_mE=E$$
by $L$-submodules is \emph{$d'$-semistable} if all the quotients $F_iE/F_{i-1}E$ are 
semistable in $\Coh ^L_{d,d'}(Y)$ with $p_{d,d'}(E_i/E_{i-1})$ equal to either $0$ or $p_{d,d'} (E)$.
We say that $F_{\bullet }E$ is  \emph{$d'$-stable}, if it is $d'$-semistable and  
all quotients $F_iE/F_{i-1}E$ are  stable in $\Coh ^L_{d,d'}(Y)$.
\end{Definition}

Clearly, if $E$ admits a $d'$-semistable filtration then it is semistable in $\Coh ^L_{d,d'}(Y)$
and any $d'$-semistable filtration can be refined to a $d'$-stable filtration.
A $d'$-stable filtration generalizes slightly the notion of a Jordan--H\"older filtration. In particular, a filtration $F_{\bullet} E$ is $d'$-stable if and only if the associated graded $\Gr ^{F}(E)$ is isomorphic to $\Gr ^{JH}(E)$ in  $\Coh ^L_{d,d'}(Y)$. 
The important difference is that we allow quotients to be stable in  $\Coh ^L_{d,d'}(Y)$ but with the zero
normalized Hilbert polynomial  $p_{d,d'}$.  So quotients in a $d'$-(semi)stable filtration can contain torsion (or be torsion) even if $E$ is torsion free as an $\cO_Y$-module.

Any $d'$-stable filtration can be refined to a $d'$-stable filtration whose quotients are of dimension $<d'$ or pure on $Y$ but we will also use more general $d'$-stable filtrations.

\begin{Definition}\label{strongly-S-equivalent}
We say that $E\in \Coh ^L_d(Y)$ and $E'\in \Coh ^L_d(Y)$ are \emph{strongly S-equi\-va\-lent in $\Coh ^L_{d,d'}(Y)$} if there exist $d'$-semistable filtrations $F_{\bullet }E$ and $F'_{\bullet} E'$ whose quotients 
are isomorphic up to a permutation, i.e., if both filtrations have the same length $m$ and there exists a permutation $\sigma$ of $\{1,..., m\}$ such that for all $i=1,...,m$ the $L$-modules $F_{i}E/F_{i-1}E$ and $F'_{\sigma(i)}E'/F'_{\sigma (i-1)}E'$ are isomorphic (on $Y$). In this case we write $E\simeq_{d'}E'$.
\end{Definition}

Note that if $E$ and $E'$ are strongly S-equivalent in $\Coh ^L_{d,d'}(Y)$ then the associated graded objects $\Gr ^{F}(E)$ and $\Gr ^{F'}(E')$ are isomorphic in  $\Coh ^L_{d,d'}(Y)$ and after possibly refining the filtrations they are isomorphic to $\Gr ^{JH}(E)$ in  $\Coh ^L_{d,d'}(Y)$. In particular, $E$ and $E'$ are S-equivalent in $\Coh ^L_{d,d'}(Y)$.
However, the opposite implication is false even if $L$ is a trivial Lie algebroid. For example,
Hilbert polynomials of  strongly S-equivalent modules are equal but this does not  need to be true for S-equivalent modules. Even if the Hilbert polynomials of S-equivalent $L$-modules are equal, they do not need to be strongly S-equivalent (see Section \ref{surfaces}). 

\medskip

The following lemma gives a convenient reformulation of Definition \ref{strongly-S-equivalent}.

\begin{Lemma} \label{cor-Langton}
	Let $E, E'\in \Coh ^L_{d}(Y) $ be semistable of dimension $d$ in $\Coh ^L_{d,d'}(Y)$.  Then the following conditions are equivalent:
	\begin{enumerate}
		\item $E$ and $E'$ are strongly S-equivalent in $\Coh ^L_{d,d'}(Y)$.
		\item There exist $d'$-semistable filtrations $F_{\bullet }E$ and $F'_{\bullet} E'$ such that $\Gr ^{F}(E)\simeq \Gr ^{F'}(E')$.
	\end{enumerate}
\end{Lemma}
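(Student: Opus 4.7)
My plan is to establish each direction in turn. The implication (1) $\Rightarrow$ (2) follows immediately: if $F_\bullet E$ and $F'_\bullet E'$ are $d'$-semistable filtrations of the same length $m$ and $\sigma$ is a permutation of $\{1,\ldots,m\}$ witnessing strong S-equivalence, then summing the isomorphisms $F_iE/F_{i-1}E\simeq F'_{\sigma(i)}E'/F'_{\sigma(i)-1}E'$ produces
\[ \Gr^{F}(E)=\bigoplus_{i}F_iE/F_{i-1}E\simeq\bigoplus_{i}F'_{\sigma(i)}E'/F'_{\sigma(i)-1}E'=\Gr^{F'}(E'). \]

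For the harder direction (2) $\Rightarrow$ (1), my strategy is to refine the given filtrations into $d'$-semistable filtrations whose successive quotients are indecomposable, and then to invoke the Krull--Schmidt theorem to match summands on both sides. First I would verify that Krull--Schmidt applies in the category of coherent $L$-modules on $Y$: for any such $M$ the endomorphism ring $\End_L(M)$ sits as a subring inside the finite-dimensional $k$-algebra $H^0(Y,\cEnd_{\cO_Y}(M))$, hence is itself finite-dimensional and artinian; for indecomposable $M$ it has no nontrivial idempotents and is therefore local. Existence of a finite decomposition into indecomposables follows from noetherianity of the lattice of $L$-submodules, so Krull--Schmidt uniqueness applies.

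With this in hand I would refine $F_\bullet E$ as follows. Apply Krull--Schmidt to decompose each quotient $F_iE/F_{i-1}E=A_{i,1}\oplus\cdots\oplus A_{i,k_i}$ into indecomposable $L$-modules, and then insert in the step $F_{i-1}E\subset F_iE$ the preimages under $F_iE\twoheadrightarrow F_iE/F_{i-1}E$ of the partial sums $A_{i,1}\oplus\cdots\oplus A_{i,j}$. This produces a refined filtration whose successive quotients are precisely the $A_{i,j}$, and a parallel construction is performed on $F'_\bullet E'$. The next task is to check that these refinements remain $d'$-semistable: each $A_{i,j}$ is a direct summand of a semistable object in $\Coh^L_{d,d'}(Y)$ with normalized Hilbert polynomial in $\{0, p_{d,d'}(E)\}$, so by purity of the ambient quotient it must itself have dimension either $<d'$ or equal to $d$, and in the latter case $p_{d,d'}(A_{i,j})=p_{d,d'}(E)$; semistability of $A_{i,j}$ then follows from semistability of the ambient module together with the fact that every subobject of $A_{i,j}$ is already a subobject of $F_iE/F_{i-1}E$.

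Finally, I would conclude by observing that the multiset of successive quotients of the refined filtration of $E$ realizes the Krull--Schmidt decomposition of $\Gr^F(E)$, and likewise for $E'$. The hypothesis $\Gr^F(E)\simeq\Gr^{F'}(E')$ combined with uniqueness of Krull--Schmidt decompositions forces the two multisets of isomorphism classes to coincide, yielding the permutation required for strong S-equivalence. The most delicate step is checking the applicability of Krull--Schmidt in this $L$-module setting together with the bookkeeping that indecomposable direct summands of the semistable quotients inherit the correct normalized Hilbert polynomial; the refinement procedure itself and the final matching of summands are then formal.
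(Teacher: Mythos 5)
Your proof is correct and follows essentially the same route as the paper: decompose the graded quotients into indecomposable $L$-modules, refine the filtrations accordingly, and invoke the Krull--Remak--Schmidt theorem (the paper cites Atiyah's version) to match the quotients up to permutation. The extra details you supply (finite-dimensionality of endomorphism rings and the check that indecomposable summands inherit $d'$-semistability) are exactly the points the paper leaves implicit.
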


\begin{proof}
	The implication $(1)\Rightarrow (2)$ follows immediately from the definition.  Now let us assume that $(2)$ is satisfied.
	Let us decompose  $\Gr ^{F}(E)$ and $\Gr ^{F'}(E')$ into a direct sum of irreducible $L$-modules. 
	These decompositions induce  $d'$-semistable refinements of the original filtrations.
	But by the Krull--Remak--Schmidt theorem (see \cite[Theorem 2]{At}) we can find isomorphisms between the direct factors
	of the decomposition, so quotients of the refined filtrations are isomorpic up to a permutation.  
\end{proof}

\subsection{Properties of strong S-equivalence}

\begin{Lemma}\label{torsion-equivalence}
Let $E$ be an $L$-module, coherent as an $\cO_Y$-module.
Then any two filtrations of $E$ by $L$-submodules can be refined to filtrations by $L$-submodules, whose quotients are isomorphic up to a permutation.
\end{Lemma}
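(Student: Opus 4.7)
The plan is to follow the classical Schreier refinement theorem for subnormal series, transplanted to the abelian category of $L$-modules coherent as $\cO_Y$-modules. The engine is the Zassenhaus (butterfly) lemma, which works in any abelian category since it rests only on the formation of sums and intersections of subobjects together with the third isomorphism theorem.

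Given two filtrations $F_\bullet E$ and $F'_\bullet E$ of $E$ by $L$-submodules, of lengths $m$ and $n$ respectively, I would interleave them by setting, for $1\le i\le m$ and $0\le j\le n$,
$$G_{i,j} = F_{i-1}E + (F_iE \cap F'_jE), \qquad G'_{j,i} = F'_{j-1}E + (F'_jE \cap F_iE).$$
The key point is that sums and intersections of $L$-submodules are again $L$-submodules (they are the image and the kernel of maps of $L$-modules), so each $G_{i,j}$ and $G'_{j,i}$ is an $L$-submodule of $E$, automatically coherent over $\cO_Y$. Since $G_{i,0}=F_{i-1}E$ and $G_{i,n}=F_iE$, stringing the $G_{i,j}$ together lexicographically in $(i,j)$ yields an $L$-filtration of length $mn$ refining $F_\bullet E$, and similarly the $G'_{j,i}$ give a refinement of $F'_\bullet E$.

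The Zassenhaus lemma, applied to the pairs $F_{i-1}E\subset F_iE$ and $F'_{j-1}E\subset F'_jE$, then yields canonical isomorphisms of $L$-modules
$$G_{i,j}/G_{i,j-1} \;\cong\; \frac{F_iE \cap F'_jE}{(F_{i-1}E \cap F'_jE) + (F_iE \cap F'_{j-1}E)} \;\cong\; G'_{j,i}/G'_{j,i-1}.$$
Since the middle expression is manifestly symmetric in the roles of $F_\bullet$ and $F'_\bullet$, the bijection $(i,j)\leftrightarrow (j,i)$ on index pairs supplies the required permutation matching the $mn$ successive quotients of the two refined filtrations.

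The argument is essentially formal; the only thing to verify is that the standard proof of the Zassenhaus lemma, which relies purely on the sum/intersection formalism and the third isomorphism theorem, transports verbatim to the category of $L$-modules. This is routine, so I do not anticipate any genuine obstacle.
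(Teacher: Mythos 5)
Your proof is correct, but it follows a genuinely different route from the paper's. You run the classical Schreier refinement argument directly: interleave the two filtrations via $G_{i,j}=F_{i-1}E+(F_iE\cap F'_jE)$ and invoke the Zassenhaus butterfly lemma, which indeed holds verbatim in the abelian category of $L$-modules coherent over $\cO_Y$ (sums and intersections of $L$-submodules are $L$-submodules, and the relevant isomorphisms are the standard ones). This gives two refinements of equal length $mn$ whose quotients are matched by the explicit permutation $(i,j)\leftrightarrow(j,i)$; zero quotients on one side correspond to zero quotients on the other, so the bookkeeping is clean and fits the paper's definition of ``isomorphic up to a permutation.'' The paper instead argues by induction on the Hilbert polynomial (ordered lexicographically): it restricts both filtrations to $F_1E$ and pushes them to $E/F_1E$, applies the inductive hypothesis to these smaller modules, and lifts the resulting matched refinements back to refinements of $F_\bullet E$ and $G_\bullet E$. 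That lifting step, especially for the second filtration $G_\bullet E$, is itself a disguised Zassenhaus-type computation, so the two arguments have the same combinatorial core; your version makes the permutation and the isomorphisms completely explicit and avoids the induction, at the cost of setting up the double-indexed chains, while the paper's version is shorter on the page but leaves more to the reader. Either proof is acceptable.
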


\begin{proof}
Let us consider a natural ordering on the polynomials $P\in \QQ [T]$ given by the lexicographic order of their coefficients.  Assume that for any $L$-module $\ti E$ with Hilbert polynomial $P (\tilde E)<P (E)$ and for any two  filtrations of $\ti E$ we can find refinements to filtrations, whose quotients are isomorphic up to a permutation.  

Let $F_{\bullet} E$ and $G_{\bullet} E$ be two filtrations of $E$ by $L$-submodules of lengths $m$ and $m'$, respectively. It is sufficient to show that these filtrations can be refined so that quotients of the refined filtrations are isomorphic up to a permutation (note that this sort of induction  works because $P(E)\ge 0$ for any $L$-module $E$; it is a mixture of the induction on the dimension of the support and multiplicity of an $L$-module). 

The filtrations $F_{\bullet} E$ and $G_{\bullet} E$ induce the filtrations on $F_1E$ and $E/F_1E$.
By assumption these induced filtrations can be refined to filtrations, whose quotients are isomorphic up to a permutation. But these filtrations induce refinements of the filtrations $F_{\bullet} E$ and $G_{\bullet} E$,
which proves our claim.
\end{proof}

\medskip

To simplify notation we say that $E$ is \emph{$d'$-refinable} if any two $d'$-semistable filtrations of $E$ can be refined to $d'$-semistable  filtrations, whose quotients are isomorphic up to a permutation.

\begin{Proposition}\label{equivalence-lemma}
Any  $L$-module $E\in  \Coh ^L_d(Y)$ is $d'$-refinable.
\end{Proposition}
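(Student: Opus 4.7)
My plan is to induct on the Hilbert polynomial $P(E)$ in the lexicographic ordering of coefficients, in the same spirit as Lemma \ref{torsion-equivalence}. Two cases are immediate: if $\dim E<d'$ then every filtration of $E$ by $L$-submodules has quotients in dimension $<d'$, hence is automatically $d'$-semistable, and the claim follows from Lemma \ref{torsion-equivalence}; if $d'\le\dim E<d$ then $E$ admits no $d'$-semistable filtrations (since this would force $E$ itself to have dimension $<d'$, a contradiction) and the statement is vacuous. Hence I focus on the substantive case $\dim E=d$ with $E$ semistable in $\Coh^L_{d,d'}(Y)$.

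Given two $d'$-semistable filtrations $F_\bullet E$ and $G_\bullet E$, I proceed in two stages. First I refine each filtration to a $d'$-stable one using the Jordan--H\"older theorem inside the abelian subcategory of $\Coh^L_{d,d'}(Y)$ consisting of semistable objects with normalized Hilbert polynomial $p_{d,d'}(E)$ (the analogue of \cite[Propositions~1.2.6 and 1.2.7]{HL} for $\Coh^L_{d,d'}(Y)$), combined with the extraction of the maximal dimension-$<d'$ $L$-submodule from each stable quotient of dimension $d$. After this step, every quotient of either refined filtration is either of dimension $<d'$ or pure stable in $\Coh^L_{d,d'}(Y)$ of dimension $d$ with $p_{d,d'}=p_{d,d'}(E)$, and Jordan--H\"older inside the abelian subcategory shows that the collections of pure stable quotients of the two refinements are in bijection up to permutation as objects of $\Coh^L_{d,d'}(Y)$.

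Second, I upgrade this bijection in the quotient category to an isomorphism of $L$-modules on $Y$. The key observation is that two pure $L$-modules $A,B$ of dimension $d$ with $p_{d,d'}(A)=p_{d,d'}(B)=p_{d,d'}(E)$ that are isomorphic in $\Coh^L_{d,d'}(Y)$ admit embeddings of a common $L$-module $C$ into each, with $A/C$ and $B/C$ of dimension $<d'$ (this is the content of the definition of morphism in the quotient category together with purity). Inserting the preimages of such $C$ into each filtration refines both $d'$-semistably, and after doing this for each matched pair the pure stable quotients become literally isomorphic on $Y$ up to permutation. The remaining quotients are then all of dimension $<d'$ and have equal total Hilbert polynomial on both sides, so a piecewise application of Lemma \ref{torsion-equivalence} aligns them into isomorphic quotients up to permutation and produces the desired common $d'$-semistable refinement.

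The main obstacle I expect is the bookkeeping needed to maintain $d'$-semistability at every intermediate step. Subobjects of semistable objects in $\Coh^L_{d,d'}(Y)$ are not in general semistable, so I cannot simply intersect and hope for the best; it is the abelian-subcategory structure of the semistables with $p_{d,d'}=p_{d,d'}(E)$, combined with systematic extraction of dimension-$<d'$ torsion at each insertion, that forces every refinement to remain $d'$-semistable. A secondary difficulty is that the dimension-$<d'$ torsion quotients of the two refined filtrations do not naturally form filtrations of a single fixed module, so Lemma \ref{torsion-equivalence} must be applied after first sliding low-dimensional quotients past the adjacent pure stable ones using the common-core construction, which is legitimate precisely because $d'$-semistability is preserved by such swaps.
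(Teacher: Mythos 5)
Your reduction to the case of $E$ of dimension $d$ and the use of Jordan--H\"older uniqueness in the quotient category to match the dimension-$d$ stable quotients up to permutation \emph{as objects of $\Coh^L_{d,d'}(Y)$} are fine, but the second stage of your argument has a genuine gap, and it sits exactly where the content of the proposition lies: upgrading isomorphisms in $\Coh^L_{d,d'}(Y)$ to isomorphisms of $L$-modules on $Y$, including the dimension-$<d'$ quotients. Two steps fail. First, ``sliding low-dimensional quotients past the adjacent pure stable ones'' is not an operation that exists: if $F_{j-1}E\subset F_jE\subset F_{j+1}E$ realizes an extension $0\to P\to M\to Q\to 0$, there is in general no refinement of the same filtration exhibiting $Q$ below $P$ (extensions do not reverse), and preservation of $d'$-semistability is irrelevant to whether the swap is possible at all. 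Second, even if the low-dimensional pieces could be collected together, Lemma \ref{torsion-equivalence} applies to two filtrations of a \emph{single} module, whereas your leftover dimension-$<d'$ quotients on the two sides are a priori unrelated modules sitting in unrelated positions; equality of their total Hilbert polynomials is far too weak to conclude anything ($\cO_p$ and $\cO_q$ for distinct points have equal Hilbert polynomials, are both semistable with $p_{d,d'}=0$, and admit no refinements with isomorphic quotients). That the low-dimensional data actually matches is precisely the hard part of the statement --- compare Lemma \ref{independence}, where already on a surface one needs the full length function $l_E$, not just a Hilbert polynomial. A smaller issue: your ``common core'' $C$ of a pair $A,B$ isomorphic in $\Coh^L_{d,d'}(Y)$ is naturally a common \emph{subquotient} (the roof $A\leftarrow C'\rightarrow B$ need not have injective legs even after purifying), so the claim that it embeds in both requires an argument you do not give.

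The paper's proof avoids all of this by never leaving the module $E$: after reducing to $E$ pure of dimension $d$ and both filtrations $d'$-stable, it inducts on the multiplicity $\alpha_d(E)$. One locates the unique quotient $G_iE/G_{i-1}E$ into which $F_1E$ injects with cokernel of dimension $\le d'-1$, deduces that $G_{i-1}E\oplus F_1E\hookrightarrow G_iE$ and hence that $G_{i-1}E\hookrightarrow E/F_1E$, and then compares the two induced $d'$-stable filtrations of $\ti E=E/F_1E$. The crucial point is that the low-dimensional discrepancy $\coker(G_{i-1}E\oplus F_1E\to G_iE)$ is not discarded or matched externally: it reappears as a quotient of the induced $G'$-filtration of the single module $\ti E$, so the induction hypothesis applied to $\ti E$ matches it, together with all remaining quotients, as honest $L$-modules. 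This is the mechanism your proposal is missing, and without it (or a substitute) the argument cannot be completed.
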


\begin{proof}
Let us consider a short exact sequence
$$0\to T(E)\to E\to E/T(E)\to 0.$$
$d'$-semistable filtrations of $E$ induce $d'$-semistable filtrations of $T(E)$ and $E/T(E)$. By
Lemma \ref{torsion-equivalence} the filtrations of $T(E)$ can be refined to filtrations by $L$-submodules, whose quotients are isomorphic up to a permutation. Such filtrations are automatically $d'$-semistable, so 
$T(E)$ is $d'$-refinable and it is sufficient to prove that $E/T(E)$ is $d'$-refinable
and then use the corresponding filtrations to obtain the required filtrations of $E$.

So in the following we can assume that $E$ is pure of dimension $d$ on $Y$. Suitably refining the filtrations we can also assume that they are $d'$-stable.
Let  $F_{\bullet} E$ and $G_{\bullet} E$ be $d'$-stable filtrations of $E$ and assume that any $L$-module $\ti E$ pure of dimension $d$ with multiplicity $\alpha _d (\tilde E)<\alpha _d (E)$ is $d'$-refinable.

Since $E$ is pure of dimension $d$, $F_1E$ is also pure of dimension $d$ on $Y$. 
Let us take the minimal $i$ such that $F_1E\subset G_iE$ and the composition $F_1E\to G_iE\to G_iE/G_{i-1}E$ is non-zero in dimension $d$ (clearly, such $i$ must exist as one can see starting from $i=m'$ and going down). Since $G_iE/G_{i-1}E$ is pure in  $\Coh ^L_{d,d'}(Y)$  and both $F_1E$ and $G_iE/G_{i-1}E$ are 
stable in $\Coh ^L_{d,d'}(Y)$ with the same normalized Hilbert polynomial $p_{d,d'}$, the map 
$F_1E\to G_iE/G_{i-1}E$ is an inclusion and an isomorphism in dimension $d'$.
So there exists a closed subset $Z\subset Y$ of dimension $\le d'-1$ such that 
the canonical map 
$$G_{i-1}E\oplus F_1E\to  G_{i}E$$
is an isomorphism on the open subset $U=Y-Z\subset Y$. Since $G_{i-1}E\oplus F_1E$ is pure of dimension $d$, this map is also injective. Therefore the composition $G_{i-1}E\to E\to E/F_1E$ is also injective.

The quotient $\tilde E= E/F_1E$ has two natural $d'$-stable filtrations induced from $E$.
The first one is defined by  $F'_{j}\ti E=F_{j+1}E/F_1E$ and
the second one by $$G'_{j}\ti E= \im ( G_{j}E\to E\to E/F_1E ).$$
Note that both filtrations are $d'$-stable. This is clear for $F'_{\bullet} \ti E$. 
Since $G_{i-1}E\to E/F_1E$ is injective
we have 
$$G'_{j}\ti E/G'_{j-1}\ti E\simeq G_jE/G_{j-1}E$$
for $j<i$. Since $F_1E\subset G_iE$, we also have 
$$G'_{j}\ti E/G'_{j-1}\ti E\simeq G_jE/G_{j-1}E$$
for $j>i$. Finally, $G'_{i}\ti E/G'_{i-1}\ti E$ is isomorphic to the cokernel of $G_{i-1}E\oplus F_1E\to  G_{i}E$, so it is either $0$ or of dimension $\le d'-1$, which proves that the filtration $G'_{\bullet} \ti E$
is $d'$-stable. Therefore we can apply the induction assumption to $\tilde E$ and then lift the corresponding filtrations to the required filtrations of $E$.
\end{proof}

\begin{Corollary}\label{equivalence-relation}
	Strong S-equivalence in $\Coh ^L_{d,d'}(Y)$ is an equivalence relation on  $\Coh ^L_{d}(Y)$.
\end{Corollary}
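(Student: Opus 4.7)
Reflexivity is witnessed by the trivial filtration $0\subset E$, and symmetry is built into Definition \ref{strongly-S-equivalent}, since the relation ``quotients agree up to a permutation'' is symmetric in the two filtrations. So the entire content is transitivity, and the plan is to reduce it to Proposition \ref{equivalence-lemma}.

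Suppose $E\simeq_{d'}E'$ via $d'$-semistable filtrations $F_{\bullet}E$ and $F'_{\bullet}E'$ with a permutation $\sigma$ matching quotients, and $E'\simeq_{d'}E''$ via $d'$-semistable filtrations $G'_{\bullet}E'$ and $G''_{\bullet}E''$ with a permutation $\tau$ matching quotients. The object $E'$ now carries two $d'$-semistable filtrations $F'_{\bullet}E'$ and $G'_{\bullet}E'$, so by Proposition \ref{equivalence-lemma} we can refine them to $d'$-semistable filtrations $\tilde F'_{\bullet}E'$ and $\tilde G'_{\bullet}E'$ whose quotients are isomorphic up to a permutation.

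The heart of the argument is to transport these refinements back to $E$ and $E''$. A refinement of $F'_{\bullet}E'$ is the same datum as a filtration by $L$-submodules of each successive quotient $F'_iE'/F'_{i-1}E'$, and these filtrations of the quotients can be lifted to $E$ through the isomorphisms $F_{\sigma^{-1}(i)}E/F_{\sigma^{-1}(i)-1}E\simeq F'_iE'/F'_{i-1}E'$, since a filtration of a quotient lifts via preimages to a filtration of the corresponding piece of $F_{\bullet}E$. Splicing these piece-wise refinements together yields a refinement $\tilde F_{\bullet}E$ of $F_{\bullet}E$ whose successive quotients are, up to a permutation, precisely those of $\tilde F'_{\bullet}E'$. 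In particular $\tilde F_{\bullet}E$ is $d'$-semistable (the $d'$-semistability condition depends only on the isomorphism classes of the quotients). The same construction applied to $G'_{\bullet}E'$ via $\tau$ produces a $d'$-semistable refinement $\tilde G''_{\bullet}E''$ of $G''_{\bullet}E''$ whose quotients match those of $\tilde G'_{\bullet}E'$ up to a permutation.

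Combining the three permutation matchings (quotients of $\tilde F_{\bullet}E$ with quotients of $\tilde F'_{\bullet}E'$, quotients of $\tilde F'_{\bullet}E'$ with quotients of $\tilde G'_{\bullet}E'$ via Proposition \ref{equivalence-lemma}, and quotients of $\tilde G'_{\bullet}E'$ with quotients of $\tilde G''_{\bullet}E''$) shows that $\tilde F_{\bullet}E$ and $\tilde G''_{\bullet}E''$ are $d'$-semistable filtrations of $E$ and $E''$ with quotients isomorphic up to a permutation, so $E\simeq_{d'}E''$. The only non-cosmetic step is Proposition \ref{equivalence-lemma}, which has already been established; the remaining care is purely bookkeeping, namely ensuring that the lifted refinements retain the $d'$-semistability property, which is automatic once one tracks isomorphism classes of the successive quotients.
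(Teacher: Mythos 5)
Your proof is correct and follows the same route as the paper: transitivity is reduced to Proposition \ref{equivalence-lemma} applied to the two $d'$-semistable filtrations of the middle object $E'$, and the resulting refinements are transported back to $E$ and $E''$ through the given isomorphisms of quotients. The paper's own proof is a terser version of exactly this argument; your extra bookkeeping on lifting refinements of quotients via preimages is the content it leaves implicit.
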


\begin{proof}
To simplify notation we say that filtrations satisfy condition $(*)$ if  their quotients 
are isomorphic up to a permutation (see Definition \ref{strongly-S-equivalent}).
Let us consider $E, E', E''\in  \Coh ^L_{d}(Y)$ and assume that $E\simeq_{d'}E'$
and $E'\simeq_{d'}E''$.
Then $E$ and $E'$ have the filtrations satisfying $(*)$, and $E'$ and $E''$ have the filtrations satisfying  $(*)$. 
By Proposition \ref{equivalence-lemma} the filtrations of $E'$ can be refined to filtrations satisfying 
condition $(*)$. But these refined filtrations induce filtrations of $E$ and $E''$ 
that  satisfy condition $(*)$, so $E\simeq_{d'}E''$. The remaining conditions are obvious.
\end{proof}

\medskip

Thanks to the above corollary one can talk about strong S-equivalence classes of (semistable) $L$-modules.

\begin{Lemma}\label{S-equivalence-on-sequences}
	Let us fix $P\in \QQ [T]_{d, d'}$. Let 
	$$0\to E'\to E\to E''\to 0$$
	and 
	$$0\to \ti E'\to \ti E\to \ti E''\to 0$$
	be short exact sequences of $L$-modules in $ \Coh ^L_d(Y)$ with normalized Hilbert polynomials $p_{d,d'}$ equal to either $0$ or $P$. 
	\begin{enumerate}
		\item If $E'\simeq_{d'} \ti E'$ and $E''\simeq_{d'} \ti E''$ then $E\simeq_{d'} \ti E$.
		\item If $E\simeq_{d'} \ti E$ and $E''\simeq_{d'} \ti E''$ then $E'\simeq_{d'} \ti E'$.
		\item If $E\simeq_{d'} \ti E$ and $E'\simeq_{d'} \ti E'$ then $E''\simeq_{d'} \ti E''$.
	\end{enumerate}
\end{Lemma}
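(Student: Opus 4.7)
My plan is to reduce all three claims of the lemma to a single additivity identity for a multiset invariant, combined with multiset cancellation.

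First I would attach to every $M\in\Coh^L_d(Y)$ admitting a $d'$-semistable filtration a multiset invariant $[M]$, defined as the multiset of isomorphism classes of quotients in a maximally refined $d'$-semistable filtration of $M$. Using Proposition \ref{equivalence-lemma} (any two $d'$-semistable filtrations admit a common refinement with matching quotients up to permutation), two maximally refined $d'$-semistable filtrations of $M$ are their own common refinement and hence have matching quotients, so $[M]$ is well defined. A short argument (a direct summand of dimension $d$ would contradict stability in $\Coh^L_{d,d'}(Y)$, and any dimension-$<d'$ direct summand can be split off and decomposed further), together with the Krull--Remak--Schmidt theorem \cite{At}, shows that each entry of $[M]$ is an indecomposable $L$-module, and $M\simeq_{d'}M'$ iff $[M]=[M']$.

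Part (1) is then proved by explicit concatenation, which simultaneously establishes the additivity $[E]=[E']+[E'']$. Choose $d'$-semistable filtrations $F'_\bullet E'$ and $G''_\bullet E''$ (which exist by the assumed strong S-equivalences) and define a filtration of $E$ by placing $F'_\bullet E'$ at the bottom and the pullback $\pi^{-1}(G''_\bullet E'')$ on top, where $\pi\colon E\to E''$. Its quotients are the concatenation of the two given lists, and $d'$-semistability reduces to checking $p_{d,d'}(\text{quotient})\in\{0,p_{d,d'}(E)\}$: this is immediate when $p_{d,d'}(E)=P$, and when $p_{d,d'}(E)=0$ we have $\dim E<d$, so $\dim E',\dim E''<d$ and every quotient has $p_{d,d'}=0$. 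The same construction on $\ti E$ yields matching quotients, so $E\simeq_{d'}\ti E$.

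For parts (2) and (3) the conclusion follows from additivity by multiset subtraction: in part (2), $[E]=[\ti E]$ and $[E'']=[\ti E'']$ force $[E']=[\ti E']$, and symmetrically for part (3). The main obstacle is to exhibit $d'$-semistable filtrations of $E'$ and $\ti E'$ in part (2), and of $E''$ and $\ti E''$ in part (3), so that $[E'],[\ti E']$ (resp.\ $[E''],[\ti E'']$) are even defined. For part (2) I would start from $d'$-semistable filtrations $H_\bullet E$ of $E$ and $F''_\bullet E''$ of $E''$, refine $H_\bullet E$ to $H'_\bullet E$ so that $\pi(H'_\bullet E)$ refines $F''_\bullet E''$, and then show that the restriction $H'_\bullet E\cap E'$ is $d'$-semistable; the hypothesis $p_{d,d'}\in\{0,P\}$ is what prevents intermediate-slope pieces from appearing under restriction, as such pieces would force dimension in the range $[d',d-1]$ which the hypothesis rules out. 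Part (3) is the dual construction, projecting $H'_\bullet E$ along $\pi$ rather than intersecting with $E'$, and the resulting three-term filtration concretely witnesses the additivity $[E]=[E']+[E'']$ needed for the cancellation.
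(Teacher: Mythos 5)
Your part (1) is essentially the paper's argument and is fine: pull back a filtration of $E''$ along $E\to E''$, place a filtration of $E'$ underneath, and observe that the quotient list of the concatenation is the union of the two quotient lists. The problem is the cancellation step for parts (2) and (3), which rests on an invariant $[M]$ that does not exist: maximally refined $d'$-semistable filtrations need not exist, because the refinement process need not terminate. Take $Y$ a smooth projective surface, $L=\cO_Y$, $d=2$, $d'=1$ (the slope case, which is precisely where the lemma is applied in Section \ref{surfaces}): the filtration $0\subset I_x\subset \cO_Y$ is $1$-stable (the quotients $I_x$ and $\cO_x$ are stable in $\Coh^L_{2,1}(Y)$ with $p_{2,1}$ equal to $p_{2,1}(\cO_Y)$ and $0$, respectively), and it can be refined again by inserting $I_{\{x,y\}}\subset I_x$, and so on indefinitely; the paper stresses exactly this feature (quotients of a $d'$-stable filtration may acquire torsion even when $E$ is torsion free), and it is the whole point of the Donaldson--Uhlenbeck discussion. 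For the same reason your claim that $M\simeq_{d'}M'$ if and only if the multisets of Krull--Remak--Schmidt indecomposable constituents agree is false: $\cO_Y$ and $I_x\oplus\cO_x$ are strongly S-equivalent in $\Coh^L_{2,1}(Y)$ (both admit a filtration with quotients $I_x,\cO_x$), yet $\cO_Y$ is indecomposable while $I_x\oplus\cO_x$ is not. So there is no well-defined multiset in which to perform the subtraction, and the crux of (2) and (3) is missing.

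The paper's proof of (2) avoids any canonical invariant: it pulls back matching filtrations of $E''$ and $\ti E''$ to filtrations of $E$ and $\ti E$ whose \emph{first step} is $E'$ (resp.\ $\ti E'$), so that these pullbacks are automatically $d'$-semistable and restrict tautologically to $E'$; it then applies Proposition \ref{equivalence-lemma} to these filtrations together with the hypothesis $E\simeq_{d'}\ti E$ to get common refinements with matching quotients, whose bottom parts are the desired filtrations of $E'$ and $\ti E'$. Note also that your step ``show that the restriction $H'_\bullet E\cap E'$ is $d'$-semistable'' is not justified as stated: intersecting an arbitrary $d'$-semistable filtration of $E$ with $E'$ can produce quotients that are unstable or of dimension strictly between $d'-1$ and $d$, and the hypothesis $p_{d,d'}\in\{0,P\}$ constrains $E'$ and $E''$, not the intersection pieces. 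Arranging from the outset for $E'$ to be a term of the filtration is what makes the restriction harmless, and the cancellation is then performed on the two specific matching filtrations rather than on a purported universal multiset.
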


\begin{proof}
The first assertion follows from the fact that a  $d'$-semistable filtration of $E$ induces  $d'$-semistable filtrations on $E'$ and $E''$ and  $d'$-semistable  refinements of these filtrations induce a  $d'$-semistable refinement of the original filtration of $E$.		

To prove the second assertion let us consider two $d'$-semistable  filtrations $F_{\bullet} E''$ and
$F_{\bullet} \ti E''$, whose quotients are isomorphic up to a permutation. 
Let $F_0 E=0$ and let $F_iE$ for $i>0$ be the preimage of $F_{i-1}E''$. This defines a $d'$-semistable
filtration  $F_{\bullet} E$ of $E$. Similarly, we can define the filtration 
$F_{\bullet} \ti E$. By Lemma \ref{equivalence-lemma} and our assumption we can find $d'$-semistable refinements of $F_{\bullet} E$ and $F_{\bullet} \ti E$ with quotients isomorphic up to a permutation.
These refinements define filtrations on $E'$ and $\ti E'$. Possibly changing the permutations we see that the quotients of these filtrations are isomorphic up to a permutation, which shows that $E'$ and $\ti E'$  are strongly S-equivalent  in $ \Coh ^L_{d,d'}(Y)$. 
The last assertion can be proven similarly to the second one.
\end{proof}

\subsection{Slope semistability on surfaces} \label{surfaces}

To better understand strong S-equivalence classes let us consider the surface case.

Let $Y$ be a smooth projective surface, $d=2$, $d'=1$ and $L=\cO_Y$ is the trivial Lie algebroid. In this case a coherent sheaf $E$ is semistable in  $\Coh ^L_{2,1}(Y)$ if and only if $E$ is slope semistable
(of dimension $2$ or $0$).

\begin{Lemma}\label{independence}
	Let $E$ be a slope semistable sheaf of dimension $2$ or $0$ on $X$. Then 
for any $1$-stable filtration $F_{\bullet}E$ the sheaf 	$({\Gr} ^{F}(E)) ^{**}$
and the function  $l_E: Y\to \ZZ_{\ge 0}$  given by 
$$l_E(y)={\mathrm {length}} \, ( \ker  ({\Gr} ^{F}(E)\to  ({\Gr} ^{F}(E))^{**}) _y + {\mathrm {length}} \, ( \coker  ({\Gr} ^{F}(E)\to  ({\Gr} ^{F}(E))^{**}) _y$$	
do not depend on the choice of the filtration.
\end{Lemma}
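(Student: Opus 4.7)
The plan is to apply Proposition \ref{equivalence-lemma} in order to reduce both invariants to the case of $1$-semistable filtrations with isomorphic graded pieces, and then to verify separately that passing from a $1$-stable filtration to any of its $1$-semistable refinements leaves both invariants unchanged.

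Given two $1$-stable filtrations $F_\bullet E$ and $G_\bullet E$, Proposition \ref{equivalence-lemma} produces $1$-semistable refinements $F'_\bullet E$ and $G'_\bullet E$ whose graded pieces agree up to a permutation, so that $\Gr^{F'}(E)\simeq\Gr^{G'}(E)$ as $\cO_Y$-modules, and hence $(\Gr^{F'}(E))^{**}\simeq(\Gr^{G'}(E))^{**}$ and $l^{F'}_E=l^{G'}_E$. By induction on the number of inserted subsheaves, it is enough to treat a single elementary refinement, which amounts to inserting one subsheaf $F_{i-1}E\subsetneq H\subsetneq F_iE$ inside a stable quotient $Q=F_iE/F_{i-1}E$ and producing an exact sequence $0\to R_1\to Q\to R_2\to 0$ with $R_1=H/F_{i-1}E$ and $R_2=F_iE/H$. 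If $Q$ is $0$-dimensional, everything in sight is $0$-dimensional and additivity of length handles both invariants. If $Q$ is $2$-dimensional and stable in $\Coh^L_{2,1}(Y)$, stability forces exactly one of $R_1$, $R_2$ to be a $2$-dimensional semistable sheaf $R$ and the other to be $0$-dimensional, by the argument used in the proof of Proposition \ref{equivalence-lemma}.

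For the double dual, the key input is the vanishing $\cExt^i(T,\cO_Y)=0$ for $i\le 1$ and any $0$-dimensional sheaf $T$ on the smooth surface $Y$. Dualising the relevant short exact sequences then shows $R^{**}\simeq Q^{**}$, and since $0$-dim summands have trivial double dual, summing over $i$ yields $(\Gr^F(E))^{**}\simeq(\Gr^{F'}(E))^{**}$, which combined with the reduction above gives the first half of the lemma.

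For the function $l_E$, one argues pointwise via additivity of Euler characteristics along $0\to R_1\to Q\to R_2\to 0$, combined with the identification $Q^{**}=R^{**}$ and the observation that the pure quotients $Q/T(Q)$ and $R/T(R)$ differ only by a $0$-dim sheaf whose length is controlled by the length of the $0$-dim piece appearing in the refinement. The hard part will be the non-split case: comparing $T(Q)$ with $T(R)$ and the $0$-dim piece requires using stability of $Q$ (and not merely semistability), in order to guarantee that any torsion that is lifted along the refinement is accompanied by a matching change in the $0$-dim cokernel of the double-dualisation map, so that the sum $\ell(\ker)_y+\ell(\coker)_y$ is unchanged at every point $y\in Y$.
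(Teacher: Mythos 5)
Your reduction steps coincide with the paper's: Proposition \ref{equivalence-lemma} reduces everything to invariance under refinement of a single stable graded piece $Q$, an elementary refinement gives $0\to R_1\to Q\to R_2\to 0$ with one term $0$-dimensional, and your identification $(\Gr^{F}(E))^{**}\simeq(\Gr^{F'}(E))^{**}$ via $\cExt^i(T,\cO_Y)=0$ for $i\le 1$ and $T$ of dimension $0$ is correct --- it is exactly what underlies the unexplained isomorphisms $E^{**}\simeq(E'')^{**}$ and $E^{**}\simeq(E')^{**}$ in the paper. The case where the $0$-dimensional piece is the sub $R_1$ also goes through: one has $0\to R_1\to T(Q)\to T(R_2)\to 0$ and $\coker(Q\to Q^{**})\simeq\coker(R_2\to R_2^{**})$, so both summands of $l$ are preserved.

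The genuine gap sits exactly where you defer to ``the hard part'', and it cannot be closed: when the $0$-dimensional piece is the quotient $R_2$ and the extension does not split modulo torsion, the sum $\mathrm{length}(\ker)_y+\mathrm{length}(\coker)_y$ is \emph{not} invariant. Take $Q=E=\cO_Y$ with the two $1$-stable filtrations $0\subset\cO_Y$ and $0\subset I_y\subset\cO_Y$, where $I_y$ is the ideal sheaf of a point; both quotients $I_y$ and $k(y)$ are stable in $\Coh^L_{2,1}(Y)$ with $p_{2,1}$ equal to $p_{2,1}(\cO_Y)$ and $0$ respectively, so both filtrations are admissible. The first gives $l_E(y)=0$; the second gives $\Gr=I_y\oplus k(y)$, $\Gr^{**}=\cO_Y$, $\ker=k(y)$, $\coker=\cO_Y/I_y$, hence $l_E(y)=2$. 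Stability of $Q$ does not help ($\cO_Y$ and $I_y$ are both stable): the part of $R_2$ that does not lift to $T(Q)$ increases the kernel \emph{and} the cokernel by the same length, so the sum jumps by twice that length, while the difference $\mathrm{length}(\coker)_y-\mathrm{length}(\ker)_y$ stays fixed, being determined by $[\Gr^{**}]-[\Gr]=[\Gr^{**}]-[E]$ and the first half of the lemma. Your plan of ``additivity of Euler characteristics'' in fact only proves invariance of this difference. Be aware that the paper's own proof has the same defect: the asserted exactness of $0\to T(E')\to T(E)\to E''\to 0$ and the isomorphism $\coker(E'\to(E')^{**})\simeq\coker(E\to E^{**})$ both require $E=E'+T(E)$ and fail for $E=\cO_Y$, $E'=I_y$. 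So either $l_E$ should be defined as the difference of the two lengths, or the admissible filtrations must be restricted; as stated, neither your argument nor the paper's establishes the second assertion.
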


\begin{proof}
If $E$ has dimension $0$ the assertion is clear as the reflexivization is trivial and $l_E(y)={\mathrm {length}} \, E_y$.  So in the following we can assume that $E$ has dimension $2$. 

In this proof we write $l_E^F$ for the function $l_E$ defined by the filtration $F_{\bullet} E$.
By Lemma \ref{equivalence-lemma} any two $1$-stable filtrations of $E$ can be refined to $1$-stable filtrations, whose quotients are isomorphic up to a permutation. So it is sufficient to prove that  if $F'_{\bullet}E$ is a refinement of a $1$-stable filtration $F_{\bullet}E$ then 
	$({\Gr} ^{F}(E)) ^{**}\simeq 	({\Gr} ^{F'}(E)) ^{**}$ and $l_E^F=l_E^{F'}$.
Then passing to the quotients, we can reduce to the situation when $F_{\bullet} E$ has length $1$, i.e.,
$E$ is  slope stable of dimension $2$ or $0$ on $X$. In the second case we already know the assertion, so we can assume that $E$ has dimension $2$. 
Let us consider a short exact sequence
$$0\to E'\to E\to E''\to 0$$
in which one of the sheaves $E'$ and $E''$ is $0$-dimensional and the other one is $2$-dimensional and slope stable. 

Let us first assume that $E'$ is $0$-dimensional. Then  $E^{**}\simeq (E'')^{**}$,
$$\coker  (E\to E^{**})\simeq \coker  (E'\to  (E'')^{**})$$
and we have  a short exact sequence
$$0\to E'\to T(E)\to T(E'')\to 0.$$
Similarly, if $E''$ is $0$-dimensional we have $E^{**}\simeq (E')^{**}$,
$$\coker  (E'\to  (E')^{**}) \simeq \coker  (E\to E^{**}) $$
and we have  a short exact sequence
$$0\to T(E')\to T(E)\to E''\to 0.$$
So we see that  $({\Gr} ^{F'}(E)) ^{**}\simeq E^{**}$ and $l_E^F=l_E^{F'}$ follows by induction on the length of the filtration $F'_{\bullet }E$.
\end{proof}

\begin{Corollary}\label{one-implication}
If $E$ and $E'$ are semistable and strongly  S-equivalent  in  $\Coh ^L_{2,1}(Y)$ then $({\Gr} ^{JH}(E) )^{**} \simeq ({\Gr} ^{JH}(E') )^{**}$ and $l_E=l_{E'}.$
\end{Corollary}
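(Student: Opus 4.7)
The plan is to combine strong S-equivalence with the independence statement from Lemma \ref{independence}. By Definition \ref{strongly-S-equivalent}, there exist $1$-semistable filtrations $F_\bullet E$ and $F'_\bullet E'$ of the same length $m$ whose quotients are isomorphic up to a permutation $\sigma$. The first step is to refine these filtrations to $1$-\emph{stable} filtrations in a compatible way: for each $i$, choose a $1$-stable refinement of the quotient $F_iE/F_{i-1}E$, and transport it through the given isomorphism $F_iE/F_{i-1}E\simeq F'_{\sigma(i)}E'/F'_{\sigma(i-1)}E'$ to obtain a $1$-stable refinement of that quotient of $E'$. Lifting these refinements to $E$ and $E'$ yields $1$-stable filtrations $\ti F_\bullet E$ and $\ti F'_\bullet E'$ whose graded pieces are still isomorphic up to a permutation.

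Since the graded pieces agree up to permutation as coherent sheaves, we immediately obtain an isomorphism $\Gr^{\ti F}(E)\simeq \Gr^{\ti F'}(E')$ of $\cO_Y$-modules. Reflexivization and the length function $l$ depend only on the underlying coherent sheaf of the associated graded, so
$$(\Gr^{\ti F}(E))^{**}\simeq (\Gr^{\ti F'}(E'))^{**}\qquad \text{and}\qquad l_E^{\ti F}=l_{E'}^{\ti F'}.$$

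Now I invoke Lemma \ref{independence}: the reflexive sheaf $(\Gr^{F}(E))^{**}$ and the function $l_E^F$ attached to a $1$-stable filtration of $E$ do not depend on the filtration. Since a Jordan--H\"older filtration is in particular a $1$-stable filtration (its quotients are stable in $\Coh^L_{2,1}(Y)$ with normalized Hilbert polynomial equal to $p_{2,1}(E)$), applying this independence to both $\ti F_\bullet E$ and a chosen Jordan--H\"older filtration of $E$ (and likewise for $E'$) gives
$$(\Gr^{JH}(E))^{**}\simeq (\Gr^{\ti F}(E))^{**}\simeq (\Gr^{\ti F'}(E'))^{**}\simeq (\Gr^{JH}(E'))^{**},$$
and the corresponding equality $l_E=l_{E'}$. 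The only delicate point is the compatible refinement in the first paragraph; this is routine because refining a filtration of a subquotient always lifts to a refinement of the ambient filtration, and the permutation $\sigma$ transports any chosen refinements on the $E$ side to matching refinements on the $E'$ side.
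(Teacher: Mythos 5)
Your proof is correct and follows essentially the same route as the paper's: pass to $1$-stable filtrations of $E$ and $E'$ whose quotients are isomorphic up to a permutation, observe that reflexivization and $l$ depend only on the associated graded as a coherent sheaf, and then invoke Lemma \ref{independence} to replace these filtrations by Jordan--H\"older filtrations. The only difference is that you spell out the compatible $1$-stable refinement step, which the paper's proof leaves implicit by starting directly from $1$-stable filtrations.
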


\begin{proof}
Assume that $E\simeq_{1}E'$ and let $F_{\bullet }E$ and $F'_{\bullet} E'$ be $1$-stable filtrations whose quotients are isomorphic up to a permutation.
Since ${\Gr} ^{F}(E)\simeq {\Gr} ^{F'}(E)$, we have $({\Gr} ^{F}(E)) ^{**}\simeq({\Gr} ^{F'}(E')) ^{**}$ and $l_E=l_{E'}$. So the corollary follows from the fact that by Lemma  \ref{independence} we have $({\Gr} ^{JH}(E)) ^{**}\simeq({\Gr} ^{F}(E)) ^{**}$ and   $({\Gr} ^{JH}(E')) ^{**}\simeq({\Gr} ^{F'}(E')) ^{**}$.
\end{proof}

\medskip

If $E$  is slope semistable and torsion free then we can find a (slope) Jordan--H\"older filtration $E^{\tf}_\bullet $ of $E$ such that the associated graded $\Gr ^{\tf}(E)$ is also torsion free.  
Then for any $y\in Y$ we have
$$l_E(y)={\mathrm {length}} \, (({\Gr} ^{\tf}(E))^{**}/ {\Gr} ^{\tf}(E)) _y ,$$
so our function $l_E$ agrees with the one from \cite[Definition 8.2.10]{HL}.

Let  $M^{\mu \mathrm {ss}}(r, \Lambda, c_2)$ be the moduli space of torsion free slope semistable sheaves $E$ of rank $r$
with $\det E\simeq \Lambda$ and $c_2(E)=c_2$ (see \cite[8.2]{HL}).
The following result shows that closed points of $M^{\mu \mathrm {ss}}(r, \Lambda, c_2)$ correspond to strong  S-equivalence classes (see \cite[Theorem 8.2.11]{HL}).

\begin{Lemma} \label{Donaldson-Uhlenbeck}
Let $E$ and $E'$ be slope semistable torsion free sheaves on $X$. Then $E$ and $E'$ are strongly 
S-equivalent  in  $\Coh ^L_{2,1}(Y)$ if and only if $({\Gr} ^{\tf}(E) )^{**} \simeq ({\Gr} ^{\tf}(E') )^{**}$ and $l_E=l_{E'}.$
\end{Lemma}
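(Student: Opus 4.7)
The plan is to handle the two directions separately using the machinery of Section \ref{strong-S-section}.

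For $(\Rightarrow)$, I would appeal directly to Corollary \ref{one-implication}, which yields $l_E = l_{E'}$ and $({\Gr}^{JH}(E))^{**}\simeq ({\Gr}^{JH}(E'))^{**}$. To translate between $\Gr^{JH}$ and $\Gr^{\tf}$ I note that a torsion-free Jordan--H\"older filtration is $1$-stable, since its quotients are slope stable torsion free and thus stable in $\Coh ^L_{2,1}(Y)$ with normalized Hilbert polynomial $p_{2,1}(E)$. Lemma \ref{independence} then gives $({\Gr}^{\tf}(E))^{**}\simeq ({\Gr}^{JH}(E))^{**}$ and similarly for $E'$, matching the statement.

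For $(\Leftarrow)$, I first reduce to the associated graded. The torsion-free JH filtration $E^{\tf}_\bullet$ of $E$ and the partial direct sum filtration of ${\Gr}^{\tf}(E)=\bigoplus Q_i$ are both $1$-semistable with ordered quotients $Q_1,\dots, Q_n$, so Definition \ref{strongly-S-equivalent} yields $E\simeq_1 {\Gr}^{\tf}(E)$, and similarly $E'\simeq_1 {\Gr}^{\tf}(E')$. By Corollary \ref{equivalence-relation} it is enough to prove ${\Gr}^{\tf}(E)\simeq_1 {\Gr}^{\tf}(E')$.

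Write $G:={\Gr}^{\tf}(E)$ and $G':={\Gr}^{\tf}(E')$, fix an isomorphism identifying $G^{**}$ and $(G')^{**}$ with a common locally free sheaf $F$, and put $E_0:=G\cap G'\subset F$. Then $E_0$ is torsion free, and $F/E_0\hookrightarrow F/G\oplus F/G'$ is $0$-dimensional. Consequently the map $E_0^{**}\to F$ is an inclusion of locally free sheaves of the same rank on the smooth surface $Y$ with $0$-dimensional cokernel, and hence an isomorphism (a non-unit non-zero-divisor in a regular two-dimensional local ring cuts out a codimension-one subscheme, so a $0$-dimensional cokernel forces the determinant of the inclusion to be a unit locally). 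Therefore $E_0^{**}=F$, and in particular $E_0$ is slope semistable with $p_{2,1}(E_0)=p_{2,1}(G)$.

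The filtrations $0\subset E_0\subset G$ and $0\subset E_0\subset G'$ are now $1$-semistable with quotients $(E_0,\, G/E_0)$ and $(E_0,\, G'/E_0)$. Refining the $0$-dimensional quotients $G/E_0$ and $G'/E_0$ by composition series into skyscrapers $k(y)$ and lifting these refinements to filtrations of $G$ and $G'$, the multiplicities of $k(y)$ become $l_G(y)-l_{E_0}(y)$ and $l_{G'}(y)-l_{E_0}(y)$ respectively. The hypothesis $l_E=l_{E'}$ together with the description of $l_E$ as the length function of $F/G$ preceding the lemma gives $l_G=l_{G'}$, so the two multisets of skyscraper quotients coincide; together with the common $E_0$ factor this establishes $G\simeq_1 G'$. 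The main obstacle I expect is the identification $E_0^{**}=F$: this is where the smooth-surface hypothesis is essential, and without it (or some substitute) the construction of a common subsheaf recovering the correct length function would not work.
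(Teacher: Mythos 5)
Your proof is correct, and the forward direction coincides with the paper's (Corollary \ref{one-implication}, with Lemma \ref{independence} supplying the translation between $\Gr^{JH}$ and $\Gr^{\tf}$). The backward direction also follows the paper's reduction to the polystable sheaves $G=\Gr^{\tf}(E)$, $G'=\Gr^{\tf}(E')$ via Corollary \ref{equivalence-relation}, but then diverges in the decomposition used: the paper compares $G$ and $G'$ ``from above,'' applying the cancellation statement (2) of Lemma \ref{S-equivalence-on-sequences} to the sequences $0\to G\to F\to F/G\to 0$ and $0\to G'\to F\to F/G'\to 0$, where $F/G\simeq_1 F/G'$ because these are $0$-dimensional of the same length at every point; you instead compare ``from below,'' introducing the common subsheaf $E_0=G\cap G'$ and exhibiting explicit filtrations $0\subset E_0\subset\cdots\subset G$ and $0\subset E_0\subset\cdots\subset G'$ with matching skyscraper quotients. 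Both work; your route trades the subtler part (2) of Lemma \ref{S-equivalence-on-sequences} for the construction of $E_0$, which is arguably more self-contained (it only unwinds the easy direction (1) of that lemma by hand). Two small remarks: the multiplicity of $k(y)$ in $G/E_0$ is $\mathrm{length}(F/E_0)_y-\mathrm{length}(F/G)_y$, the negative of what you wrote, though the conclusion that the multiplicities for $G$ and $G'$ agree when $l_G=l_{G'}$ is unaffected; and the identification $E_0^{**}=F$, which you flag as the main obstacle, is not actually needed --- semistability of $E_0$ with $p_{2,1}(E_0)=p_{2,1}(G)$ follows directly from $E_0\cong G$ in $\Coh^L_{2,1}(Y)$ (the cokernel being $0$-dimensional), and the length comparison only uses additivity of length along $E_0\subset G\subset F$ and $E_0\subset G'\subset F$.
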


\begin{proof}
One implicaton follows from Corollary \ref{one-implication}.	
To prove the other implication let us assume that  $({\Gr} ^{\tf}(E)) ^{**}\simeq ({\Gr} ^{\tf}(E') )^{**}$
and $l_E=l_{E'}$. Since $E\simeq _1{\Gr} ^{\tf}(E)$ and  $E'\simeq _1{\Gr} ^{\tf}(E')$, by Corollary \ref{equivalence-relation} it is sufficient to prove that ${\Gr} ^{\tf}(E) \simeq _1{\Gr} ^{\tf}(E')$.
So we can assume that $E$ and $E'$ are slope polystable. 
Since $E^{**}/E$ and $(E')^{**}/E'$ are $0$-dimensional sheaves of the same length at every point of $Y$, we can find the filtrations of $E^{**}/E$ and $(E')^{**}/E'$ whose quotients are isomorphic. Therefore
$E^{**}/E\simeq_1(E')^{**}/E'$. Since by assumption $E^{**}\simeq _1(E')^{**}$,
Lemma \ref{S-equivalence-on-sequences} implies that $E\simeq_1 E'$.
\end{proof}

\subsection{Slope semistability in higher dimensions}

Let $Y$ be a smooth projective variety of dimension $d$. Let us consider $d'=d-1$ and the trivial Lie algebroid  $L=\cO_Y$.
Then a coherent sheaf $E$ is semistable in   $\Coh ^L_{d,d'}(Y)$ if and only if $E$ is slope semistable
(of dimension $d$ or $\le d-2$).  

\begin{Example} \label{higher-dim-example}
Let $E$ be a slope stable vector bundle on $Y$ of dimension $\ge 3$. Let us consider the family of slope stable torsion free sheaves $\{E_y\} _{y\in Y}$ defined by $E_y=\ker (E\to E\otimes k(y))$. This can be seen to be a flat family parametrized by $Y$. Note that if $y_1$, $y_2$ are distinct $k$-points of $Y$ then $E_{y_1}$ and $E_{y_2}$ are not strongly S-equivalent in $\Coh ^L_{d,d'}(Y)$. This follows from Lemma \ref{S-equivalence-on-sequences}
and the fact that $E\otimes k(y_1) $ and $E\otimes k(y_2)$ are not strongly S-equivalent in $\Coh ^L_{d,d'}(Y)$ as they have different supports.

Note however that by \cite[Lemma 5.7]{GT} all  $E_y$ for $y\in Y(k)$ correspond to the same point in the moduli space of slope semistable sheaves constructed by D. Greb and M. Toma.
\end{Example}

\section{``Separatedness of the moduli space of semistable modules''}

In this section we fix the following notation.

Let $R$ be a discrete valuation ring with maximal ideal $m$
generated by $\pi \in R$. Let $K$ be the quotient field of $R$ and
let us assume that the residue field $k=R/m$ is algebraically
closed.

Let $X\to S=\Spec R$ be a projective morphism and let $L$
be a smooth $\cO_S$-Lie algebroid on $X$. Let us fix a 
relatively ample line bundle $\cO_X (1)$ on $X/S$. In the following 
stability of sheaves on the fibers of $X\to S$ is considered with respect to 
this fixed polarization.

\subsection{Flatness lemma}

In the proof of the main theorem of this section we need the following lemma.

\begin{Lemma}\label{hom-flatness}
	Let  $E_1$ and $E_2$ be coherent $\cO_X$-modules. If $E_2$ is flat over $S$ then 
	\begin{enumerate}
		\item the $R$-module $\Hom _{\cO_X} (E_1, E_2)$ is flat,
		\item the sheaf $\cHom _{\cO_X} (E_1, E_2)$ is flat over $S$,
		\item we have a canonical isomorphism
		$$\Hom _{\cO_X} (E_1, E_2)\otimes _RK\mathop{\longrightarrow}^{\simeq} \Hom _{\cO_{X_K}} ((E_1)_K, (E_2)_K) .$$		
	\end{enumerate}
\end{Lemma}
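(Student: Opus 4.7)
The plan is to leverage the fact that $R$ is a DVR, so a quasi-coherent $\cO_S$-module (equivalently an $R$-module) is flat if and only if multiplication by the uniformizer $\pi$ is injective. The hypothesis that $E_2$ is $S$-flat translates to the statement that the sheaf endomorphism $\pi\cdot\id: E_2\to E_2$ is injective.

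For (1) and (2), both flatness assertions reduce to showing that $\pi$ acts injectively on $\Hom_{\cO_X}(E_1,E_2)$, respectively on $\cHom_{\cO_X}(E_1,E_2)$. If $\varphi\in \Hom_{\cO_X}(E_1,E_2)$ satisfies $\pi\varphi=0$, then $\varphi$ factors through the $\pi$-torsion subsheaf of $E_2$, which is zero by flatness; hence $\varphi=0$. Exactly the same argument applied to any open $U\subset X$ (where $E_2|_U$ remains $R$-flat) shows multiplication by $\pi$ is injective on sections of $\cHom_{\cO_X}(E_1,E_2)$, giving (2). No real obstacle here.

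For (3), my plan is to use a two-step resolution of $E_1$ by sums of negative twists of $\cO_X$. Since $X$ is projective over the noetherian $S$ and $E_1$ is coherent, there exist integers $n_0,n_1,a_0,a_1$ and an exact sequence
$$\cO_X(-n_1)^{a_1}\to \cO_X(-n_0)^{a_0}\to E_1\to 0.$$
Applying the left-exact functor $\Hom_{\cO_X}(-,E_2)$ and using $\Hom_{\cO_X}(\cO_X(-n),E_2)\simeq H^0(X,E_2(n))$ yields an exact sequence
$$0\to \Hom_{\cO_X}(E_1,E_2)\to H^0(X,E_2(n_0))^{a_0}\to H^0(X,E_2(n_1))^{a_1}.$$
Tensoring with the flat $R$-algebra $K$ preserves exactness. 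The base-changed presentation $\cO_{X_K}(-n_1)^{a_1}\to \cO_{X_K}(-n_0)^{a_0}\to (E_1)_K\to 0$ is again right exact, so applying $\Hom_{\cO_{X_K}}(-,(E_2)_K)$ produces the analogous left-exact sequence computing $\Hom_{\cO_{X_K}}((E_1)_K,(E_2)_K)$. The two sequences are identified on their right-hand two terms by flat base change for cohomology, which applies since $X\to S$ is projective, each $E_2(n)$ is coherent and $R$-flat, and $\Spec K\to S$ is flat: $H^0(X,E_2(n))\otimes_R K\simeq H^0(X_K,(E_2)_K(n))$. A short diagram chase then produces the canonical isomorphism in (3). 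The only mildly delicate step is the correct invocation of flat base change; everything else is formal manipulation of exact sequences.
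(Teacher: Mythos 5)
Your proof is correct. Parts (1) and (2) are essentially the paper's argument: over the DVR $R$ flatness is $\pi$-torsion-freeness, and a homomorphism killed by $\pi$ lands in the $\pi$-torsion of $E_2$, which vanishes by flatness; the paper phrases this by factoring through $E_1/\pi E_1$ and using that $(E_2)_K$ is $R$-torsion free, but the content is identical (note the paper then says $\Hom_{\cO_X}(E_1,E_2)$ is \emph{free}, which uses finite generation over $R$ via properness; your direct conclusion ``torsion free $\Rightarrow$ flat'' sidesteps that and is cleaner). For part (3) you take a genuinely different route. The paper observes that $\Spec K\to\Spec R$ is an open immersion, so $X_K\subset X$ is open; it then applies flat base change to $H^0$ of the coherent sheaf $\cHom_{\cO_X}(E_1,E_2)$ and identifies $(\cHom_{\cO_X}(E_1,E_2))_K$ with $\cHom_{\cO_{X_K}}((E_1)_K,(E_2)_K)$ simply by restriction to that open subset --- two lines. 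You instead resolve $E_1$ by sums of twists $\cO_X(-n)$, reduce to flat base change for $H^0(X,E_2(n))$, and compare the two left-exact sequences; this is longer but more robust, since it never uses that $K$ is a localization of $R$ and would work for an arbitrary flat base change $R\to R'$ (where sheaf Hom need not commute with base change without such an argument). One small remark: flat base change for cohomology needs only flatness of $\Spec K\to\Spec R$, not $R$-flatness of $E_2(n)$, so that hypothesis in your step is harmless but unnecessary.
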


\begin{proof}
	Since $E_2$ is $R$-flat, the canonical map $E_2\to (E_2)_K$ is an inclusion. 
	If $\varphi: E_1\to E_2$ is a non-zero $\cO_X$-linear map and $\pi \varphi =0$ then $\varphi \pi =0$,
	so $\varphi$ factors through $E_1/\pi E_1\to E_2$. But $E_1/\pi E_1$ is a torsion $R$-module and
	$(E_2)_K$ is a torsion free $R$-module (since it is a $K$-vector space and $K$ is a torsion free $R$-module), so $E_1/\pi E_1\to E_2\subset (E_2)_K$ is the zero map and hence $\varphi =0$. It follows that $\Hom _{\cO_X} (E_1, E_2)$ is a torsion free $R$-module. So it is also a free $R$-module (and in particular $R$-flat). Since $E_1$ and $E_2$ are $\cO_X$-coherent, the canonical homomorphism
	$\cHom _{\cO_X} (E_1, E_2) _x\to \Hom _{\cO_{X,x}} ((E_1)_x, (E_2)_x)$ is an isomorphism for every point $x\in X$. Then a local version of the same argument as above shows that $\cHom _{\cO_X} (E_1, E_2)$ is flat over $S$. 
	Since cohomology commutes with  flat base change, we have an isomorphism
	$$\Hom _{\cO_X} (E_1, E_2)\otimes _RK\mathop{\longrightarrow}^{\simeq} H^0(X_K, (\cHom _{\cO_X} (E_1, E_2))_K).$$
	Since $X_K\subset X$ is open we have  $(\cHom _{\cO_X} (E_1, E_2))_K 
	\simeq \cHom _{\cO_{X_K}} ((E_1)_K, (E_2)_K)$ and hence
	$$\Hom _{\cO_X} (E_1, E_2)\otimes _RK\mathop{\longrightarrow}^{\simeq} \Hom _{\cO_{X_K}} ((E_1)_K, (E_2)_K) .$$
\end{proof}

\subsection{Langton type theorem for separatedness}

The following theorem is a far reaching generalization of  \cite[Theorem 5.2]{La} and \cite[Theorem 5.4]{La} (unfortunately, the proof of the first part of \cite[Theorem 5.2]{La} was omitted). The second part of the proof is similar to Gabber's proof of  \cite[Variant 2.5.2]{Ka} but we need to study semistability of various sheaves appearing in the proof.

\begin{Theorem}\label{slope-Langton2}
	Let $E_1$ and $E_2$ be $R$-flat $\cO_X$-coherent $L$-modules of relative dimension $d$.
	Assume that there exists an isomorphism $\varphi: (E_1)_K\to (E_2)_K$ of $L_K$-modules.
	Then we have the following implications:
	\begin{enumerate}
		\item If $(E_1)_k$ and $(E_2)_k$ are semistable in $\Coh ^L_{d,d'}(X_k)$ then they are strongly S-equivalent in $\Coh ^L_{d,d'}(X_k)$.
		\item  If $(E_1)_k$ and $(E_2)_k$ are polystable in $\Coh ^L_{d,d'}(X_k)$ then they are isomorphic in $\Coh ^L_{d,d'}(X_k)$.
		\item If $(E_1)_k$ is stable and $(E_2)_k$ is semistable in $\Coh ^L_{d,d'}(X_k)$ and $(E_1)_k$ is pure then $\pi^n\varphi$ extends to an isomorphism of $L$-modules $E_1\to E_2$  for some integer $n$. 
	\end{enumerate}
\end{Theorem}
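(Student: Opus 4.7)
The plan is to adapt Gabber's argument from \cite{Ka}, Variant 2.5.2, to the present setting of coherent $L$-modules in the quotient category $\Coh ^L_{d,d'}(X_k)$. The preliminary step is to promote $\varphi$ to an actual morphism defined over $R$. By Lemma \ref{hom-flatness}, $\Hom _L(E_1,E_2)$ is a torsion-free (hence free) $R$-module whose generic fiber contains $\varphi$, so there is a unique $N\in\ZZ$ such that $\psi:=\pi^{N}\varphi$ extends to an $L$-linear morphism $\psi\colon E_1\to E_2$ with non-zero reduction $\bar\psi:=\psi_k$ on the special fiber. I work with this $\psi$ throughout.

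For part (3), let $K:=\ker\bar\psi$ and $I:=\im\bar\psi$. Stability of $(E_1)_k$ in $\Coh ^L_{d,d'}(X_k)$ and semistability of $(E_2)_k$ force, via the standard additivity of normalized Hilbert polynomials in $0\to K\to (E_1)_k\to I\to 0$ modulo $\QQ[t]_{d'-1}$, the subobject $K$ to be zero in $\Coh ^L_{d,d'}(X_k)$, i.e., to have dimension $<d'$. Purity of $(E_1)_k$ then upgrades this to $K=0$ as an $\cO_{X_k}$-submodule, so $\bar\psi$ is injective. Since $(E_1)_k$ and $(E_2)_k$ have identical Hilbert polynomials (by $R$-flatness of $E_1,E_2$ and the generic isomorphism), the cokernel of the injection $\bar\psi$ has trivial Hilbert polynomial and hence vanishes, showing $\bar\psi$ is an isomorphism of $\cO_{X_k}$-modules. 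A routine argument using $R$-flatness of both $E_i$ together with vanishing of the kernel and cokernel of $\psi$ on the two fibers then upgrades this to $\psi$ being an isomorphism.

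For part (1), I proceed by elementary modifications. Put $Q:=\coker\bar\psi$ and consider the $R$-flat submodule $E_2':=\pi E_2+\psi(E_1)\subseteq E_2$. Applying the snake lemma to multiplication by $\pi$ on $0\to E_2'\to E_2\to Q\to 0$ yields the exact sequence
\[
0\to Q\to (E_2')_k\to I\to 0,
\]
which is the ``reversal'' of the canonical $0\to I\to (E_2)_k\to Q\to 0$. The numerical analysis from part (3) shows that $K,I,Q$ all have normalized Hilbert polynomial equal to $0$ or $p_{d,d'}((E_1)_k)$, so both filtrations are $d'$-semistable with quotients agreeing up to permutation. Lemma \ref{cor-Langton} then gives $(E_2)_k\simeq_{d'}(E_2')_k$, and the symmetric modification $\tilde E_1:=\psi^{-1}(\pi E_2)\subseteq E_1$ yields $(E_1)_k\simeq_{d'}(\tilde E_1)_k$. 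Iterating these modifications one arrives, after finitely many steps $M$, at a pair for which $\bar\psi$ is an isomorphism of $\cO_{X_k}$-modules, so $(E_1^{(M)})_k\cong (E_2^{(M)})_k$ as $L$-modules. Transitivity of strong S-equivalence (Corollary \ref{equivalence-relation}) concludes $(E_1)_k\simeq_{d'}(E_2)_k$. Part (2) is then an immediate consequence of (1), since strongly S-equivalent polystable objects share the same Jordan--H\"older factors and therefore decompose into isomorphic direct sums.

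The main obstacle is establishing termination of the iteration. The natural candidate invariant is $\mathrm{length}_R(E_2/\psi(E_1))$, which is finite because $\psi_K$ is an isomorphism; the modification $E_2\leadsto E_2'$ strictly decreases this length by the $k$-dimension of $Q$ whenever $Q\neq 0$, and the dual modification on the $E_1$ side handles a non-trivial $K$. However, one must carefully track how the optimal scaling $N$ of $\psi$ evolves under successive modifications, since replacing $E_1$ or $E_2$ by a modification forces a re-normalization; the content of Gabber's argument lies in choosing the modifications so that this bookkeeping closes, and the adaptation from the locally free case to general coherent $L$-modules requires extra care with torsion subsheaves and with the quotient category $\Coh ^L_{d,d'}(X_k)$ (in particular with verifying that semistability is preserved at each intermediate step, which is where Proposition \ref{equivalence-lemma} and Lemma \ref{S-equivalence-on-sequences} will intervene).
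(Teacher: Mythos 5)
Your overall strategy --- extend $\varphi$ to an integral $L$-linear map via Lemma \ref{hom-flatness}, settle (3) by a stability argument on the special fibre, and prove (1) by a chain of elementary modifications whose adjacent special fibres are strongly S-equivalent --- is the strategy of the paper's proof; both are adaptations of Gabber's argument. Parts (3) and (2) are essentially right, with one small omission: Lemma \ref{hom-flatness} concerns $\Hom_{\cO_X}$, so after scaling you only know that $\pi^N\varphi$ extends as an $\cO_X$-module map; the paper separately checks $L$-linearity by applying the same flatness lemma to $\Hom_{\cO_X}(E_1,E_2\otimes\Omega_L)$ and observing that the $\cO_X$-linear defect $(\varphi'\otimes\id)\circ\nabla_1-\nabla_2\circ\varphi'$ vanishes generically, hence vanishes.

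The genuine gap is the one you flag yourself in part (1): the termination and renormalization bookkeeping for the iteration $E_2\leadsto \pi E_2+\psi(E_1)$ is left open, and your candidate invariant $\mathrm{length}_R(E_2/\psi(E_1))$ is not literally finite ($E_2/\psi(E_1)$ is a coherent sheaf on $X$ supported over the closed point, not a finite $R$-module). The paper removes the issue entirely by writing the whole chain down at once: with $E_2'=\varphi^{-1}(E_2)\subset (E_1)_K$ and integers $n,m$ with $\pi^nE_1\subset E_2'$ and $\pi^mE_2'\subset E_1$, it sets $E(a,b):=\im\bigl(E_1\oplus E_2'\to (E_1)_K\bigr)$ via $\pi^a\oplus\pi^b$, so that $E_1=E(0,m)$, $E_2'=E(n,0)$, and the path of unit steps has length exactly $n+m$; no termination argument or re-normalization of $\psi$ is needed. (Equivalently: your iterates satisfy $E_2^{(j)}=\pi^jE_2+\psi(E_1)$, so the process stops at $E_2^{(e)}=\psi(E_1)\simeq E_1$ once $\pi^eE_2\subset\psi(E_1)$; observing this closed form is exactly what is missing.) Two further points must be made explicit to run your argument: (i) each intermediate special fibre must be shown semistable before the next step --- the paper does this directly from the surjection $(E_1)_k\oplus(E_2')_k\twoheadrightarrow E(a,b)_k$ onto a would-be destabilizing quotient; and (ii) for your two filtrations to be $d'$-semistable you need $\ker\bar\psi$, $\im\bar\psi$ and $\coker\bar\psi$ to be semistable objects of $\Coh^L_{d,d'}(X_k)$, not merely to have normalized Hilbert polynomial $0$ or $p_{d,d'}((E_1)_k)$; this follows from purity of the fibres and the averaging of reduced Hilbert polynomials, but it is an extra verification, and it is the analogue of the purity-plus-sandwiching step the paper carries out for $E(a+1,b)/\pi E(a,b)$ and $E(a,b)/E(a+1,b)$.
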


\begin{proof}
	By Lemma \ref{hom-flatness} $\Hom _{\cO_X} (E_1, E_2)$ is a free $R$-module and
$$\Hom _{\cO_X} (E_1, E_2)\otimes _RK\mathop{\longrightarrow}^{\simeq} \Hom _{\cO_{X_K}} ((E_1)_K, (E_2)_K) .$$
So if we treat $E_i$ as an $\cO_X$-submodule of $(E_i) _K$ then $\pi^n \varphi (E_1)\subset E_2$ for some integer $n$. Note that $\varphi'=\pi ^n\varphi : E_1\to E_2$ is a homomorphism of $L$-modules. More precisely, giving an $L$-module structure on $E_i$ is equivalent to an integrable $d_{\Omega _L}$-connection $\nabla_i: E_i\to E_i\otimes \Omega_L$. Under this identification $\varphi'$ is a homomorphism of $L$-modules
 if and only if $\alpha= (\varphi'\otimes \id)\circ \nabla _1-\nabla_2\circ \varphi '$ is the zero map.
 But this map is $\cO_X$-linear and $\alpha _K=0$, since $\varphi$ is a homomorphism of $L_K$-modules.
Since by Lemma \ref{hom-flatness} $\Hom _{\cO_X} (E_1, E_2\otimes \Omega _L)$ is a free $R$-module
and 
$$\Hom _{\cO_X} (E_1, E_2\otimes \Omega _L)\otimes _RK\mathop{\longrightarrow}^{\simeq} \Hom _{\cO_{X_K}} ((E_1)_K, (E_2)_K\otimes \Omega _{L_K}), $$
we have $\alpha =0 $ as required. 

Note that if we choose $n$ so that  $\pi^n \varphi (E_1)\subset E_2$ but $\pi^{n-1} \varphi (E_1)$ is not contained in $E_2$ then $\varphi'_k: (E_1)_k\to (E_2)_k$ is a non-zero map of sheaves with the same Hilbert polynomial. In particular, if $(E_1)_k$ is stable and $(E_2)_k$ is semistable in $\Coh ^L_{d,d'}(X_k)$ and $(E_1)_k$ is pure, then  $\varphi'_k$ is an isomorphism of $L_k$-modules, which implies that $\varphi '$ is an isomorphism of $L$-modules. This gives the last part of the theorem.
 	
Now let us set $\psi=\varphi^{-1}:(E_2)_K\to (E_1)_K$. Then as above for some integer $m$ we get a homomorphism of $L$-modules $\psi'=\pi ^m\psi : E_2\to E_1$. So setting $E_2'=\psi (E_2)$
we have inclusions of $L$-modules $\pi^m E_2'\subset E_1$ and $\pi ^nE_1\subset E_2'$.  
For every $(a,b)\in \ZZ^2$ let us set 
$$E(a,b):= \im (E_1\oplus E_2' \mathop{\longrightarrow}^{\pi^a\oplus \pi ^b} (E_1)_K ). $$
Each $E(a,b)$ is an $L$-submodule of $(E_1)_K$, which as an $\cO_X$-module is flat over $S$.
Moreover, we have $E(a,b)_K= (E_1)_K$.
We claim that $E(a, b)_k$ is semistable in $\Coh ^L_{d,d'}(X_k)$.
Assume it is not semistable and let $E(a, b)_k\twoheadrightarrow F$ be the minimal destabilizing quotient.
Note that $E(a, b)_k$, $(E_1)_k$ and $(E_2')_k$ are special fibers of $S$-flat families with the same general fiber $(E_1)_K$. Therefore their Hilbert polynomials, and hence also reduced Hilbert polynomials, coincide.
Since 
$$p((E_1)_k\oplus (E_2')_k)= p (E(a, b)_k)> p_{\min} (E(a,b)_k)=p (F) \mod \QQ [t]_{d'-1}$$
and $(E_1)_k\oplus (E_2')_k$ is semistable in $\Coh ^L_{d,d'}(X_k)$, every map of $L_k$-modules $(E_1)_k\oplus (E_2')_k\to F$ is zero in $\Coh ^L_{d,d'}(X_k)$. But we have a surjection $(E_1)_k\oplus (E_2')_k\to E(a, b)_k\to F$, a contradiction.

On $X_k$ we have  short exact sequences
 	$$0\to E(a+1,b)/\pi E(a,b) \to E(a,b)/\pi E(a,b) \to E(a,b)/ E(a+1,b)\to 0$$
and 
$$0\to  \pi E(a,b)/\pi E(a+1,b) \to E(a+1,b)/\pi E(a+1,b)\to E(a+1,b)/ \pi E(a,b)\to 0.$$	
Since $E(a,b)_k= E(a,b)/\pi E(a,b)$ and $E(a,b)_k$ is pure in  $\Coh ^L_{d,d'}(X_k)$, $E(a+1,b)/\pi E(a,b)$
is also pure in $\Coh ^L_{d,d'}(X_k)$. Similarly, the second sequence implies that $E(a,b)/E(a+1,b)\simeq \pi E(a,b)/\pi E(a+1,b)$ is pure in $\Coh ^L_{d,d'}(X_k)$.
If either $E(a+1,b)/\pi E(a,b)$ or $E(a,b)/E(a+1,b)$ has dimension $\le d'-1$, then the other sheaf is semistable in $\Coh ^L_{d,d'}(X_k)$ and it is clear that $E(a,b)_k$ and $E(a+1,b)_k$ are strongly S-equivalent in $\Coh ^L_{d,d'}(X_k)$.
So we can assume that both $E(a+1,b)/\pi E(a,b)$ and $E(a,b)/E(a+1,b)$ have dimension $d$.

Then the first short exact sequence shows that 
$$p_{\max} (E(a+1,b)/ \pi E(a,b))\le p (E(a, b)_k)\le p_{\min} (E(a,b)/ E(a+1,b)) \mod \QQ [t]_{d'-1}.$$
 Similarly, the second short exact sequence shows that 
$$ p_{\max} (E(a,b)/ E(a+1,b)) \le p (E(a+1,b)_k)\le p_{\min} (E(a+1,b)/ \pi E(a,b)) \mod \QQ [t]_{d'-1}.$$
 	
But $p (E(a, b)_k)=p (E(a+1,b)_k)$ so $E(a+1,b)/\pi E(a,b)$ and $ E(a,b)/ E(a+1,b)$ are semistable in $\Coh ^L_{d,d'}(X_k)$ with the same normalized Hilbert polynomial in $ \QQ [T]_{d, d'}$.
This shows that  $E(a,b)_k$ and  $E(a+1,b)_k$ are strongly S-equivalent in $\Coh ^L_{d,d'}(X_k)$.
Similarly,  $E(a,b)_k$ and  $E(a,b+1)_k$ are strongly S-equivalent in $\Coh ^L_{d,d'}(X_k)$. 
Since $E_1=E(0,m)$ and $E_2'= E(n,0)$, Corollary \ref{equivalence-relation} implies the first part of the theorem. The second part follows immediately from the first one. 
\end{proof}

\subsection{S-completness}

Let us recall some definitions from \cite[Section 3.5]{AHLH}.

If $R$ is a discrete valuation ring with a uniformizer $\pi$ then one can consider the following quotient stack
$$ \overline{\ST}_R: =[\Spec (R[s,t]/(st-\pi)) /\GG_m],$$
where $s$ and $t$ have $\GG_m$-weights $1$ and $-1$. 

\begin{Definition}
	We say that a morphism $f: \cX \to \cY$ of locally noetherian algebraic stacks is \emph{S-complete} if for any DVR $R$ and any commutative diagram 
	$$\xymatrix{
		\overline{\ST}_R \setminus 0\ar[d]\ar[r]&\cX \ar[d] \\
		\overline{\ST}_R \ar@{-->}[ru]\ar[r]&\cY.\\
	}$$
	of solid arrows, there exists a unique dashed arrow filling in the diagram.
\end{Definition}

\medskip

Let $Y$ be a projective  scheme  over an algebraically closed field $k$ and let $\cO_Y(1)$ be an ample line bundle
(one can also consider the general relative situation as in Section \ref{moduli-section} but we state the results in the simplest possible case to simplify notation). Let us also fix a smooth $k$-Lie algebroid on $Y$. 
Then we consider the moduli stack $\cM ^{L, \rm{ss}}_{d,d'}(X)$ of objects of  $\Coh ^L_{d}(Y)$ that are  semistable in $\Coh ^L_{d,d'}(Y)$.

\begin{Theorem}\label{S-completness}
The moduli stack $\cM^{L, \rm{ss}}_{d,d'}(Y)$ is S-complete over $\Spec k$.
\end{Theorem}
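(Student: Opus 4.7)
The plan is to verify the S-completeness criterion directly by reinterpreting the construction in the proof of Theorem~\ref{slope-Langton2} as a Rees-type extension across $\overline{\ST}_R$. By the open covering $\overline{\ST}_R \setminus 0 = [D(s)/\GG_m] \cup [D(t)/\GG_m]$, for any DVR $R$ over $k$ (which, after faithfully flat base change, we may take to have algebraically closed residue field $\kappa$) with uniformizer $\pi$, a morphism $\overline{\ST}_R \setminus 0 \to \cM^{L, \mathrm{ss}}_{d, d'}(Y)$ amounts to a pair of $R$-flat coherent $L_R$-modules $E_1, E_2$ on $Y_R$ with fibers semistable in $\Coh^L_{d, d'}$, together with an isomorphism $\varphi: (E_1)_K \simeq (E_2)_K$ of $L_K$-modules; a lift to $\overline{\ST}_R$ is then equivalent to a $\GG_m$-equivariant, $A$-flat coherent $L_A$-module $\tilde{E}$ on $Y_A$ (with $A = R[s, t]/(st - \pi)$) whose geometric fibers are semistable in $\Coh^L_{d, d'}(Y_\kappa)$ and whose restrictions to the two open $\Spec R$-strata recover $E_1$ and $E_2$ compatibly with $\varphi$.

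The required $\tilde{E}$ is assembled from the proof of Theorem~\ref{slope-Langton2}. Embedding $E_1$ and $E_2' := \varphi^{-1}(E_2)$ as $R$-flat $L_R$-submodules of $V := (E_1)_K$, the two-parameter family $E(a, b) := \pi^a E_1 + \pi^b E_2' \subset V$ (which satisfies $E(a+1, b+1) = \pi \cdot E(a, b)$) packages into a $\ZZ$-graded $A$-module $\tilde{M} = \bigoplus_{n \in \ZZ} \tilde{M}_n$ on which $s$ and $t$ act via the natural inclusions and $\pi$-multiplication maps between the $E(a, b)$, arranged so that $st = ts = \pi \cdot \id$ and so that the degree-zero colimits after inverting $s$ or $t$ recover $E_1$ or $E_2'$ respectively. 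Since each $E(a, b)$ is an $L_R$-submodule of $V$, the module $\tilde{M}$ inherits an $L_A$-module structure, which defines a coherent sheaf $\tilde{E}$ on $Y_A$. Its $A$-flatness follows from the $R$-flatness of the $E(a, b)$ combined with the injectivity of the relevant inclusion and multiplication-by-$\pi$ maps.

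The main obstacle is to show that the fiber of $\tilde{E}$ over the unique closed point at the origin $s = t = 0$ is semistable in $\Coh^L_{d, d'}(Y_\kappa)$; all other fibers are either $(E_1)_K$, $(E_1)_\kappa$, or $(E_2)_\kappa$, which are semistable by hypothesis. This closed fiber is a $\GG_m$-representation whose underlying $L_\kappa$-module is, via the Rees construction, isomorphic to the associated graded of a filtration of $(E_1)_\kappa$ built by interpolating between $E_1$ and $E_2'$ through the chains $E(0, 0) \supset E(1, 0) \supset \cdots$ and their analogues in the $b$-direction. The short exact sequences
\[ 0 \to E(a+1, b)/\pi E(a, b) \to E(a, b)_\kappa \to E(a, b)/E(a+1, b) \to 0 \]
and their counterparts in the second coordinate, analyzed in the proof of Theorem~\ref{slope-Langton2}, show that every successive quotient is semistable in $\Coh^L_{d, d'}(Y_\kappa)$ with the same normalized Hilbert polynomial as $(E_1)_\kappa$ modulo $\QQ[t]_{d'-1}$; hence their direct sum, which is the central fiber, is likewise semistable. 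Uniqueness of $\tilde{E}$ follows because any $\GG_m$-equivariant, $A$-flat extension with the prescribed restrictions must coincide with the Rees module determined by the relative position of $E_1$ and $E_2'$ inside $V$, pinning $\tilde{E}$ down up to canonical isomorphism.
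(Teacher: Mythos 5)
Your proof is correct and follows essentially the same route as the paper: it reduces the lifting problem to the data $(E_1,E_2,\varphi)$, builds the extension over $\overline{\ST}_R$ from the two-parameter family $E(a,b)$ constructed in the proof of Theorem \ref{slope-Langton2}, and verifies semistability of the central fibre via the same short exact sequences analysed there. The only difference is that you spell out the Rees-module construction and the uniqueness of the filling explicitly, whereas the paper delegates these points to \cite[Remark 3.36]{AHLH}.
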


\begin{proof}
Let us fix a commutative diagram 
	$$\xymatrix{
	\overline{\ST}_R \setminus 0\ar[d]\ar[r]& \cM^{L, \rm{ss}}_{d,d'}(Y)\ar[d] \\
	\overline{\ST}_R \ar[r]&\Spec k \\
}$$
and let us consider  $X=Y\times _k \Spec R \to S=\Spec R$. 
Giving a morphism $\overline{\ST}_R \setminus 0\to \cM^{L, \rm{ss}}_{d,d'}(Y)$ is equivalent to giving two $R$-flat $\cO_X$-coherent $L$-modules  $E_1$ and $E_2$ such that $(E_1)_k$ and $(E_2)_k$ are semistable in $\Coh ^L_{d,d'}(X_k)$ together with
an isomorphism $\varphi: (E_1)_K\to (E_2)_K$ of $L_K$-modules.

Let us use the notation from the proof of Theorem \ref{slope-Langton2} and let us set 
$$F_j=\left\{ \begin{array}{cl}
E(-j, 0) & \hbox{for }  j\le 0,\\
E(0, j) & \hbox{for } 1\le j.\\
\end{array} \right.$$
Let us now consider the diagram of maps
$$\xymatrix{
\cdots\ar@/^1pc/[r]^{1}& F_{-2}\ar@/^1pc/[r]^{1}\ar@/^1pc/[l]^{\pi} & F_{-1}\ar@/^1pc/[r]^{1}\ar@/^1pc/[l]^{\pi}&F_0\ar@/^1pc/[r]^{\pi}\ar@/^1pc/[l]^{\pi} &F_1\ar@/^1pc/[r]^{\pi}\ar@/^1pc/[l]^{1} &F_2\ar@/^1pc/[r]^{\pi }\ar@/^1pc/[l]^{1} &\cdots \ar@/^1pc/[l]^{1}\\
}$$
By assumption $\pi ^{n+m}E_1 \subset \pi^m E_2'\subset E_1$, so $n+m\ge 0$. Replacing $E_1$ with $E_2$ if necessary, we can therefore assume that $n\ge 0$. The proof of Theorem  \ref{slope-Langton2} shows that we have $F_j=E_2'$ for $j\le -n$  and $F_j=E_1$ for $j\ge n+m$. By \cite[Remark 3.36]{AHLH} and the proof of  Theorem  \ref{slope-Langton2},
this gives the required map $\overline{\ST}_R \to \cM^{L, \rm{ss}}_{d,d'}(Y)$.
\end{proof}

\section{Modules over Lie algebroids in positive characteristic}\label{modules-char-p}

Let $f: X\to S$ be a  morphism of noetherian schemes, where $S$ is a scheme of characteristic $p>0$.
Let $L$ be a smooth restricted $\cO_S$-Lie algebroid on $X$, i.e., a locally free $\cO_X$-module $L$ equipped with a restricted $\cO_S$-Lie algebra structure (i.e., an $\cO_S$-Lie algebra structure with the $p$-th power operation) and an anchor map $L\to T_{X/S}$ compatible with $p$-th power map (see \cite[Definitions 2.1 and 4.2]{La}). By $\Lambda _L$ we denote the universal enveloping algebra of the Lie algebroid $L$ (see \cite[p.~515]{La}).

\subsection{$p$-curvature}

Let $F_X:X\to X$ denote the absolute Frobenius morphism.
Let us recall that for an $L$-module $M=(E, \nabla: L\to \cEnd _{\cO_S} E)$ we can define its $p$-curvature
$\psi (\nabla): L\to \cEnd _{\cO_S} E$ by sending $x\in L$ to $(\nabla (x))^{p}-\nabla (x^{[p]})$. In fact, this gives rise 
to a map $\psi (\nabla)  : F_X^*L\to \cEnd _{\cO_X} E$, that we also call the $p$-curvature of $M$  (see \cite[4.4]{La}).
In this case, $(E, \psi(\nabla))$ defines an $F_X^*L$-module, where $F_X^*L$ has a trivial $\cO_S$-Lie algebroid structure
(equivalently, we get an $F_X^*L$-coHiggs sheaf  $(E, E\to E\otimes _{\cO_X} F_X^*\Omega_L)$). We will reinterpret this sheaf in terms of sheaves
on the total space  $\VV( F^*_XL)$ of $F_X^*L$ as follows.

Let us recall the following corollary of \cite[Lemma 4.5]{La}, generalizing an earlier known result for the ring of differential operators:

\begin{Lemma} \label{BMR-lemma}
	The map
	$\imath : F^*_XL\to \Lambda_L$ sending $x\otimes 1\in  F^*_XL= F^{-1}_XL\otimes_{F_X^{-1}\cO_X}\cO_X$
	for $x\in L$ to $\imath(x\otimes 1):=x^p-x^{[p]}\in \Lambda_L $ is $\cO_X$-linear and its
	image is contained in the centralizer $Z_{\Lambda_L}  (\cO_X)$ of $\cO_X$  in $\Lambda_L$. Moreover, $\imath$ extends to an inclusion of
	the symmetric algebra	$S^{\bullet}(F^*_XL)$ into $Z_{\Lambda_L}  (\cO_X)$.
\end{Lemma}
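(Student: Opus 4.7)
The plan is to verify the three assertions in turn: $\cO_X$-linearity of $\imath$, commutativity of its image with $\cO_X$ (in fact with all of $\Lambda_L$), and injectivity of the induced morphism from the symmetric algebra. The ingredients will be the restricted Lie algebroid axioms, Jacobson's and Hochschild's identities in characteristic $p$, and the PBW theorem for $\Lambda_L$.

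First I would show that the prescription $x \mapsto x^p - x^{[p]}$ is additive and $F_X$-semilinear as a map $L \to \Lambda_L$, which is essentially the content of \cite[Lemma 4.5]{La}. Additivity follows from Jacobson's formula: in the associative algebra $\Lambda_L$ one has $(x+y)^p = x^p + y^p + \sum_{i=1}^{p-1} s_i(x,y)$ for certain universal iterated Lie polynomials $s_i$, and by the very definition of a restricted Lie algebra exactly the same correction terms appear in the expansion of $(x+y)^{[p]}$; they therefore cancel in the difference. Semilinearity follows from Hochschild's identity $(fx)^p = f^p x^p + (fx)^{p-1}(f)\cdot x$ in $\Lambda_L$, combined with the parallel formula $(fx)^{[p]} = f^p x^{[p]} + (fx)^{p-1}(f)\cdot x$ in $L$ (which is part of the definition of a smooth restricted Lie algebroid), giving $\imath(fx) = f^p\, \imath(x)$. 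By the universal property of Frobenius pullback this $F_X$-semilinear additive map then descends to a well-defined $\cO_X$-linear morphism $F_X^*L \to \Lambda_L$.

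Next I would verify the commutativity statements. For $f \in \cO_X$ a short induction on $p$ yields $[x^p, f] = x^p(f)$, where $x$ acts on $\cO_X$ via the anchor and $x^p(f)$ denotes the $p$-fold iterate; similarly $[x^{[p]}, f] = x^{[p]}(f)$. The compatibility of the anchor with the $p$-th power operation forces these two elements to agree, so $\imath(x) \in Z_{\Lambda_L}(\cO_X)$. Moreover for $y \in L$ one has $[\imath(x), y] = (\ad x)^p(y) - \ad(x^{[p]})(y) = 0$ by the restricted Lie algebra axiom $(\ad x)^p = \ad(x^{[p]})$. Since $L$ together with $\cO_X$ generates $\Lambda_L$, the elements $\imath(x)$ are in fact central, so in particular $\imath(x)$ and $\imath(y)$ commute; the universal property of the symmetric algebra then extends $\imath$ to an $\cO_X$-algebra homomorphism $S^\bullet(F_X^*L) \to Z_{\Lambda_L}(\cO_X)$.

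The hard part will be injectivity. My plan is to argue locally: fix an $\cO_X$-basis $x_1, \dots, x_n$ of $L$, so that $x_1 \otimes 1, \dots, x_n \otimes 1$ is an $\cO_X$-basis of $F_X^*L$ and ordered monomials in these elements form an $\cO_X$-basis of $S^\bullet(F_X^*L)$. The PBW theorem for $\Lambda_L$ supplies a filtration with associated graded isomorphic to $S^\bullet_{\cO_X} L$, and in this filtration $\imath(x_i) = x_i^p - x_i^{[p]}$ has order exactly $p$ with symbol $x_i^p \in S^p L$. Consequently the monomial $\prod_i \imath(x_i)^{a_i}$ has symbol $\prod_i x_i^{p a_i}$, and distinct multi-indices $(a_i)$ produce $\cO_X$-linearly independent symbols in $S^\bullet_{\cO_X} L$. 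Hence any non-trivial $\cO_X$-linear combination of such monomials maps to a non-zero element of $\Lambda_L$, and injectivity follows. The main obstacle to be checked carefully is that the PBW theorem is available in the generality of Lie algebroids used in the paper (cf.\ \cite[Section 1]{La}) and that the identification of the associated graded with the symmetric algebra is compatible with the leading-term computation above.
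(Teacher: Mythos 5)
Your proof is correct. The paper itself gives no argument for this lemma --- it is quoted as a corollary of \cite[Lemma 4.5]{La} --- and what you have written is precisely the standard proof underlying that citation: Jacobson's formula for additivity, Hochschild's identity (built into the definition of a restricted Lie algebroid) for $p$-semilinearity, the identities $\ad(x^p)=(\ad x)^p$ and $(\ad x)^p=\ad(x^{[p]})$ together with compatibility of the anchor with $p$-th powers for centrality, and the order filtration with $\gr\Lambda_L\simeq S^{\bullet}_{\cO_X}L$ for injectivity via leading symbols. The one point you rightly flag --- availability of the PBW isomorphism $\gr\Lambda_L\simeq S^{\bullet}L$ --- is unproblematic here, since $L$ is locally free and Rinehart's theorem applies to the universal enveloping algebra of a smooth Lie algebroid.
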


The above lemma shows that there exists a sheaf of (usually non-commutative) rings ${\ti\Lambda}_{L}$
with an injective homomorphism of sheaves of rings  $\cO_{\VV ( F^*_XL)}\to {\ti\Lambda}_{L}$  such that
$$\pi _* {\ti\Lambda}_{L}={\Lambda}_L ,$$
where $\pi : \VV(F_X^*L)\to X$ denotes the canonical projection. By \cite[Lemma 4.6]{La} 
${\ti \Lambda}_L$ is  locally free of finite rank both as a left and a right $\cO_{\VV (F_X^*L)}$-module.

In the following ${\mathrm \QCoh \, } (Y, \cA)$ denotes the category of (left) $\cA$-modules, which are quasicoherent as $\cO_Y$-modules.

\begin{Lemma}\label{equivalence}
	We have an equivalence of categories 
	$${\mathrm \QCoh \, } (X, L) \simeq {\mathrm \QCoh \, } (\VV (F_X^*L), {\ti\Lambda}_{L})$$
	such that if  $M$ is an $L$-module and $\ti M$ is the corresponding $\ti \Lambda _{L}$-module then 
	$$\pi _* \ti M=M.$$
	Moreover, $M$ is coherent as an $\cO_X$-module if and only if $\ti M$ is coherent as an $\cO_{\VV (F_X^*L)}$-module
	and its support is proper over $X$.
\end{Lemma}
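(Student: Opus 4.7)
The plan is to construct the equivalence formally from the affineness of $\pi$ and then handle the coherence refinement via a Cayley--Hamilton type argument for the $p$-curvature action of the polynomial algebra $S^\bullet(F_X^*L)$.

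First, I would use that $\pi\colon \VV(F_X^*L)\to X$ is affine. The standard Grothendieck equivalence identifies $\QCoh(\VV(F_X^*L))$ with the category of quasi-coherent $\pi_*\cO_{\VV(F_X^*L)}=S^\bullet(F_X^*L)$-modules on $X$; the same affine argument, applied to modules over a sheaf of $\cO_{\VV(F_X^*L)}$-algebras $\cA$, yields the refinement
$$\QCoh(\VV(F_X^*L),\cA)\simeq \QCoh(X,\pi_*\cA).$$
Taking $\cA=\ti\Lambda_L$ and using the identification $\pi_*\ti\Lambda_L=\Lambda_L$ recalled immediately before the lemma, one obtains
$$\QCoh(\VV(F_X^*L),\ti\Lambda_L)\simeq \QCoh(X,\Lambda_L)=\QCoh(X,L),$$
the last equality being the universal property of the enveloping algebra of $L$. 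The relation $\pi_*\ti M=M$ is then built into the construction.

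For the coherence statement I would treat the two directions separately. The easy direction: if $\ti M$ is $\cO_{\VV(F_X^*L)}$-coherent with scheme-theoretic support $Z\subset \VV(F_X^*L)$ proper over $X$, then $\pi|_Z$ is simultaneously affine and proper, hence finite, so $M=\pi_*\ti M$ is $\cO_X$-coherent. For the converse, assume $M$ is $\cO_X$-coherent. By Lemma \ref{BMR-lemma} the action of $S^\bullet(F_X^*L)\subset \Lambda_L$ on $M$ is $\cO_X$-linear, so yields an $\cO_X$-algebra homomorphism
$$\rho\colon S^\bullet(F_X^*L)\to \cEnd_{\cO_X}(M).$$
Since $X$ is noetherian and $M$ is coherent, $\cEnd_{\cO_X}(M)$ is $\cO_X$-coherent, so the image $\cA$ of $\rho$ is a quasi-coherent $\cO_X$-subalgebra of a coherent sheaf, hence coherent, hence a finite $\cO_X$-algebra. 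The induced surjection $S^\bullet(F_X^*L)\twoheadrightarrow \cA$ corresponds to a closed immersion $i\colon Z=\Spec\cA\hookrightarrow \VV(F_X^*L)$ finite over $X$, and the factorisation of $\rho$ through $\cA$ translates, via the equivalence of the first paragraph, into the statement $\ti M=i_*\ti N$ for the coherent $\cO_Z$-module $\ti N$ obtained by regarding $M$ as a finitely generated $\cA$-module. In particular $\ti M$ is $\cO_{\VV(F_X^*L)}$-coherent with support contained in $Z$, which is finite---in particular proper---over $X$.

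I expect the first paragraph to be entirely formal once the identification $\pi_*\ti\Lambda_L=\Lambda_L$ from the discussion preceding the lemma is granted. The substantive content is the finiteness step in the converse direction: the action of the polynomial algebra $S^\bullet(F_X^*L)$ on an $\cO_X$-coherent module cannot use all of $S^\bullet(F_X^*L)$ but must factor through a finite $\cO_X$-subalgebra of $\cEnd_{\cO_X}(M)$, and this is precisely what confines the support of $\ti M$ to a closed subscheme of $\VV(F_X^*L)$ finite over $X$.
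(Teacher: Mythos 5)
Your proof is correct, and the first paragraph (the equivalence via affineness of $\pi$ together with $\pi_*{\ti\Lambda}_L=\Lambda_L$) as well as the ``easy'' direction of the coherence claim coincide with the paper's argument. You diverge in the converse direction. The paper notes that the support of $\ti M$ is quasi-finite over $X$, and then argues that this support, being closed in $\VV(F_X^*L)$, stays closed in the compactification $\PP(F_X^*L\oplus\cO_X)$ and is therefore proper over $X$. You instead form the image $\cA=\im\left(S^{\bullet}(F_X^*L)\to \cEnd_{\cO_X}(M)\right)$, observe that it is a quasi-coherent subsheaf of a coherent sheaf on a noetherian scheme and hence a finite $\cO_X$-algebra, and locate the support of $\ti M$ inside the closed subscheme $\Spec\cA\hookrightarrow \VV(F_X^*L)$, which is finite over $X$. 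This is a genuinely different mechanism, and it buys you something: it yields finiteness (not merely properness) of the support over $X$ and supplies explicitly the point the paper leaves to the reader, namely why a subset closed in the open locus $\VV(F_X^*L)$ remains closed in the projective compactification (in the paper's version one must still combine quasi-finiteness with closedness to see that the closure cannot meet the divisor at infinity). The paper's route is shorter; yours is more self-contained and makes the Cayley--Hamilton-type finiteness of the $p$-curvature action the visible engine of the argument.
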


\begin{proof}
Since $\pi $ is affine, we have the following equivalences of categories:
$${\mathrm \QCoh \, } (X, L) \simeq {\mathrm \QCoh \, } (X, \Lambda_ L)
\simeq {\mathrm \QCoh \, } (\VV ( F^*_XL), {\ti\Lambda}_{L}).$$
If $M$ is coherent then $\ti M$ is coherent.  Let us fix a relative compactification $Y$ of $\VV(F_X^*L)$, e.g., 
$Y=\PP (F_X^*L\oplus \cO_X)\to X$.  The support of $\ti M$ is quasi-finite over $X$, so it does not intersect
the divisor at infinity $D=Y- \VV(F_X^*L)$. Since ${\mathrm {Supp}}\,  \ti M$ is closed in $\VV(F_X^*L)$, it is also closed in $Y$ and hence it is proper over $X$. On the other hand, if  $\ti M$ is coherent and the support of $\ti M$ is proper over $X$ then $M=\pi _* \ti M$ is coherent.
\end{proof}

Let $M=(E, \nabla: L\to \cEnd _{\cO_S} E)$ be an  $L$-module, quasi-coherent as an $\cO_{X}$-module and let 
$\ti M$ be a  $\ti \Lambda _{L}$-module corresponding to $M$. Let $\ti N$ denote $\ti M$ considered as 
an $\cO_{\VV (F_X^*L)}$-module. Using the standard equivalence 
$${\mathrm \QCoh \, } (X, F_X^*L) \simeq {\mathrm \QCoh \, } (\VV ( F^*_XL), \cO_{\VV( F^*_XL)}),$$
we get an $F_X^*L$-module $N=\pi _*\ti N$. Lemma \ref{BMR-lemma} shows that this module is equal to $(E, \psi (\nabla))$, which gives another interpretation of the $p$-curvature.

\subsection{Modules on the Frobenius twist} \label{F-twist}

Now let us assume that $X/S$ is smooth of relative dimension $d$. Let $F_{X/S}: X\to X'$ denote the relative Frobenius morphism over $S$ and let
$L'$ be the pull back of  $L$ via $X'\to X$.  \cite[Lemma 4.5]{La} shows that $\imath :
S^{\bullet}(F^*_XL)=F^*_{X/S}S^{\bullet}(L')\to \Lambda_L$ induces a homomorphism of sheaves of
$\cO_{X'}$-algebras
$$S^{\bullet}(L')\to F_{X/S, *}(Z(\Lambda_L))\subset \Lambda_L':=F_{X/S, *}\Lambda_L .$$
In particular, it makes $\Lambda'_L$ into a quasi-coherent sheaf
of $S^{\bullet}(L')$-modules. This defines  a
quasi-coherent sheaf of $\cO_{\VV (L')}$-algebras
${\ti\Lambda}_{L}'$ on the total space $\VV(L')$ of $L'$. By construction
$$\pi '_* {\ti\Lambda}_{L}'={ F}_{X/S, *}{\Lambda}_L ,$$
where $\pi' : \VV(L')\to X'$ denotes the canonical projection.
Let us recall that by \cite[Theorem 4.7]{La} 
${\ti \Lambda}_L'$ is a locally free $\cO_{\VV (L')}$-module of rank $p^{m+d}$,  where $m$ is the rank of $L$.

\medskip 

The first part of the following lemma generalizes  \cite[Lemma 2.8]{Gr}.
The second part is a generalization of \cite[Lemma 6.8]{Si2}.

\begin{Lemma}\label{equivalence2}
	We have an equivalence of categories 
	$${\QCoh \, } (X, L) \simeq {\mathrm \QCoh \, } (\VV (L'), {\ti\Lambda}_{L}')$$
	such that if  $M$ is an $L$-module and $M'$ is the corresponding $\ti \Lambda _{L}'$-module then 
	$$\pi '_* M'={ F}_{X/S, *}M.$$
	Moreover, $M$ is coherent as an $\cO_X$-module if and only if $M'$ is coherent as an $\cO_{\VV (L')}$-module
	and its support is proper over $X'$ (or equivalently the closure of the support of $M'$ does not intersect the divisor at infinity). 
\end{Lemma}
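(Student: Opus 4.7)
The structure of the proof should closely mirror that of Lemma \ref{equivalence}, now using the relative Frobenius $F_{X/S}$ in place of the absolute Frobenius. The first step is to recast everything as modules over a sheaf of algebras pushed down to $X'$. Since $X/S$ is smooth of relative dimension $d$, the relative Frobenius $F_{X/S}:X\to X'$ is a finite flat morphism (locally free of rank $p^d$), hence affine, so pushforward gives an equivalence of categories
$$\QCoh(X, \Lambda_L)\;\simeq\;\QCoh(X', F_{X/S,*}\Lambda_L)\;=\;\QCoh(X', \Lambda_L').$$
On the other hand, the projection $\pi':\VV(L')\to X'$ is affine, and by construction $\pi'_*\tilde\Lambda_L'=\Lambda_L'$, so another application of pushforward along an affine morphism yields
$$\QCoh(\VV(L'), \tilde\Lambda_L')\;\simeq\;\QCoh(X', \pi'_*\tilde\Lambda_L')\;=\;\QCoh(X', \Lambda_L').$$
Combining these with the tautological identification $\QCoh(X, L)=\QCoh(X, \Lambda_L)$, I obtain the required equivalence. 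If $M\leftrightarrow M'$ under this chain, then chasing the identifications one sees that $\pi'_*M'=F_{X/S,*}M$, since both are the underlying $\Lambda_L'$-module on $X'$.

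For the coherence statement, the key fact is again that $F_{X/S}$ is finite. Therefore $M$ is $\cO_X$-coherent if and only if $F_{X/S,*}M=\pi'_*M'$ is $\cO_{X'}$-coherent. It remains to translate the latter into a condition on $M'$ as a sheaf on $\VV(L')$. For this I would follow the compactification trick used in the proof of Lemma \ref{equivalence}: take the relative compactification $Y=\PP(L'\oplus\cO_{X'})\to X'$ of $\VV(L')$, with divisor at infinity $D=Y\setminus\VV(L')$. If $M'$ is $\cO_{\VV(L')}$-coherent with support proper over $X'$, then that support is quasi-finite over $X'$, is disjoint from $D$, and is closed in $Y$, so $M'$ extends by zero to a coherent sheaf on $Y$ with proper support over $X'$; its pushforward to $X'$ is therefore coherent. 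Conversely, if $\pi'_*M'$ is $\cO_{X'}$-coherent, then since $\tilde\Lambda_L'$ is a locally free $\cO_{\VV(L')}$-module of finite rank $p^{m+d}$, the $\tilde\Lambda_L'$-module $M'$ is locally finitely generated, hence $\cO_{\VV(L')}$-coherent, and the fiberwise finite generation of $\pi'_*M'$ over $\cO_{X'}$ forces the support of $M'$ to be quasi-finite (and therefore proper, by the same closure-in-$Y$ argument) over $X'$. The equivalence of properness of the support with the property that the closure in $Y$ avoids $D$ is elementary.

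The step I expect to be the only mildly delicate one is the backward direction of the coherence claim, namely showing that coherence of $\pi'_*M'$ as an $\cO_{X'}$-module already forces $M'$ itself to be $\cO_{\VV(L')}$-coherent with proper support. For that one really uses that the sheaf of algebras $\tilde\Lambda_L'$ is a \emph{finite} $\cO_{\VV(L')}$-module (of rank $p^{m+d}$, by \cite[Theorem 4.7]{La}), together with the compactification argument above; without this finiteness one would only get $M'$ coherent as a $\tilde\Lambda_L'$-module. Everything else is formal manipulation of affine pushforwards.
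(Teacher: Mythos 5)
Your proposal is correct and follows essentially the same route as the paper: the equivalence is obtained by composing the affine pushforwards along $F_{X/S}$ and $\pi'$ (using $\pi'_*\ti\Lambda_L'=F_{X/S,*}\Lambda_L$), and the coherence criterion is reduced, exactly as in the paper, to the compactification argument of Lemma \ref{equivalence} together with the finiteness of $F_{X/S}$. The only difference is that you spell out the details that the paper dispatches with ``can be proven in the same way as Lemma \ref{equivalence}.''
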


\begin{proof}
	Since both $\pi '$ and $F_{X/S}$ are affine, we have the following equivalences of categories:
	$${\mathrm \QCoh \, } (X, L) \simeq {\mathrm \QCoh \, } (X, \Lambda_ L)\simeq
	{\mathrm \QCoh \, } (X', { F}_{X/S, *}{\Lambda}_L )={\mathrm \QCoh \, } (X',\pi '_* {\ti\Lambda}_{L}')
	\simeq {\mathrm \QCoh \, } (\VV (L'), {\ti\Lambda}_{L}').$$
	The second part can be proven in the same way as Lemma \ref{equivalence}.
	Namely, $M$ is coherent if and only if ${ F}_{X/S, *}M=\pi'_*M'$ is coherent, which is equivalent to the fact that $M'$ is coherent and its support is proper over $X'$. 
\end{proof}

\medskip
We have a cartesian diagram
$$\xymatrix{
	\VV (F_X^*L) \ar[d]^{\pi}\ar[r]^{\ti F_{X/S}} &\VV (L')\ar[d]^{\pi'}\\
	X\ar[r]^{F_{X/S}}&X'\\
}$$
coming from the equality $F_{X/S}^*L'=F_X^*L$. By definition we have $\ti F_{X/S, *} {\ti\Lambda}_{L}={\ti\Lambda}_{L}'$ and equivalences of Lemmas \ref{equivalence} and \ref{equivalence2}
are compatible with each other. More precisely, by the flat base change
if $M=(E, \nabla: L\to \cEnd _{\cO_S} E)$ is an $L$-module, $\ti M$ is the corresponding $\ti \Lambda _{L}$-module and $M'$ is the corresponding $\ti \Lambda _{L}'$-module then 
$$\ti F_{X/S, *}\ti M=M'.$$
The  $\cO_{\VV (L')}$-module structure on $M'$ corresponds to the $S^{\bullet} (L')$-module structure on $E'=F_{X/S,*}E$. The corresponding map $L'\to \cEnd _{\cO_{X'}} E'$ is denoted by $\psi'(\nabla)$. If we interepret the $p$-curvature of $M$ as an $\cO_X$-linear map $F^*_{X/S}L'\otimes E\to E$, take the push-forward  by $F_{X/S}$ and use the projection formula, we get
 an $\cO_{X'}$-linear map $L'\otimes E'\to E'$ corresponding to  $\psi '(\nabla)$.
Note that if $E$ has rank $r$ then $E'$ has rank $p^dr$ as an $\cO_{X'}$-module and 
$(E', \psi'(\nabla))$ is an $L'$-module, where $L'$ is considered with trivial Lie algebroid  structure.

\section{Moduli stacks in positive characteristic}\label{moduli-section}

In this section we fix the following notation.
Let $f: X\to S$  be a flat projective morphism of noetherian schemes and let 
$\cO_X (1)$ be a relatively ample line bundle.  In the following 
stability of sheaves on the fibers of $X\to S$ is considered with respect to 
this fixed polarization. Assume also that $X/S$ is 
a family of $d$-dimensional varieties satisfying Serre's condition $(S_2)$.
Let $L$ be a smooth $\cO_S$-Lie algebroid on $X$ and let us set $\Omega_L=L^*$.

\subsection{Moduli stack of $L$-modules}

Let us fix a polynomial $P$. The moduli stack of $L$-modules $\cM^{L} (X/S, P)$ is 
defined as a lax functor from $(\Sch /S) $  to the $2$-category of groupoids, where $\cM(T)$ is the category whose objects are $T$-flat families
of $L$-modules with Hilbert polynomial $P$ on the fibres of $X_T\to T$, 
and whose morphisms are isomorphisms of quasi-coherent sheaves. $\cM^{L} (X/S,
P)$ is an Artin algebraic stack for the fppf topology on $(\Sch /S)$, which is 
locally of finite type. It contains open substacks 
$\cM^{L, \rm{tf}} (X/S, P)$ and  $\cM^{L, \rm{ss}} (X/S, P)$, 
which corresponds to families of $\cO_X$-torsion free and Gieseker semistable $L$-modules, respectively.

\subsection{Hitchin's morphism for $L$-coHiggs
	sheaves}

Let us fix some positive integer $r$ and consider the functor which to an $S$-scheme $h:
T\to S$ associates
$$\bigoplus _{i=1}^rH^0(X_T/T, {{S} }^i\Omega_{L,T}).$$
By \cite[Lemma 3.6]{La} this functor is representable by an
$S$-scheme $\VV ^L  (X/S, r)$.

Let us fix a polynomial $P$ of degree $d=\dim (X/S)$, corresponding to rank $r$ sheaves.
If we consider $L$ with a trivial $\cO_S$-Lie algebroid structure (we will denote it by $L_{\rm triv}$), then the corresponding moduli stacks are denoted by $\cM^L_{\rm Dol}(X/S, P)$, $\cM^{L, \rm{tf}}_{\rm Dol} (X/S, P)$ etc. (the Dolbeaut moduli stacks of $\Omega_L$-Higgs sheaves).
One can define Hitchin's morphism
$$H_L: \cM^{L, \rm{tf}}_{\rm Dol}(X/S, P) \to \VV ^L(X/S, r)$$
by evaluating elementary symmetric polynomials $\sigma _i$ on $E\to E\otimes \Omega_{L_T}$
corresponding to an $L_{\rm triv}$-module structure on a locally free part of $E$  (see \cite[3.5]{La}).
Alternatively, we can describe it as follows. Assume $E$ is a locally free $\cO_{X_T}$-module. 
An $L_{\rm triv}$-module structure on $E$ can be interpreted  as
a section $s: \cO_T\to \cEnd E\otimes \Omega_{L_T}$. Locally this gives a matrix with values in $\Omega _{L_T}$
and we can consider the characteristic polynomial 
$$\det (t\cdot I-s)=t^r+\sigma_1 (s) t^{r-1}+...+\sigma _r (s),$$
where $t$ is a formal variable.
To see that this makes sense one needs to use the integrability condition $s\wedge s=0$ (which is obtained from
the $L_{\rm triv}$-module structure).
These local sections glue to  $$H_L((E,s))= (\sigma_1 (s), ..., \sigma _r (s))\in \VV ^L(X/S, r) (T). $$
In general, one uses this construction on a big open subset on which $E$ is locally free and uniquely extends the sections using  Serre's condition $(S_2)$.

\subsection{Langton type properness theorem}

Let $R$ be a discrete valuation ring with maximal ideal $m$ and the quotient field $K$. Let us assume that  the residue field $k=R/m$ is  algebraically closed.

\medskip

The following  Langton's type theorem for modules  over Lie algebroids is a special case of
\cite[Theorem 5.3]{La}. 

\begin{Theorem}\label{slope-Langton}
	Let $S=\Spec R $ and let $F$ be an $R$-flat $\cO_X$-coherent $L$-module of relative
	pure dimension $n$ such that the $L_K$-module $F_K=F\otimes _RK$
	is Gieseker semistable. Then there exists an  $R$-flat $L$-submodule $E\subset
	F$ such that $E_K=F_K$ and $E_k$ is a Gieseker semistable
	$L_k$-module on $X_k$.
\end{Theorem}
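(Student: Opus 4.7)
The plan is to construct $E$ by iterated elementary modifications along the maximal destabilizing $L_k$-subsheaf of the special fibre, following Langton's original strategy as adapted to the Lie algebroid setting.

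I would begin with the inductive step. Given any $R$-flat $\cO_X$-coherent $L$-module $\tilde E$ of relative pure dimension $n$ with $\tilde E_K = F_K$, if $\tilde E_k$ is not Gieseker semistable as an $L_k$-module, then it admits a Harder--Narasimhan filtration in the category of $L_k$-modules; canonicity of this filtration ensures that its maximal destabilizing term $B \subset \tilde E_k$ is automatically $L_k$-invariant. Define
\[
\tilde E' := \ker\bigl( \tilde E \twoheadrightarrow \tilde E_k \twoheadrightarrow \tilde E_k/B \bigr).
\]
Then $\tilde E'$ is an $L$-submodule of $\tilde E$ (since $B$ is $L_k$-stable), it is $R$-flat, and $\tilde E'_K = \tilde E_K = F_K$. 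On the special fibre one has the short exact sequence of $L_k$-modules
\[
0 \to \tilde E_k/B \to \tilde E'_k \to B \to 0,
\]
and purity of relative dimension $n$ is preserved because $\tilde E'$ is a subsheaf of the $R$-flat sheaf $\tilde E$.

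I would then iterate, producing a descending chain $F = E^{(0)} \supset E^{(1)} \supset \cdots$ of $L$-submodules, and argue by contradiction that it must terminate. Writing $B^{(n)} \subset E^{(n)}_k$ for the maximal destabilizing $L_k$-subsheaf and $G^{(n)} := E^{(n)}_k/B^{(n)}$, the composition $B^{(n+1)} \hookrightarrow E^{(n+1)}_k \twoheadrightarrow B^{(n)}$ is the key comparison tool: if it vanishes, then $B^{(n+1)}$ embeds in $G^{(n)}$, so $p(B^{(n+1)}) \le p_{\max}(G^{(n)}) < p(B^{(n)})$; if it is nonzero, the Gieseker semistability of $B^{(n)}$ as an $L_k$-module yields $p(B^{(n+1)}) \le p(B^{(n)})$, and the strict inequality $p_{\max}(G^{(n)}) < p(B^{(n)})$ controls the kernel. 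Thus $p(B^{(n)})$ is non-increasing. Boundedness of the family of Gieseker semistable $L_k$-modules with reduced Hilbert polynomial bounded above by $p(F_K)$ reduces, since every $L_k$-submodule is a fortiori an $\cO_{X_k}$-submodule, to the classical boundedness of semistable coherent sheaves proved in \cite{La0}, and this forces the sequence $p(B^{(n)})$ to stabilize.

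Once $p(B^{(n)})$ has stabilized, the $B^{(n)}$ viewed as $L_k$-subsheaves of the common generic fibre $F_K$ (via spreading out the sequence) form an infinite decreasing chain of $L$-submodules inside a bounded family, and the usual Langton argument produces a proper $L_K$-submodule of $F_K$ whose reduced Hilbert polynomial exceeds $p(F_K)$, contradicting the Gieseker semistability of $F_K$. Hence the procedure terminates, and the resulting $E^{(n)}$ is the required $E$. The main obstacle is exactly this termination step; every other ingredient is essentially formal once one notices that the Harder--Narasimhan filtration of an $L_k$-module is canonical and hence $L_k$-stable, so that all elementary modifications, kernels, and chain limits used in Langton's argument stay inside the category of $L$-modules without modification.
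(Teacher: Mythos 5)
The paper does not actually prove this statement: Theorem \ref{slope-Langton} is quoted verbatim as a special case of \cite[Theorem 5.3]{La}, where it is proved by exactly the adaptation of Langton's elementary-modification argument that you describe. So your overall strategy --- modify along the maximal destabilizing $L_k$-submodule of the special fibre, check that the modification stays $R$-flat with the same generic fibre and controlled special fibre, and show termination by monotonicity of $p(B^{(n)})$ plus a contradiction with semistability of $F_K$ --- is the intended one, and the exact sequence $0\to \tilde E_k/B\to \tilde E'_k\to B\to 0$ and the two-case comparison of $B^{(n+1)}$ with $B^{(n)}$ are correct.

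Two points need repair. First, the justification ``canonicity of the Harder--Narasimhan filtration ensures that $B$ is $L_k$-invariant'' is the wrong mechanism and, read literally, false: the $L_k$-action is by first-order differential operators, not $\cO_{X_k}$-linear automorphisms, so canonicity of the sheaf-theoretic HN filtration gives no invariance (if it did, $L$-semistability would coincide with sheaf semistability, which fails already for Higgs bundles). What you need, and do mention in the same sentence, is the HN filtration taken \emph{inside the abelian category of coherent $L_k$-modules}, whose terms are $L_k$-submodules by construction; its existence follows from Grothendieck's boundedness lemma applied to the underlying sheaves. Second, the endgame is only gestured at and is misstated: the $B^{(n)}$ are submodules of the special fibres $E^{(n)}_k$, not of $F_K$, so they do not form a chain inside $F_K$. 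The actual contradiction (as in Langton and \cite[Theorem 2.B.1]{HL}) requires, after $P(B^{(n)})$ stabilizes, showing that $B^{(n+1)}\to B^{(n)}$ is an isomorphism (its kernel lies in $G^{(n)}$, forcing $p<p(B^{(n+1)})$, which is incompatible with $p(\im)=p(B^{(n+1)})$), and then passing to the completion $\hat R$ and taking $\bigcap_n E^{(n)}\otimes\hat R$ to produce an $\hat R$-flat $L$-submodule whose generic fibre destabilizes $F_{\hat K}$; one must also note that semistability of $F_K$ persists over $\hat K$. All of these constructions are $\cO$-linear or are kernels/intersections of $L$-module maps, so they do stay in the category of $L$-modules, but as written this step is a genuine gap rather than a routine omission.
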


\subsection{Properness of the $p$-Hitchin morphism}

Assume that $S$ has characteristic $p>0$ and $L$ is a restricted $\cO_S$-Lie algebroid.
Then the $p$-curvature gives rise to a morphism of moduli stacks
$$\begin{array}{cccc}
	\Psi_L: & \cM^L(X/S, P)&\to&  \cM^{F_X^*L}_{\rm Dol}(X/S, P)\\
	&(E, \nabla)&\to&(E,\psi (\nabla)).
\end{array}
$$
If $(E, \psi(\nabla))$ is Gieseker semistable then $(E,\nabla)$ is Gieseker  semistable. However, the opposite implication fails. For example, one can consider any semistable vector bundle $G$ on a smooth projective curve $X$ defined over a field of characteristic $p$ such that $F_X^*G$ is not semistable. Then $E=F_X^*G$ has a canonical connection $\nabla$ 
with vanishing $p$-curvature. In this case $(E, \nabla)$ is semistable but  $(E, \psi(\nabla))$ is not semistable.
This shows that the morphism $\Psi_L$ does not restrict to a morphism of moduli stacks of semistable objects. 
But since Gieseker semistable $L$-modules are torsion free, we can still consider $\Psi _L: \cM^{L, ss} (X/S, P)\to \cM^{F_X^*L, \rm{tf}}_{\rm Dol}(X/S, P)$. The composition of this morphism with Hitchin's morphism $H_{F_X^*L}: \cM^{F_X^*L, \rm{tf}}_{\rm Dol}(X/S, P) \to  \VV^{ F_X^*L}(X/S,r)$ will be called a \emph{$p$-Hitchin morphism}.

\begin{Theorem}\label{p-Hitchin-properness}
Let us fix a polynomial $P$ of degree $d=\dim (X/S)$, corresponding to rank $r$ sheaves.
The $p$-Hitchin morphism $H_{L,p}: \cM^{L, ss} (X/S, P)\to  \VV^{ F_X^*L}(X/S,r)$ is universally closed.
\end{Theorem}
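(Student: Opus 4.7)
The plan is to verify the valuative criterion for universally closed morphisms. Let $R$ be a discrete valuation ring with fraction field $K$ and (after a finite extension) algebraically closed residue field $k$. A commutative diagram to be lifted consists of a Gieseker semistable $L_K$-module $(E_K,\nabla_K)$ on $X_K$ with Hilbert polynomial $P$ together with an extension $(\sigma_i)_R \in \bigoplus_{i=1}^r H^0(X_R, S^i F_X^*\Omega_{L,R})$ of the $p$-curvature characteristic polynomial of $E_K$; we must produce an $R$-flat Gieseker semistable $L$-module on $X_R$ whose $p$-Hitchin image is $(\sigma_i)_R$.

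The key geometric input is the \emph{spectral cover} $Y_R \subset \VV(F_X^*L)_R$ cut out by the monic polynomial $t^r+\sigma_1 t^{r-1}+\dots+\sigma_r$ in the tautological $F_X^*L$-section $t$. Since this defining polynomial is monic of degree $r$, $Y_R$ is finite (of degree $r$) over $X_R$, hence proper and flat over $\Spec R$. The main step is to construct some $R$-flat coherent $L$-module $F$ on $X_R$ with generic fiber $E_K$. Via Lemma~\ref{equivalence}, $E_K$ corresponds to a coherent $\ti\Lambda_{L,K}$-module $\ti E_K$ on $\VV(F_X^*L)_K$ whose scheme-theoretic support is contained in $Y_K$ by Cayley--Hamilton applied to the $p$-curvature. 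Using properness of $Y_R$ over $\Spec R$, extend $\ti E_K$ to any coherent $\cO_{Y_R}$-module, and replace it by its quotient modulo $R$-torsion to obtain an $R$-flat coherent $\cO_{Y_R}$-extension $\ti F_0$ of $\ti E_K$.

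Since $\ti\Lambda_L$ is locally free of finite rank as a left $\cO_{\VV(F_X^*L)}$-module (\cite[Lemma 4.6]{La}), the tensor product $\ti F_1 := \ti\Lambda_L \otimes_{\cO_{\VV(F_X^*L)}}\ti F_0$ is a coherent left $\ti\Lambda_L$-module whose scheme-theoretic support stays in $Y_R$. At the generic fiber, the $\ti\Lambda_{L,K}$-action on $\ti E_K$ identifies $\ti F_1\otimes_RK$ with $\ti\Lambda_{L,K}\otimes\ti E_K$ and provides a $\ti\Lambda_L$-linear surjection onto $\ti E_K$. The image $\ti F$ of $\ti F_1$ inside $\ti E_K$ is a coherent $\ti\Lambda_L$-submodule that is $R$-torsion-free (hence $R$-flat), has generic fiber $\ti E_K$, and has support inside $Y_R$. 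Setting $F:=\pi_*\ti F$ and invoking Lemma~\ref{equivalence} gives the desired $R$-flat coherent $L$-module extension on $X_R$, whose $p$-Hitchin image is $(\sigma_i)_R$ by construction. Having constructed $F$, Theorem~\ref{slope-Langton} (after first passing, if necessary, to a pure-relative-dimension-$d$ quotient of $F$) produces an $R$-flat coherent $L$-submodule $E\subset F$ with $E_K=F_K$ and $E_k$ Gieseker semistable. The $p$-Hitchin image of $E$ agrees with $(\sigma_i)_R$ on the generic fiber, and hence on all of $\Spec R$ by separatedness of the affine $S$-scheme $\VV^{F_X^*L}(X/S,r)$.

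The main obstacle is the construction of $F$ with its full $L$-structure. Extending a coherent sheaf across the proper scheme $Y_R$ is routine, but extending the noncommutative $\ti\Lambda_L$-action requires the finite local freeness of $\ti\Lambda_L$ over $\cO_{\VV(F_X^*L)}$ — a feature that, combined with the existence of the spectral cover cutting out a region proper over $X_R$, is essentially a positive-characteristic phenomenon. In characteristic zero no such spectral cover exists, which matches the well-known fact that integrable connections on complex varieties do not admit such extensions in general, and explains why the analogue of this properness result is genuinely special to characteristic $p$.
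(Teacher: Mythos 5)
Your proposal is correct and follows essentially the same route as the paper: pass to $\ti\Lambda_L$-modules on $\VV(F_X^*L)$ via Lemma \ref{equivalence}, extend over $\Spec R$ using that $\ti\Lambda_L$ is locally free of finite rank over $\cO_{\VV(F_X^*L)}$ (your $\im(\ti\Lambda_L\otimes\ti F_0\to j_*\ti E_K)$ is exactly the construction of Lemma \ref{extension}), use the spectral scheme determined by the given $R$-point of $\VV^{F_X^*L}(X/S,r)$ to get properness of the support and hence coherence of $F=\pi_*\ti F$, and finish with the Langton-type Theorem \ref{slope-Langton}. The only cosmetic difference is that you build the support constraint (the cover $Y_R$) into the extension step, whereas the paper extends over all of $\VV(F_X^*L)$ first and verifies coherence afterwards.
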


\begin{proof}
	Let us consider a commutative diagram
		$$\xymatrix{
		\Spec \, K \ar[d]\ar[r]&\cM ^{L, ss}(X/S, P)\ar[d]\\
		\Spec \, R \ar@{-->}[ru]\ar[r]&\VV ^{ F_X^*L}(X/S,r).\\
	}$$
We need to show existence of the dashed arrow making the diagram commutative. Taking a base change we can assume that $S=\Spec R$. Then we need to show that for a fixed  semistable  $L_K$-module $M$ on $X_K$ there exists an $R$-flat $\cO_X$-coherent $L$-module $F$ such that $M\simeq F\otimes _RK$ and $F_k$ is Gieseker semistable.
	
	\medskip
	
\emph{Step 1.} Let us show that there exists an $R$-flat  $\cO_X$-quasicoherent $L$-module $F$ 
such that $M\simeq F\otimes _RK$.
By Lemma \ref{equivalence} there exists $\ti \Lambda _{L_K}$-module $\ti M$ on $\VV (F_{X_K}^*L_K)$ such that 
$$(\pi_{X_K})_* \ti M=M,$$
where $\pi : \VV (F_X^* L)\to X$ is the canonical projection and $\pi_{X_K}$ is its restriction to the preimage of $X_K$. 
By Lemma \ref{extension} there exists an  
$\ti \Lambda _{L}$-module $\ti M '$, coherent as an $\cO_{\VV(F_X^*L)}$-module, which extends $\ti M$ via 
an open immersion $\VV(F_{X_K}^*L _K)\hookrightarrow \VV(F_X^*L)$. Then again by  
Lemma \ref{equivalence} we get the required $L$-module $F=\pi _* \ti M'$.

\medskip

\emph{Step 2.} In this step we show that $F$ is coherent as an $\cO_X$-module. 
Let $\ti N$ be the $\cO_{\VV(F_X^*L)}$-module structure on $\ti M'$. By the results of Section \ref{modules-char-p}, $\ti N$ corresponds to the $F_X^*L$-module $\Psi _L (F)$. Let us recall that we have the total spectral scheme $\WW ^L(X/S,r)\subset
\VV(F_X^*L)\times _S \VV ^{F_X^*L}(X/S, r)$, which is finite and flat over
$X\times _S \VV ^{F_X^*L} (X/S, r)$ (see \cite[p.~521]{La}). By construction the support of $\ti N_K$ coincides set-theoretically with the spectral scheme of $\Psi _{F_{X_K}^*L_K}(M)$, so it is contained in the closed subscheme $\WW ^{F_X^* L}(X/S,r)\times _{\VV ^{F_X^*L}(X/S, r)}\Spec R $ of $ \VV(F_X^*L)$ (here we use existence of $\Spec R\to \VV ^{ F_X^*L}(X/S,r)$ making the diagram at the beginning of proof commutative). This subscheme does not intersect the divisor at infinity (when fixing an appropriate relative compactification of $\VV(F_X^*L)$) and it contains the support of $\ti N$. So the support of $\ti N$ is proper over $\Spec R$, which implies that $F$ is coherent as an $\cO_X$-module.

\medskip

\emph{Step 3.} Now the required assertion follows from Theorem \ref{slope-Langton}.
\end{proof}

\medskip

In the proof of the above theorem we used the following lemma.

\begin{Lemma}\label{extension}
Let $X$ be a quasi-compact and quasi-separated scheme and let $j: U\to X$ be 
a quasi-compact open immersion of schemes. Let $\cA$ be a sheaf of 
associative and unital (possibly non-commutative) $\cO_X$-algebras, which is locally free of finite rank as a (right) 
$\cO_X$-module. Let $E$ be a left $\cA _U$-module, which is quasi-coherent of finite type as an $\cO_U$-module.
Then there exists  a left $\cA$-module $G$, which is quasi-coherent of finite type as an $\cO_X$-module
and such that $G_U\simeq E$ as $\cA_U$-modules.
\end{Lemma}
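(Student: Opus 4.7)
The plan is to produce $G$ in two stages: first extend $E$ to a (possibly non--finite type) quasi-coherent $\cA$-module on $X$, and then cut out a finite type $\cA$-submodule from it. The role of the algebra structure is minimal until the very end, when local freeness of $\cA$ ensures that saturating a finite type $\cO_X$-submodule under the $\cA$-action still gives something of finite type.

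For the first stage, I would form the pushforward $j_*E$. Since $j$ is quasi-compact and quasi-separated, $j_*E$ is a quasi-coherent $\cO_X$-module, and $j^*j_*E=E$ canonically. Since $\cA$ is locally free of finite rank as a right $\cO_X$-module, the projection formula gives a canonical identification $\cA\otimes_{\cO_X}j_*E\simeq j_*(\cA_U\otimes_{\cO_U}E)$, through which the $\cA_U$-action $\cA_U\otimes_{\cO_U}E\to E$ pushes forward to an $\cA$-action $\cA\otimes_{\cO_X}j_*E\to j_*E$ extending the given $\cA_U$-action on $E$. Next, applying the standard extension lemma for finite type quasi-coherent sheaves on a quasi-compact quasi-separated scheme (see \cite[Tag 01PF]{St}), I obtain a finite type quasi-coherent $\cO_X$-module $\tilde F$ with $\tilde F|_U\simeq E$. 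Composing the adjunction $\tilde F\to j_*j^*\tilde F=j_*E$ with this isomorphism and passing to the image produces a finite type quasi-coherent $\cO_X$-submodule $F\subset j_*E$ with $F|_U=E$.

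Finally, I would define $G\subset j_*E$ to be the $\cA$-submodule generated by $F$, i.e.\ the image of the restricted action map $\cA\otimes_{\cO_X}F\to j_*E$. Local freeness of finite rank of $\cA$ ensures that $\cA\otimes_{\cO_X}F$ is of finite type as an $\cO_X$-module, hence so is its quotient $G$; and restricting to $U$ gives $G|_U=\im(\cA_U\otimes_{\cO_U}E\to E)=E$ because $E$ already carries the $\cA_U$-module structure. I expect the only real obstacle to be the invocation of the qcqs extension lemma in the middle step, which is the sole nontrivial input; the algebra structure of $\cA$ enters only at the very end, through local freeness, and in a completely formal way.
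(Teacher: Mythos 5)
Your proposal is correct and follows essentially the same route as the paper: extend $E$ to a finite type quasi-coherent $\cO_X$-submodule of $j_*E$ via the qcqs extension lemma \cite[Tag 01PF]{St}, then take the $\cA$-submodule it generates, using local freeness of finite rank of $\cA$ only to see that the generated submodule is still of finite type. The only cosmetic difference is that the paper obtains the $\cA$-action on $j_*E$ by restriction of scalars along $\cA\to j_*(\cA_U)$ rather than via the projection formula, but this changes nothing of substance.
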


\begin{proof}
By \cite[Tag 01PE and {Tag 01PF}]{St} there exists  a quasi-coherent $\cO_X$-submod\-ule  $E'\subset j_*E$ such that $E'|_U=E$ and $E'$ is  of finite type. Note that $j_*E$ is a $j_*(\cA_U)$-module and $j_*(\cA)$
is a $\cA$-module. So we can consider $G:=\cA\cdot E' \subset j_*E$. Clearly, $G$ is an $\cA$-module  
of finite type (as it is the image of $\cA\otimes E'$) and $G_U\simeq E$ as $\cA_U$-modules.
\end{proof}

\begin{Remark}
Theorem \ref{p-Hitchin-properness} was stated in passing in \cite[p.~531, l.~2-3]{La} but without a full proof. 
\end{Remark}

\subsection{Properness of the $p$-Hitchin morphism II}

Assume that $S$ has characteristic $p>0$ and $X/S$ is smooth of relative dimension $d$.
We assume that $L$ is a restricted $\cO_S$-Lie algebroid and we use notation from Subsection \ref{F-twist}. We also fix a polynomial $P$ of degree $d=\dim (X/S)$, corresponding to rank $r$ sheaves.
Then the $p$-curvature gives rise to a morphism of moduli stacks
$$\begin{array}{cccc}
	\Psi_L': & \cM^L(X/S, P)&\to&  \cM^{L'}_{\rm Dol}(X'/S, P')\\
	&(E, \nabla)&\to&(F_{X/S, *}E,\psi '(\nabla)),
\end{array}
$$
where $P'$ is the Hilbert polynomial of the corresponding push-forward by $F_{X/S}$.
As before we can consider 	the composition of  $\Psi _L': \cM^{L, ss} (X/S, P)\to \cM^{L',  \rm{tf}}_{\rm Dol}(X'/S, P')$ with Hitchin's morphism $H_{L'}: \cM^{L',  \rm{tf}}_{\rm Dol}(X'/S, P) \to  \VV^{ L'}(X'/S,p^d r)$. This will be called a
\emph{$p'$-Hitchin morphism}.

Essentially the same proof as that of Theorem \ref{p-Hitchin-properness} gives the following theorem: 

\begin{Theorem}\label{Hodge-Hitchin-properness2}
	Let us fix a polynomial $P$ of degree $d$, corresponding to rank $r$ sheaves.
	The $p'$-Hitchin morphism $H_{L', p'}:\cM^{L, ss} (X/S, P)\to   \VV^{ L'}(X'/S,p^d r)$ is universally closed.
\end{Theorem}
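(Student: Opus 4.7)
The plan is to follow verbatim the pattern of the proof of Theorem \ref{p-Hitchin-properness}, substituting the equivalence of Lemma \ref{equivalence2} (and the ring $\ti\Lambda_L'$ on $\VV(L')$) for the equivalence of Lemma \ref{equivalence} (and the ring $\ti\Lambda_L$ on $\VV(F_X^*L)$). By the valuative criterion for universal closedness it suffices, after base change, to treat the case $S=\Spec R$ for a DVR $R$ with algebraically closed residue field $k$ and quotient field $K$, and to show that any Gieseker semistable $L_K$-module $M$ on $X_K$, together with a section $\Spec R\to \VV^{L'}(X'/S,p^dr)$ extending the characteristic polynomial of $\psi'(\nabla_K)$, admits an $R$-flat $\cO_X$-coherent $L$-module extension $F$ whose special fibre $F_k$ is Gieseker semistable.

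First, apply Lemma \ref{equivalence2} to $M$ to obtain a $\ti\Lambda_{L_K}'$-module $M'_K$ on $\VV(L'_K)$, coherent as an $\cO_{\VV(L'_K)}$-module, with $(\pi'_K)_*M'_K=F_{X_K/K,*}M$. Since by \cite[Theorem 4.7]{La} the sheaf $\ti\Lambda_L'$ is locally free of finite rank as an $\cO_{\VV(L')}$-module, Lemma \ref{extension} applies and produces a $\ti\Lambda_L'$-module $N'$ on $\VV(L')$, quasi-coherent of finite type as an $\cO_{\VV(L')}$-module, with $N'|_{\VV(L'_K)}\simeq M'_K$. Let $F$ be the $L$-module corresponding to $N'$ under Lemma \ref{equivalence2}; by construction $F_K\simeq M$, and after replacing $F$ by $F/\tors_R F$ (which is still an $L$-submodule with unchanged generic fibre) we may assume $F$ is $R$-flat.

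Next, I show $F$ is coherent as an $\cO_X$-module. By Lemma \ref{equivalence2} this amounts to proving that the support of $N'$, viewed in $\VV(L')$, is proper over $X'$, equivalently that its closure in a fixed relative compactification of $\VV(L')$ (for instance $\PP(L'\oplus\cO_{X'})$) avoids the divisor at infinity. Here the data of the morphism $\Spec R\to \VV^{L'}(X'/S,p^dr)$ in the given square comes in: the characteristic polynomial of the $p$-curvature $\psi'(\nabla)$ extends over $\Spec R$ to a fixed section of $\VV^{L'}(X'/S,p^dr)$. Hence the set-theoretic support of $N'$ is contained in the total spectral scheme $\WW^{L'}(X'/S,p^dr)\times_{\VV^{L'}(X'/S,p^dr)}\Spec R\subset \VV(L')$, which by \cite[p.~521]{La} is finite (hence proper) over $X'\times_S\Spec R$. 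Therefore this support does not meet the divisor at infinity, and $F$ is $\cO_X$-coherent. Finally, Theorem \ref{slope-Langton} provides an $R$-flat $L$-submodule $E\subset F$ with $E_K=F_K=M$ and $E_k$ Gieseker semistable, yielding the required dashed arrow in the valuative diagram.

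The main obstacle, exactly as in the proof of Theorem \ref{p-Hitchin-properness}, is the coherence step: one must know that the $p$-curvature spectrum, constrained to lie in the prescribed spectral cover over all of $\Spec R$, keeps the support of the extended spectral sheaf away from the divisor at infinity. Every other ingredient (the equivalence of categories, the Hartogs-type extension of Lemma \ref{extension}, and the Langton-type existence of a semistable reduction) is already in place from the earlier sections.
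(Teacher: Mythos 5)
Your proposal is correct and follows exactly the route the paper intends: the paper's own proof of this statement is literally ``essentially the same proof as that of Theorem \ref{p-Hitchin-properness}'', with Lemma \ref{equivalence2} and $\ti\Lambda_L'$ on $\VV(L')$ replacing Lemma \ref{equivalence} and $\ti\Lambda_L$ on $\VV(F_X^*L)$, the spectral-scheme containment forcing properness of the support, and Theorem \ref{slope-Langton} supplying the semistable reduction. Your explicit passage to $F/\tors_R F$ to secure $R$-flatness (needed so that the support of the extended spectral sheaf is the closure of its generic-fibre support) is a detail the paper leaves implicit, but it is the same argument.
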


We have the following diagram
	$$\xymatrix{
\cM^{L, ss} (X/S, P)\ar[d]^{H_{L', p'}}\ar[r]^{H_{L, p}}& \VV^{ F_{X/S}^* L'}(X/S,r)	\\
 \VV^{ L'}(X'/S,p^d r) & \VV^{ L'}(X'/S,r)\ar[u]^{F_{X/S}^*}\ar[l]^{(\cdot )^{p^d}}.\\
}$$
But there are no natural maps between $\VV^{ F_{X/S}^* L'}(X/S, r)$ and
 $\VV^{ L'}(X'/S,p^d r)$, and in general the maps  $H_{L, p}$ and $H_{L', p'}$ do not factor through $\VV^{ L'}(X'/S,r)$,
 so Theorems \ref{p-Hitchin-properness} and \ref{Hodge-Hitchin-properness2} give different results. 
 However, these maps factor through $\VV^{ L'}(X'/S,r)$ in the following special case where $L=\TT_{X/S}$ is the canonical restricted Lie algebroid associated to $T_{X/S}$ with $\alpha=\id$, the usual Lie bracket for derivations and $(\cdot)^{[p]}$ given by sending $D$ to the derivation acting like the differential operator $ D^p$.
 Let us recall that for this Lie algebroid by \cite[Proposition 2.2.2 and Theorem 2.2.3]{BMR} $\ti \Lambda _L' =F_{X/S, *}\ti \Lambda_L$ is a sheaf of Azumaya $\cO_{\VV (L')}$-algebras and  we have a canonical isomorphism
 $$\varphi: F_{X/S}^{*} F_{X/S, *}\ti \Lambda_L\to \cEnd_{\cO_{\VV ( F^*_{X/S}L')}}\ti \Lambda_L$$	
 of sheaves of rings (note that our conventions of left and right modules are opposite to those in \cite{BMR}). 

 Let $\cA$ be a sheaf of $\cO_Y$-algebras. In the following ${\mathrm {QCoh} \, } _{\mathrm{fp}} (Y, \cA)$ denotes the category of left $\cA$-modules, which are quasicoherent and locally finitely presented as $\cO_Y$-modules. In the formulation of the following theorem we use the above isomorphism to identify $ F_{X/S}^{*} F_{X/S, *}\ti \Lambda_L$-module structure on $ F_{X/S}^{*} F_{X/S, *}\ti M$
 with the corresponding $\cEnd_{\cO_{\VV ( F^*_{X/S}L')}}\ti \Lambda_L$-module structure.
 
 \begin{Theorem}\label{Laszlo-Pauly}
 Let $Y\to T$ be a smooth morphism with $T$ of characteristic $p>0$ and let  $L=\TT_{Y/T}$.
 Then we have equivalences of categories
 $$\ti F_{Y/T}^{*} \ti F_{Y/T, *} : {\mathrm {QCoh} \, } _{\mathrm{fp}} (\VV (F_{Y/T}^*L'), \ti \Lambda _L )\to 
  {\mathrm {QCoh} \, } _{\mathrm{fp}} (\VV (F_{Y/T}^*L'), \cEnd_{\cO_{\VV ( F^*_{Y/T}L')}}\ti \Lambda_L ) $$
and 
 $$ \ti \Lambda_L ^*\otimes _{\cO_{\VV ( F^*_{Y/T}L')}} : {\mathrm {QCoh} \, } _{\mathrm{fp}} (\VV (F_{Y/T}^*L'), \ti \Lambda _L )\to {\mathrm {QCoh} \, } _{\mathrm{fp}} (\VV (F_{Y/T}^*L'), \cEnd_{\cO_{\VV ( F^*_{Y/T}L')}}\ti \Lambda_L ).$$
 Moreover, there exists a natural transformation
 $$\varphi: \ti F_{Y/T}^{*} \ti F_{Y/T, *} \to  \ti \Lambda_L ^*\otimes _{\cO_{\VV ( F^*_{Y/T}L')}},$$ 
 which is an isomorphism of functors.
 \end{Theorem}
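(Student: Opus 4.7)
The proof rests on two ingredients: the BMR isomorphism $\varphi: \ti F_{Y/T}^{*} \ti F_{Y/T, *} \ti\Lambda_L \xrightarrow{\sim} \cEnd_{\cO_{\VV(F^{*}_{Y/T} L')}} \ti\Lambda_L$ recalled just before the statement, which expresses that the Azumaya algebra $\ti\Lambda_L' = \ti F_{Y/T, *} \ti\Lambda_L$ becomes trivialized after Frobenius pullback with $\ti\Lambda_L$ itself as the splitting module, and classical Morita equivalence for trivial Azumaya algebras. First I would check that both functors are well-defined. The $\cEnd_{\cO}\ti\Lambda_L$-structure on the target of the first functor comes directly from $\varphi$: $\ti F^{*}_{Y/T}\ti F_{Y/T,*} M$ is naturally a $\ti F^{*}_{Y/T}\ti F_{Y/T,*}\ti\Lambda_L$-module, which via $\varphi$ becomes a $\cEnd_\cO\ti\Lambda_L$-module. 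For the second functor, since $\ti\Lambda_L$ is locally free of finite positive rank over $\cO_{\VV(F^{*}_{Y/T}L')}$ by \cite[Theorem 4.7]{La}, there is a canonical identification $\cEnd_\cO \ti\Lambda_L \simeq \ti\Lambda_L \otimes_{\cO} \ti\Lambda_L^{*}$ as a $\cEnd_\cO\ti\Lambda_L$-$\ti\Lambda_L$-bimodule: the right $\ti\Lambda_L$-action, induced by the canonical inclusion $\ti\Lambda_L \hookrightarrow \cEnd_\cO\ti\Lambda_L$ by left multiplication, acts on the $\ti\Lambda_L^{*}$ factor via $(\xi \cdot a)(b) = \xi(ab)$.

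Next I would construct the natural isomorphism $\varphi$ between the two functors. The projection formula for the affine morphism $\ti F_{Y/T}$ yields a canonical isomorphism $\ti F^{*}_{Y/T}\ti F_{Y/T,*} M \simeq \ti F^{*}_{Y/T}\ti F_{Y/T,*}\ti\Lambda_L \otimes_{\ti\Lambda_L} M$. Composing with $\varphi$ produces $\cEnd_\cO\ti\Lambda_L \otimes_{\ti\Lambda_L} M$. Using the bimodule decomposition $\cEnd_\cO\ti\Lambda_L \simeq \ti\Lambda_L \otimes_\cO \ti\Lambda_L^{*}$ (with the right $\ti\Lambda_L$-action concentrated on the second factor) and the cancellation $\ti\Lambda_L \otimes_{\ti\Lambda_L} M = M$, one obtains a natural isomorphism $\cEnd_\cO\ti\Lambda_L \otimes_{\ti\Lambda_L} M \simeq \ti\Lambda_L^{*} \otimes_\cO M$ as $\cEnd_\cO\ti\Lambda_L$-modules, which is the required natural transformation at the functor level.

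To show that both functors are equivalences, I would invoke classical Morita equivalence for the trivial Azumaya algebra $\cEnd_\cO\ti\Lambda_L$: since $\ti\Lambda_L$ is locally free of finite positive rank, the category $\QCoh_{\mathrm{fp}}(\VV(F^{*}_{Y/T}L'), \cEnd_\cO\ti\Lambda_L)$ is equivalent to $\QCoh_{\mathrm{fp}}(\VV(F^{*}_{Y/T}L'), \cO)$ via $V \mapsto \ti\Lambda_L \otimes_\cO V$ and $N \mapsto \Hom_{\cEnd_\cO\ti\Lambda_L}(\ti\Lambda_L, N)$. Combined with the equivalence $\QCoh_{\mathrm{fp}}(\VV(F^{*}_{Y/T}L'), \ti\Lambda_L) \simeq \QCoh_{\mathrm{fp}}(\VV(L'), \ti\Lambda_L')$ given by the affine morphism $\ti F_{Y/T, *}$, and with the Azumaya splitting of $\ti\Lambda_L'$ encoded by $\varphi$, one sees that both $\ti F^{*}_{Y/T}\ti F_{Y/T,*}$ and $\ti\Lambda_L^{*} \otimes_\cO (\cdot)$ are equivalences, with a common quasi-inverse given explicitly by $N \mapsto \ti\Lambda_L \otimes_{\cEnd_\cO\ti\Lambda_L} N$. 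The main obstacle lies in the careful bookkeeping of the left and right $\ti\Lambda_L$-module structures in the identification $\cEnd_\cO\ti\Lambda_L \simeq \ti\Lambda_L \otimes_\cO \ti\Lambda_L^{*}$, and in verifying that the $\cEnd_\cO\ti\Lambda_L$-action on $\ti\Lambda_L^{*} \otimes_\cO M$ used to define the second functor coincides with the one transported from $\ti F^{*}_{Y/T}\ti F_{Y/T,*} M$ via the constructed natural isomorphism; once these compatibilities are settled, the result follows formally from BMR and Morita theory.
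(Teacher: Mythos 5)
Your overall architecture agrees with the paper's: the second functor is an equivalence by classical Morita theory, the whole statement reduces to producing a natural isomorphism between the two functors, and that isomorphism ultimately rests on \cite[Proposition 2.2.2]{BMR}. Where you diverge is in how the isomorphism is verified for a general module $\ti M$. The paper induces the natural transformation from the counit $\ti F_{Y/T}^{*}\ti F_{Y/T,*}\ti M\to \ti M$, observes that the statement is local on $Y$ and $T$, writes $\ti M$ as the cokernel of a map $\ti\Lambda_L^{\oplus m}\to\ti\Lambda_L^{\oplus n}$ of free $\ti\Lambda_L$-modules, and uses that both functors preserve this presentation ($\ti F_{Y/T}$ is affine and flat, $\ti\Lambda_L^{*}$ is locally free) to reduce to the free case, where the map is precisely the BMR isomorphism. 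You instead try to prove the isomorphism in one stroke via the projection formula $\ti F^{*}_{Y/T}\ti F_{Y/T,*}\ti M\simeq(\ti F^{*}_{Y/T}\ti F_{Y/T,*}\ti\Lambda_L)\otimes_{\ti\Lambda_L}\ti M$ (which is valid: in affine terms it is associativity of the tensor product) followed by a bimodule computation.

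The gap is in that last computation. You correctly identify the right $\ti\Lambda_L$-action on $\cEnd_{\cO_{\VV(F^{*}_{Y/T}L')}}\ti\Lambda_L\simeq\ti\Lambda_L\otimes_{\cO}\ti\Lambda_L^{*}$ as living on the $\ti\Lambda_L^{*}$ factor, via $(\xi\cdot a)(b)=\xi(ab)$. But then $(\ti\Lambda_L\otimes_{\cO}\ti\Lambda_L^{*})\otimes_{\ti\Lambda_L}\ti M=\ti\Lambda_L\otimes_{\cO}(\ti\Lambda_L^{*}\otimes_{\ti\Lambda_L}\ti M)$: the tensor over $\ti\Lambda_L$ contracts $\ti\Lambda_L^{*}$ against $\ti M$, not the first factor against $\ti M$, so the ``cancellation $\ti\Lambda_L\otimes_{\ti\Lambda_L}\ti M=\ti M$'' is applied to the wrong tensor factor and the claimed identification with $\ti\Lambda_L^{*}\otimes_{\cO}\ti M$ does not follow as written. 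Passing from $\ti\Lambda_L\otimes_{\cO}(\ti\Lambda_L^{*}\otimes_{\ti\Lambda_L}\ti M)$ to $\ti\Lambda_L^{*}\otimes_{\cO}\ti M$ requires a further input (essentially a self-duality of $\ti\Lambda_L$ as a module over itself, or another application of Morita theory), and the left/right conventions here are genuinely treacherous --- the paper itself warns that its conventions are opposite to those of \cite{BMR}. This is a repairable bookkeeping error rather than a conceptual one, but as written the step fails; the paper's reduction to free modules is designed exactly to avoid untangling these bimodule structures for a general $\ti M$.
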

 
 \begin{proof}
 	The fact that $\ti \Lambda_L ^*\otimes _{\cO_{\VV ( F^*_{Y/T}L')}}$ is an equivalence of categories follows from the standard Morita equivalence. So to prove the theorem it is sufficient to show an isomorphism of functors $\varphi$.
The natural transformation $\varphi$ is induced from the fact that $\ti F_{Y/T}^{*} $ is left adjoint to 
$\ti F_{Y/T, *}$. 
Let  $M$ be an $L$-module, which is quasi-coherent and locally finitely presented as an $\cO_Y$-module.
Let $\ti M$ be the $\ti \Lambda_L$-module corresponding to $M$. 
Then the canonical
map $\ti F_{Y/T}^{*} \ti F_{Y/T, *}\ti M\to \ti M$ induces 
$$\varphi: \ti F_{Y/T}^{*} \ti F_{Y/T, *} \ti M\to \ti \Lambda_L ^*\otimes _{\cO_{\VV ( F^*_{Y/T}L')}} \ti M$$	and we need to show that it is an isomorphism of $\cEnd_{\cO_{\VV ( F^*_{Y/T}L')}}\ti \Lambda_L$-modules.	

Note that this statement is local both in $T$ and $Y$, so we can assume that they are both affine and $\tilde M$
can be written as the cokernel of the homomorphism  $\ti \Lambda_L^{\oplus m}\to \ti \Lambda_L^{\oplus n}$
of trivial $\ti \Lambda_L$-modules of finite rank. 
This induces a commutative diagram
	$$\xymatrix{
		\ti F_{Y/T}^{*} \ti F_{Y/T, *} (\ti \Lambda_L^{\oplus m} )\ar[d]^{\simeq}\ar[r]&
		\ti F_{Y/T}^{*} \ti F_{Y/T, *} (\ti \Lambda_L^{\oplus n}) \ar[r]\ar[d]^{\simeq} 
		&\ti F_{Y/T}^{*} \ti F_{Y/T, *}\ti M\ar[r]\ar[d] & 0	\\
\ti \Lambda_L ^*\otimes _{\cO_{\VV ( F^*_{Y/T}L')}} \ti \Lambda_L^{\oplus m} \ar[r]&\ti \Lambda_L ^*\otimes _{\cO_{\VV ( F^*_{Y/T}L')}} \ti \Lambda_L^{\oplus n} \ar[r] &\ti \Lambda_L ^*\otimes _{\cO_{\VV ( F^*_{Y/T}L')}} \ti M\ar[r] & 0	,\\
	}$$
where the two vertical maps are isomorphisms by  \cite[Proposition 2.2.2]{BMR}. This implies 
that the last vertical map is also an isomorphism.
 \end{proof}

\subsection{Properness of the Hodge--Hitchin morphism}

Let us define a restricted Lie algebroid $L=\TT_{X/S, \AA^1}$ on $X\times \AA^1/S\times \AA^1$  
by setting $L:=p_1^*T_{X/S}$ with Lie bracket given by $[\cdot ,
\cdot]_{L}:=p_1^*[\cdot , \cdot]_{\TT _{X/S}}\otimes t $, the anchor map $\alpha :=p_1^*\id \otimes t$
and the $p$-th power operation given by $(\cdot)_L^{[p]}=p_1^*(\cdot)^{[p]}_{\TT _{X/S}}\otimes t^{p-1}$.

Then $\cM^{L, ss} (X/S, P)$ is the Hodge moduli stack, i.e., the moduli stack of semistable modules with $t$-connections, 
and we denote it by $\cM_{Hod} (X/S, P)$. 
 The following result follows easily from Theorem \ref{Laszlo-Pauly}. If $X$ is a smooth projective curve defined over an algebraically closed field of characteristic $p$ this result was proven in \cite[Proposition 3.2]{LP}. 

\begin{Corollary}\label{corrected-commutativity}
If  $L=\TT_{X/S, \AA^1}$ then there exists a morphism 
$$\ti H_{p}:\cM_{Hod} (X/S, P)\to  \VV^{ L'}(X'/S,r)= \VV^{ T_{X'/S}}(X'/S,r)\times \AA^1,$$
called the \emph{Hodge--Hitchin morphism}, making the diagram
	$$\xymatrix{
\cM_{Hod} (X/S, P)\ar[d]^{H_{L', p'}}\ar[r]^{H_{L, p}}\ar[rd]^{\ti H_{L, p}}& \VV^{ F_{X/S}^* L'}(X/S,r)	\\
	\VV^{ L'}(X'/S,p^d r) & \VV^{ L'}(X'/S,r)\ar[u]^{F_{X/S}^*}\ar[l]^{(\cdot )^{p^d}}\\
}$$
commutative.
\end{Corollary}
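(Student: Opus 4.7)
The plan is to apply Theorem \ref{Laszlo-Pauly} (and its natural extension to the twisted Lie algebroid $L=\TT_{X/S,\AA^1}$) to the pair $Y/T=X\times\AA^1/S\times\AA^1$, and then to read off the Hitchin-type factorization from the resulting isomorphism of functors. Given $M=(E,\nabla)\in\cM_{Hod}(X/S,P)$, let $\ti M$ denote the $\ti\Lambda_L$-module on $\VV(F_X^*L)=\VV(F_{X/S}^*L')$ attached to $M$ by Lemma \ref{equivalence}; then $\ti F_{X/S,*}\ti M$ is, by Lemma \ref{equivalence2}, the $\ti\Lambda_L'$-module on $\VV(L')$ corresponding to $(F_{X/S,*}E,\psi'(\nabla))$.

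Applying Theorem \ref{Laszlo-Pauly} yields a natural isomorphism
$$\ti F_{X/S}^*\ti F_{X/S,*}\ti M\;\xrightarrow{\;\sim\;}\;\ti\Lambda_L^*\otimes_{\cO_{\VV(F_X^*L)}}\ti M$$
of $\cEnd\ti\Lambda_L$-modules. Since $\ti\Lambda_L$ is locally free of rank $p^d$ over $\cO_{\VV(F_X^*L)}$, both sides have the same underlying support as $\ti M$, which is the spectral scheme of the $p$-curvature $\psi(\nabla)$ and records $H_{L,p}(E)$. On the other hand, the left-hand side has support $\ti F_{X/S}^{-1}(\mathrm{supp}(\ti F_{X/S,*}\ti M))$. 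Since $\ti F_{X/S}$ is finite flat of degree $p^d$, degrees over the respective bases are preserved under base change, so $\mathrm{supp}(\ti F_{X/S,*}\ti M)$ is a closed subscheme of $\VV(L')$ finite of generic degree $r$ over $X'$. Its characteristic polynomial defines an element $\tau\in\VV^{L'}(X'/S,r)$, and I set $\ti H_p(E):=\tau$.

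The upper triangle then commutes by construction: $F_{X/S}^*\tau$ is the characteristic polynomial of $\mathrm{supp}(\ti M)=\ti F_{X/S}^{-1}(\mathrm{supp}(\ti F_{X/S,*}\ti M))$, which equals $H_{L,p}(E)$. For the lower triangle one needs $\tau^{p^d}=H_{L',p'}(E)$, i.e.\ that the characteristic polynomial of $(F_{X/S,*}E,\psi'(\nabla))$ as a rank $p^d r$ sheaf on $X'$ equals $\tau^{p^d}$. This is forced by the Azumaya structure: by \cite[Theorem 4.7]{La} (with $m=d$), $\ti\Lambda_L'$ is an Azumaya algebra of rank $p^{2d}$ on $\VV(L')$, hence \'etale locally isomorphic to $M_{p^d}(\cO_{\VV(L')})$. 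Under Morita equivalence $\ti F_{X/S,*}\ti M$ corresponds to a rank-one sheaf on the (degree-$r$) spectral scheme, so has rank $p^d$ at each of its generic points, yielding the asserted $p^d$-th power relation for characteristic polynomials.

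The main obstacle is this last multiplicity computation. Concretely, after an \'etale-local splitting of $\ti\Lambda_L'$ as $M_{p^d}(\cO)$, the pushforward $F_{X/S,*}E$ decomposes as $p^d$ copies of a rank-$r$ spectral sheaf whose characteristic polynomial is $\tau$, producing the power relation; alternatively one can argue directly from the projection formula and the displayed isomorphism that eigenvalues of $\psi'(\nabla)$ are preserved while multiplicities are multiplied by $p^d$.
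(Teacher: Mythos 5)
Your overall strategy (compare the two $p$-curvature characteristic polynomials via the Bezrukavnikov--Mirkovi\'c--Rumynin splitting encoded in Theorem \ref{Laszlo-Pauly}) is the same as the paper's, but the execution has two genuine gaps. The first is the construction of $\tau$ itself. You define $\tau$ as ``the characteristic polynomial of $\mathrm{supp}(\ti F_{X/S,*}\ti M)$'' and assert that this support is finite of generic degree $r$ over $X'$. Neither claim survives scrutiny: the support of a module does not record the multiplicities that the characteristic polynomial encodes, and its degree over $X'$ can be anything from $1$ to $p^dr$. For instance, if $\nabla$ has nilpotent (say zero) $p$-curvature, then $\ti M$ and $\ti F_{X/S,*}\ti M$ are supported on the zero sections, of degree $1$ over $X$ and $X'$, while the sections of $\VV^{L'}(X'/S,r)$ one must produce are $(0,\dots,0)$ in degree $r$. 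The same conflation of set-theoretic supports with characteristic polynomials invalidates your verification of the upper triangle (``$F_{X/S}^*\tau$ is the characteristic polynomial of $\mathrm{supp}(\ti M)$, which equals $H_{L,p}(E)$''). The comparison has to be made at the level of modules, not supports: this is exactly what the isomorphism $\ti F_{X/S}^{*}\ti F_{X/S,*}\ti M\simeq \ti\Lambda_L^*\otimes\ti M$ of Theorem \ref{Laszlo-Pauly} is for. The paper uses it to prove that the \emph{outer} square commutes, i.e.\ $(H_{L,p})^{p^d}=F_{X/S}^*\circ H_{L',p'}$ (both sides being characteristic polynomials of honest modules of $\cO$-rank $p^d$ times that of $\ti M$), and then obtains $\ti H_p$ for free because the square relating $\VV^{L'}(X'/S,r)$, $\VV^{F_{X/S}^*L'}(X/S,r)$ and their degree-$p^dr$ counterparts is cartesian. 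You never establish that your $\tau$ is the unique polynomial satisfying both triangle identities; with the cartesian square that uniqueness and existence are automatic once the outer square commutes.

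The second gap is your parenthetical ``(and its natural extension to the twisted Lie algebroid $L=\TT_{X/S,\AA^1}$)''. Theorem \ref{Laszlo-Pauly} is proved only for $L=\TT_{Y/T}$, because it rests on \cite[Proposition 2.2.2 and Theorem 2.2.3]{BMR}, i.e.\ on $\ti\Lambda_L'$ being an Azumaya algebra split by $\ti F_{Y/T}$. Over the fiber $t=0$ of $\AA^1$ the algebroid $\TT_{X/S,\AA^1}$ degenerates to the trivial restricted Lie algebroid, $\ti\Lambda_L'$ becomes commutative of rank $p^{2d}$ and is therefore not Azumaya, so there is no such extension. The paper circumvents this by checking commutativity of the outer square fiberwise over $\AA^1$: directly from the definitions at $t=0$, and by rescaling to $t=1$ (where Theorem \ref{Laszlo-Pauly} genuinely applies) for $t\ne 0$. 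Your proof needs the analogous case division; as written it silently applies an unavailable statement on the locus $t=0$. Your concluding \'etale-local Morita computation of the $p^d$-th power relation for the lower triangle is essentially sound where the Azumaya property holds, but it is only half of the argument.
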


\begin{proof}
We have a cartesian diagram	
		$$\xymatrix{
	\VV^{ L'}(X'/S,r) \ar[d]^{(\cdot )^{p^d}} \ar[r]^{F_{X/S}^*}& \VV^{ F_{X/S}^* L'}(X/S,r)\ar[d]^{(\cdot )^{p^d}} \\
		\VV^{ L'}(X'/S,p^d r) \ar[r]^{F_{X/S}^*}&  	\VV^{ F_{X/S}^* L'}(X/S,p^d r),\\
	}$$
so it is sufficient to show that the diagram 
	$$\xymatrix{
	\cM_{Hod} (X/S, P)\ar[d]^{H_{L', p'}}\ar[r]^{H_{L, p}}& \VV^{ F_{X/S}^* L'}(X/S,r)\ar[d]^{(\cdot )^{p^d}} \\
	\VV^{ L'}(X'/S,p^d r) \ar[r]^{F_{X/S}^*}&  	\VV^{ F,_{X/S}^* L'}(X/S,p^d r),\\
}$$
is commutative. Note that $(H_{L, p})^{p^d}$ is given by sending an $ L_{X_T/T}$-module $M$ to the characteristic polynomial of $\ti \Lambda_L ^*\otimes _{\cO_{\VV ( F^*_{Y/T}L')}} \ti M$. Here we use the proof of \cite[Lemma 4.6]{La}, which shows that over the inverse image of an open subset $U\subset T$ on which $L_T$ is a free $\cO_{X_T}$-module,  $\ti \Lambda_{L}$ is a free $_{\cO_{\VV ( F^*_{Y/T}L')}}$-module of rank $p^d$ . Since $F_{X/S}^*\circ H_{L', p'}$ is given by sending $M$ to the characteristic polynomial of $\ti F_{X_T/T}^{*} \ti F_{X_T/T, *} \ti M$, the  corollary follows from Theorem \ref{Laszlo-Pauly}.
\end{proof}

\begin{Remark}\begin{enumerate}
		\item If $X$ is a smooth projective  variety  defined over an algebraically closed field of characteristic $p$ and one restricts to $t=1$ (the de Rham case) commutativity of the upper triangle in the diagram was mentioned in \cite[2.5]{EG} and attributed to \cite{LP}.
		However, the proof of \cite{LP} uses an assumption that $X$ is a curve. Recently, M. de Cataldo, A. F. Herrero and S. Zhang noticed that even in that case the proof of \cite{LP} needs some additional arguments.
		\item The above corollary generalizes also \cite[Theorem 2.17]{EG}. This theorem  shows existence of the lower triangle in the diagram after restricting to the de Rham and locally free part.
	\end{enumerate}
\end{Remark}

\medskip

For a curve $X$ the following corollary is one of the main theorems of \cite{dCZ}.

\begin{Corollary}\label{Hodge-Hitchin-properness}
	The Hodge--Hitchin morphism
	$$\ti H_{p}:\cM_{Hod} (X/S, P)\to \VV^{ T_{X'/S}}(X'/S,r)\times \AA^1$$
	is of finite type, universally closed and S-complete.
In particular, the Hodge moduli space of relative integrable $t$-connections on $X/S$ with fixed Hilbert polynomial $P$  is proper over $\VV^{ T_{X'/S}}(X'/S,r)\times \AA^1$.
\end{Corollary}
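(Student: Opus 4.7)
The corollary asserts three properties of $\ti H_p$ plus a consequence for the coarse moduli space, and the key observation is that essentially all the needed machinery has been assembled in the previous sections. The plan is to treat each property in turn, leveraging the factorization established in the preceding corollary.

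First, finite type of $\ti H_p$ is immediate. By Theorem \ref{general-moduli} applied to the sheaf of rings of differential operators $\Lambda_L$ with $L=\TT_{X/S,\AA^1}$, the moduli stack $\cM_{Hod}(X/S,P)=\cM^{L,\mathrm{ss}}(X/S,P)$ is of finite type over $S\times\AA^1$; since the Hitchin base $\VV^{T_{X'/S}}(X'/S,r)\times\AA^1$ is also of finite type over $S\times\AA^1$, the morphism $\ti H_p$ inherits this property.

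For universal closedness, I would exploit the commutative triangle from the preceding corollary, namely $H_{L',p'}=(\,\cdot\,)^{p^d}\circ\ti H_p$. Theorem \ref{Hodge-Hitchin-properness2} supplies universal closedness of $H_{L',p'}$. The power map $(\,\cdot\,)^{p^d}:\VV^{L'}(X'/S,r)\to\VV^{L'}(X'/S,p^dr)$ is a morphism between separated $S$-schemes, so it is separated. Hence the graph morphism
$$\Gamma_{\ti H_p}:\cM_{Hod}(X/S,P)\longrightarrow \cM_{Hod}(X/S,P)\times_{\VV^{L'}(X'/S,p^dr)}\bigl(\VV^{T_{X'/S}}(X'/S,r)\times\AA^1\bigr)$$
is a closed immersion, and the projection onto the second factor is a base change of $H_{L',p'}$, hence universally closed. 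Composing these two universally closed maps gives universal closedness of $\ti H_p$.

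S-completeness of the morphism $\ti H_p$ reduces to S-completeness of the stack $\cM_{Hod}(X/S,P)$ over $S\times\AA^1$, which is the relative version of Theorem \ref{S-completness} (the author indicates in Section 3 that the statement holds in the general relative setup of Section \ref{moduli-section}). Since the target is a scheme, hence has trivial inertia, any $\overline{\ST}_R$ test diagram lifts through the stack to the target automatically, so S-completeness of the stack yields S-completeness of the morphism. Finally, the ``in particular'' assertion about the coarse moduli space follows by combining these three properties with the criterion of \cite{AHLH}: once one knows $\cM_{Hod}(X/S,P)$ has a good moduli space (given by Theorem \ref{general-moduli}), S-completeness implies the induced morphism on moduli spaces is separated, and universal closedness plus finite type then upgrade this to properness. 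The main obstacle, insofar as there is one, is the bookkeeping needed to ensure the relative S-completeness statement and the hypotheses of the \cite{AHLH} criterion are actually satisfied in this setting; the core geometric content is already handled by Theorems \ref{Hodge-Hitchin-properness2} and \ref{S-completness}.
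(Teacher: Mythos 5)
Your proof is correct and follows essentially the same route as the paper: finite type from boundedness, universal closedness by cancelling against a universally closed $p$-Hitchin-type morphism through the factorization in the preceding corollary, S-completeness from Theorem \ref{S-completness}, and the statement about the moduli space as a formal consequence. The only (immaterial) difference is that the paper cancels against $H_{L,p}$ via Theorem \ref{p-Hitchin-properness} rather than against $H_{L',p'}$ via Theorem \ref{Hodge-Hitchin-properness2}, and attributes finite type of the stack directly to the boundedness result of \cite{La0} rather than to Theorem \ref{general-moduli}.
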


\begin{proof}
The fact that the moduli  stack is of finite type follows from \cite{La0}. The fact that it is universally closed follows from the definition and Theorem \ref{p-Hitchin-properness}. S-completness follows from  Theorem \ref{S-completness}. The second part of the theorem follows from the first one.	
\end{proof}

\section*{Acknowledgements}
The author would like to thank Jochen Heinloth for very useful and helpful explanations of \cite{AHLH} and Mark Andrea de Cataldo for asking questions that forced the author to give a proof of Corollary \ref{Hodge-Hitchin-properness} and for point out an inaccuracy in the first proof of Corollary \ref{corrected-commutativity}. The author would also like to thank Daniel Greb and Matei Toma for pointing out \cite{GT2}. The author was partially supported by Polish National Centre (NCN) contract numbers
2018/29/B/ST1/01232.

\footnotesize


\begin{thebibliography}{AHLH}

\bibitem {AHLH} Alper, Jarod; Halpern-Leistner, Daniel; Heinloth, Jochen  Existence of moduli spaces for algebraic stacks. {\tt arXiv:1812.01128}, preprint (2018). 


\bibitem {At} Atiyah, Michael Francis On the Krull-Schmidt theorem with application to sheaves.
\emph{Bull. Soc. Math. France} {\bf 84} (1956), 307--317.

\bibitem {BMR}
Bezrukavnikov, Roman; Mirkovi\'c, Ivan; Rumynin, Dmitriy Localization of modules
for a semisimple Lie algebra in prime characteristic. With an
appendix by Bezrukavnikov and Simon Riche, \emph{Ann. of Math.
	(2)} {\bf 167} (2008), 945--991.


\bibitem {BX} Blum, Harold; Xu, Chenyang Uniqueness of K-polystable degenerations of Fano varieties. 
\emph{Ann. of Math. (2)} {\bf 190} (2019), 609--656.



\bibitem {dCZ}
de Cataldo, Mark Andrea A.; Zhang, Siqing  Projective completion of moduli of t-connections on curves in positive and mixed characteristic. \emph{Adv. Math.} {\bf 401} (2022), Paper No. 108329, 43 pp.

\bibitem {EG}  Esnault, H\'el\`ene; Groechenig, Michael Rigid connections and $F$-isocrystals. \emph{Acta Math.} {\bf 225} (2020), 103--158.

\bibitem {Fa}  Faltings, Gerd Moduli-stacks for bundles on semistable curves. \emph{Math. Ann.} {\bf 304} (1996), 489-515.


\bibitem {FvdK}  Franjou, Vincent; van der Kallen, Wilberd 
Power reductivity over an arbitrary base. 
\emph{Doc. Math.} (2010), Extra vol.: Andrei A. Suslin sixtieth birthday, 171--195.

\bibitem {GT} Greb, Daniel; Toma, Matei 
Compact moduli spaces for slope-semistable sheaves. 
\emph{Algebr. Geom.} {\bf 4} (2017), 40--78. 

\bibitem{GT2}
Greb, Daniel; Toma, Matei Moduli spaces of sheaves that are semistable with respect to a K\"ahler polarisation. \emph{J. \'Ec. polytech. Math.} {\bf 7} (2020), 233--261.

\bibitem {Gr} Groechenig, Michael 
Moduli of flat connections in positive characteristic. 
\emph{Math. Res. Lett.} {\bf 23} (2016), 989--1047.

\bibitem {HL}  Huybrechts, Daniel; Lehn, Manfred The geometry of moduli spaces of sheaves. Second edition. \emph{Cambridge Mathematical Library.} Cambridge University Press, Cambridge, 2010. xviii+325 pp.

\bibitem {Ka}  Katz, Nicholas M. Exponential sums and differential equations. \emph{Annals of Mathematics Studies} {\bf 124}. Princeton University Press, Princeton, NJ, 1990. xii+430 pp.

\bibitem {La0} Langer, Adrian
Semistable sheaves in positive characteristic. \emph{Ann. of Math.} {\bf 159} (2004), 251--276.

\bibitem {La} Langer, Adrian Semistable modules over Lie algebroids in positive characteristic. \emph{Doc. Math.} {\bf 19} (2014), 509--540.

\bibitem {Lt}  Langton, Stacy G. Valuative criteria for families of vector bundles on algebraic varieties.
\emph{Ann. of Math. (2)} {\bf 101} (1975), 88--110.

\bibitem {LP} Y. Laszlo, Ch. Pauly, On the Hitchin morphism in positive characteristic.
\emph{Internat. Math. Res. Notices} {\bf 3} (2001), 129--143.

\bibitem {Ma}  Maruyama, Masaki Moduli spaces of stable sheaves on schemes. Restriction theorems, boundedness and the GIT construction. With the collaboration of T. Abe and M. Inaba. With a foreword by Shigeru Mukai. \emph{MSJ Memoirs} {\bf 33} Mathematical Society of Japan, Tokyo, 2016. xi+154 pp.

\bibitem {Ol} Olsson, Martin Algebraic spaces and stacks. \emph{American Mathematical Society Colloquium Publications} {\bf 62}. American Mathematical Society, Providence, RI, 2016. xi+298 pp. 

\bibitem {Se} Seshadri, C. S. Geometric reductivity over arbitrary base.
\emph{Advances in Math.} {\bf 26} (1977), 225--274. 

\bibitem {Si2} Simpson, Carlos Moduli of representations of the
fundamental group of a smooth projective variety. II, \emph{Inst.
	Hautes \'Etudes Sci. Publ. Math.} No. {\bf 80} (1994), 5--79
(1995).


\bibitem {St} The Stacks project authors,  \emph{The Stacks project}. {\tt https:/\!\!/stacks.math.columbia.edu}, 2021.

\bibitem {vdK}  van der Kallen, Wilberd Reductivity properties over an affine base. \emph{Indag. Math. (N.S.)}, Special issue to the memory of T. A. Springer, {\tt https:/\!\!/doi.org/10.1016/j.indag.2020.09.009} 

\end{thebibliography}
\end{document}